\newcounter{lemma}[section]
\newcounter{corollary}[section]
\newcounter{remark}[section]
\newcounter{theorem}[section]
\newcounter{proposition}[section]
\newcounter{example}
\numberwithin{equation}{section}
\begin{document}

\markboth{\centerline{E.~SEVOST'YANOV}} {\centerline{ON EXTENSION OF
MAPPINGS }}

\def\cc{\setcounter{equation}{0}
\setcounter{figure}{0}\setcounter{table}{0}}

\overfullrule=0pt


\author{{E.~SEVOST'YANOV}\\}

\title{
{\bf ON THE INVERSE POLETSKII INEQUALITY IN METRIC SPACES AND PRIME
ENDS}}

\date{\today}
\maketitle

\begin{abstract} We study mappings defined in the domain of a metric
space that distort the modulus of families of paths by the type of
the inverse Poletskii inequality. Under certain conditions, it is
proved that such mappings have a continuous extension to the
boundary of the domain in terms of prime ends. Under some additional
conditions, the families of such mappings are equicontinuous in the
closure of the domain with respect to the space of prime ends.
\end{abstract}

\bigskip
{\bf 2010 Mathematics Subject Classification: Primary 30L10;
Secondary 30C65}

\section{Introduction} The paper is devoted to the study of mappings in metric spaces,
in particular, their extension to the boundary of the domain in
terms of prime ends (see, for example, \cite{A}, \cite{ABBS},
\cite{GRY}, \cite{KLS} and \cite{KR}). In our relatively recent
paper~\cite{Sev$_1$}, we investigated the question of the continuous
extension of mappings satisfying Poletskii's inequality in metric
spaces. The key point of the above article is the connection between
the cuts of the domain and the families of curves joining the
conjentric spheres, which, in turn, is possible due to~\cite[Theorem
10.8]{ABBS}. This article deals with mappings whose inverse
satisfies modular inequalities. This issue was partially considered
in~\cite{Sev$_1$}, see Theorems~4.1 and~4.2. However, we will
consider here an even more general case when the mapping, generally
speaking, does not have the inverse, but only satisfies a modular
estimate of a certain form (which coincides with Poletskii's
inequality for inverse mappings, if they exist). For this purpose,
consider the following definition.

\medskip
Everywhere further $(X, d, \mu)$ and $\left(X^{\,\prime},
d^{\,\prime}, \mu^{\,\prime}\right)$ are metric spaces with metrics
$d$ and $d^{\,\prime}$ and locally finite Borel measures $\mu$ and
$\mu^{\,\prime},$ correspondingly. We will assume that $\mu$ is a
Borel measure such that $0 <\mu(B)<\infty$ for all balls $B$ in $X.$
Let $y_0\in X^{\,\prime},$ $0<r_1<r_2<\infty$ and
\begin{equation}\label{eq1**}
A(y_0, r_1,r_2)=\left\{ y\,\in\,X^{\,\prime}: r_1<d^{\,\prime}(y,
y_0)<r_2\right\}\,,\end{equation}
$$B(y_0, r)=\{y\in X^{\,\prime}: d^{\,\prime}(y,
y_0)<r\}\,, S(y_0, r)=\{y\in X^{\,\prime}: d^{\,\prime}(y,
y_0)=r\}\,.$$
Given sets $E,$ $F\subset\overline{{\Bbb R}^n}$ and a domain
$D\subset X^{\,\prime}$ denote by $\Gamma(E,F,D)$ the family of all
paths $\gamma:[a,b]\rightarrow X^{\,\prime}$ such that $\gamma(a)\in
E,\gamma(b)\in\,F$ and $\gamma(t)\in D$ for $t \in [a, b].$ Given a
domain $D\subset X,$ a {\it mapping} $f:D\rightarrow X^{\,\prime}$
is an arbitrary continuous transformation $x\mapsto f(x).$ Let
$f:D\rightarrow X^{\,\prime},$ let $y_0\in f(D)$ and let
$0<r_1<r_2<d_0=\sup\limits_{y\in f(D)}d^{\,\prime}(y, y_0).$ Now, we
denote by $\Gamma_f(y_0, r_1, r_2)$ the family of all paths $\gamma$
in $D$ such that $f(\gamma)\in \Gamma(S(y_0, r_1), S(y_0, r_2),
A(y_0,r_1,r_2)).$

\medskip
Recall, for a given continuous path $\gamma:[a, b]\rightarrow X$ in
a metric space $(X, d),$ that its length is the supremum of the sums
$$
\sum\limits^{k}_{i=1} d(\gamma(t_i),\gamma(t_{i-1}))
$$
over all partitions $a=t_0\leqslant t_1\leqslant\ldots\leqslant
t_k=b$ of the interval $ [a,b].$ The path $\gamma$ is called {\it
rectifiable} if its length is finite.
\medskip

\medskip
Given a family of paths $\Gamma$ in $X$, a Borel function
$\rho:X\rightarrow[0,\infty]$ is called {\it admissible} for
$\Gamma$, abbr. $\rho\in {\rm adm}\,\Gamma$, if
\begin{equation}\label{eq13.2}
\int\limits_{\gamma}\rho\,ds \geqslant 1
\end{equation}
for all (locally rectifiable) $\gamma\in\Gamma$.
\medskip

\medskip
Given $p\geqslant 1,$ the $p$-modulus of the family $\Gamma$ is the
number
\begin{equation}\label{eq13.5}
M_p(\Gamma)=\inf\limits_{\rho\in {\rm
adm}\,\Gamma}\int\limits_X\rho^{\,p}(x)\, d\mu(x)\,.
\end{equation}
Should ${\rm adm\,}\Gamma$ be empty, we set $M_p(\Gamma)=\infty.$ A
family of paths $\Gamma_1$ in $X$ is said to be {\it minorized} by a
family of paths $\Gamma_2$ in $X,$ abbr. $\Gamma_1>\Gamma_2,$ if,
for every path $\gamma_1\in\Gamma_1$, there is a path
$\gamma_2\in\Gamma_1$ such that $\gamma_2$ is a restriction of
$\gamma_1.$ In this case,
\begin{equation}\label{eq32*A}
\Gamma_1
> \Gamma_2 \quad \Rightarrow \quad M_p(\Gamma_1)\le M_p(\Gamma_2)
\end{equation} (see~\cite[Theorem~1]{Fu}).
Let $Q:X^{\,\prime}\rightarrow [0, \infty]$ be a Lebesgue measurable
function such that $Q(y)\equiv 0$ for $y\in X^{\,\prime}\setminus
D.$ Assume that $D$ and $X^{\,\prime}$ have a finite Hausdorff
dimensions $\alpha$ and $\alpha^{\,\prime}\geqslant 1,$
respectively. We will say that {\it $f$ satisfies the inverse
Poletsky inequality} at a point $y_0\in f(D),$ if the relation
\begin{equation}\label{eq2*A}
M_{\alpha}(\Gamma_f(y_0, r_1, r_2))\leqslant \int\limits_{f(D)\cap
A(y_0,r_1,r_2)} Q(y)\cdot \eta^{{\alpha}^{\prime}}(d^{\,\prime}(y,
y_0))\, d\mu^{\,\prime}(y)
\end{equation}
holds for any Lebesgue measurable function $\eta:
(r_1,r_2)\rightarrow [0,\infty ]$ such that
\begin{equation}\label{eqA2}
\int\limits_{r_1}^{r_2}\eta(r)\, dr\geqslant 1\,.
\end{equation}
Note that the inequalities~(\ref{eq2*A}) are well known in the
theory of quasiregular mappings and hold for $Q=N(f, D)\cdot K, $
where $N(f, D)$ is the maximal multiplicity of $f$ in $D,$ and
$K\geqslant 1$ is some constant that may be calculated in the
following way: $K={\rm ess \sup}\, K_O(x, f),$ $K_O(x, f)=\Vert
f^{\,\prime}(x)\Vert^n/|J(x, f)|$ for $J(x, f)\ne 0;$ $K_O(x, f)=1$
for $f^{\,\prime}(x)=0,$ and $K_O(x, f)=\infty$ for
$f^{\,\prime}(x)\ne 0,$ where $J(x, f)=0$ (see, e.g.,
\cite[Theorem~3.2]{MRV} or \cite[Theorem~6.7.II]{Ri}).

\medskip
Let $X$ and $X^{\,\prime}$ be metric spaces. A mapping
$f:X\rightarrow X^{\,\prime}$ is {\it discrete} if $f^{\,-1}(y)$ is
discrete for all $y\in X^{\,\prime}$ and $f$ is {\it open} if $f$
maps open sets onto open sets. A mapping $f:G\rightarrow
X^{\,\prime}$ is {\it closed} in $G\subset X$ if $f(A)$ is closed in
$f(G)$ whenever $A$ closed in $G.$ From now on we assume that the
space $X^{\,\prime}$ is complete and supports a
$\alpha^{\,\prime}$-Poincar\'{e} inequality, and that the measure
$µ^{\,\prime}$ is doubling (see \cite{ABBS}). In this case, a space
$X^{\,\prime}$ is locally connected (see \cite[Section~2]{ABBS}),
and proper (see \cite[Proposition~3.1]{BB}).

\medskip
The definition and construction of prime ends used in this paper
corresponds to publications~\cite{ABBS} and~\cite{Sev$_1$}. We call
a bounded connected set $E \varsubsetneq \Omega $ an {\it
acceptable} set if $\overline{E}\cap\partial\Omega\ne\varnothing.$
By discussion in \cite{ABBS}, we know that boundedness and
connectedness of an acceptable set $E$ implies that $\overline{E}$
is compact and connected. Furthermore, $E$ is infinite, as otherwise
we would have $\overline{E}=E\subset \Omega.$ Therefore,
$\overline{E}$ is a continuum. Recall that a {\it continuum} is a
connected compact set containing at least two points.

\medskip
In what follows, given $A, B\subset X,$
$${\rm dist}\,(A, B)=\sup\limits_{x\in A, y\in B}d(x, y)\,.$$
We call a sequence $\left\{E_k\right\}_{k=1}^{\infty}$ of acceptable
sets a chain if it satisfies the following conditions:

1. $E_{k+1}\subset E_k$ for all $k=1, 2,\ldots,$

2. ${\rm dist}\,(\Omega\cap \partial E_{k+1}, \Omega\cap \partial
E_k)>0$ for all $k=1, 2,\ldots,$

3. The impression
$\bigcap\limits_{k=1}^{\infty}\overline{E_k}\subset \partial
\Omega.$

Here we have used a notation ${\rm dist}(A, B):=\inf\limits_{x\in A,
y\in B}d(x, y),$ where $d$ is a metric in a given metric space.
We say that a chain $\left\{E_k\right\}_{k=1}^{\infty}$ divides the
chain $\left\{F_k\right\}_{k=1}^{\infty}$ if for each $k$ there
exists $l_k$ such that $E_{l_k}\subset F_k.$ Two chains are
equivalent if they divide each other. A collection of all mutually
equivalent chains is called an {\it end} and denoted $[E_k],$ where
$\left\{E_k\right\}_{k=1}^{\infty}$ is any of the chains in the
equivalence class. The impression of $[E_k],$ denoted $I[E_k],$ is
defined as the impression of any representative chain. The
collection of all ends is called the {\it end boundary} and is
denoted $\partial_E\Omega.$ We say that an end $[E_k]$ is a {\it
prime end} if it is not divisible by any other end. The collection
of all prime ends is called the {\it prime end boundary} and is
denoted $E_{\Omega}.$

\medskip
In what follows, we set $\overline{\Omega}_P:=\Omega\cup
E_{\Omega}.$ We say that $\Omega$ is {\it finitely connected} at a
point $x_0\in\partial \Omega$ if for every $r>0$ there is an open
set $G$ (open in $X$) such that $x_0\in G \subset B(x_0, r)$ and
$G\cap\Omega$ has only finitely many components. If $\Omega$ is
finitely connected at every boundary point, then it is called {\it
finitely connected} at the boundary. The following results have been
proved in~\cite{ABBS}.

\medskip
\begin{proposition}\label{pr3}
{\sl Assume that $\Omega$ is finitely connected at the boundary.
Then all prime ends have singleton impressions, and every $x\in
\partial\Omega$ is the impression of a prime end and is accessible (see~\cite[Theorem~10.8]{ABBS}). }
\end{proposition}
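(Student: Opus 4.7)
\proof The plan is to follow the strategy of \cite[Theorem~10.8]{ABBS}, establishing the three assertions in turn: first the singleton impression property, then the existence of a prime end above every boundary point, and finally accessibility.

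For the singleton impression property, I would argue by contradiction. Suppose $[E_k]$ is a prime end whose impression $I[E_k]$ contains two distinct points $x_0,y_0\in\partial\Omega$. Choose $r$ with $0<r<d(x_0,y_0)/3$ and use finite connectedness at $x_0$ to obtain an open set $G$ with $x_0\in G\subset B(x_0,r)$ such that $G\cap\Omega$ has only finitely many components $V_1,\ldots,V_N$. Since $x_0\in\overline{E_k}$ for every $k$, the intersections $E_k\cap G\cap\Omega$ are nonempty for large $k$; by finiteness of $N$, some fixed component $V_{j_0}$ meets $E_k$ for infinitely many $k$. Extracting connected subsets $E^{\,\prime}_k\subset E_k\cap V_{j_0}$ whose closures still contain $x_0$, one verifies (after passing to a subsequence) that $\{E^{\,\prime}_k\}$ is a chain, and that the end it defines strictly divides $[E_k]$ since its impression lies in $\overline{V_{j_0}}\subset\overline{B(x_0,r)}$ and therefore omits $y_0$. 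This contradicts primality.

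For the existence of a prime end above a given $x\in\partial\Omega$, I would fix a sequence $r_k\downarrow 0$ and use finite connectedness to produce open sets $G_k$ with $x\in G_k\subset B(x,r_k)$ and $G_k\cap\Omega$ consisting of finitely many components. Selecting, for each $k$, a component $U_k$ whose closure contains $x$ and nesting by a diagonal procedure, one shrinks the $r_k$ sufficiently fast so that $\overline{U_{k+1}}$ is compactly contained in $G_k$. This yields condition~2 of the chain definition, namely $\mathrm{dist}(\Omega\cap\partial U_{k+1},\Omega\cap\partial U_k)>0$, since the compact set $\overline{U_{k+1}}$ is separated from the relatively closed set $\Omega\cap\partial U_k\subset\Omega\setminus G_k$. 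The impression is then $\bigcap_k\overline{U_k}=\{x\}$, and primality follows because any end dividing $[U_k]$ has impression contained in $\{x\}$ and hence coincides with $[U_k]$. Accessibility is then immediate: pick $z_k\in U_k$, join $z_k$ to $z_{k+1}$ by a path inside $U_k$ (which is open and connected in the locally connected space $X^{\,\prime}$), and concatenate these paths to obtain a curve in $\Omega$ converging to $x$.

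The principal obstacle is verifying condition~2 in the construction of the chain: arranging the nested components $U_k$ so that their $\Omega$-boundaries stay a positive distance apart. This is precisely where finite connectedness is essentially used, through the requirement that only finitely many components of $G_k\cap\Omega$ accumulate at $x$, allowing a uniform separation via the compactness of $\overline{U_{k+1}}$ together with a careful choice of the radii $r_k$.
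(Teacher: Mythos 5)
The paper itself contains no proof of this proposition: it is imported verbatim from \cite[Theorem~10.8]{ABBS} (``The following results have been proved in \cite{ABBS}''), so there is nothing in the text to compare your argument against line by line. Judged on its own, your outline follows the natural strategy, and the second half is largely sound: the selection of nested components $U_k$ (which does require the finite-union observation that $x\in\overline{G_k\cap\Omega}=\bigcup_{j}\overline{V_j}$ forces $x\in\overline{V_j}$ for some $j$, plus a K\"onig-type choice to keep the $U_k$ nested), the verification of condition~2 via $\overline{U_{k+1}}\subset G_k$ together with properness of $X^{\,\prime}$, and the accessibility argument by concatenating paths in the $U_k$ are all correct in substance.

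Two steps, however, are genuinely broken. First, in the singleton-impression argument, ``extracting connected subsets $E^{\,\prime}_k\subset E_k\cap V_{j_0}$ whose closures still contain $x_0$'' is precisely the point that needs proof, and it can fail as stated: $E_k\cap V_{j_0}$ is an open set which may have infinitely many components, and a point can lie in the closure of such a union without lying in the closure of any single component (already $\bigcup_{n}(1/(n+1),1/n)$ with $x_0=0$ in ${\Bbb R}$ shows this). The pigeonhole that rescues you at the level of the finitely many $V_j$ is simply not available one level down, and you also never address why the chosen pieces are nested ($E^{\,\prime}_{k+1}\subset E^{\,\prime}_k$ does not follow from $E_{k+1}\subset E_k$ once one passes to components) nor why condition~2 holds for the new chain. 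Second, your justification of primality of $[U_k]$ --- ``any end dividing $[U_k]$ has impression contained in $\{x\}$ and hence coincides with $[U_k]$'' --- rests on a false principle: distinct prime ends can share the same impression (the two sides of a slit), which is the very reason prime ends are introduced. The conclusion is true but must be argued differently; for instance, if $[F_j]$ divides $[U_k]$ but $U_m\not\subset F_{j_0}$ for every $m$, then for large $m$ the connected set $U_m$ contains some $F_{j_m}\subset F_{j_0+1}$ and also meets $\Omega\setminus F_{j_0}$, hence meets both $\Omega\cap\partial F_{j_0}$ and $\Omega\cap\partial F_{j_0+1}$, so its diameter is at least ${\rm dist}\,(\Omega\cap\partial F_{j_0+1},\Omega\cap\partial F_{j_0})>0$, contradicting $U_m\subset B(x,r_m)$ with $r_m\rightarrow 0$. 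Until the extraction step in the first part is repaired (this is where the real work of \cite[Theorem~10.8]{ABBS} lies) and the primality argument is replaced by one of this kind, the proposal does not constitute a proof.
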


\medskip
We say that a sequence of points $\{x_n\}_{n=1}^{\infty}$ in
$\Omega$ converges to the end $[E_k],$ and write $x_n\rightarrow
[E_k]$ as $n\rightarrow\infty,$ if for all $k$ there exists $n_k$
such that $x_n\in E_k$ whenever $n\geqslant n_k.$ The following most
important statement is true.

\medskip
\begin{proposition}\label{pr2}
{\sl Assume that $\Omega$ is finitely connected at the boundary.
Then the prime end closure $\overline{\Omega}_P$ is metrizable with
some metric $m_P:\overline{\Omega}_P\times
\overline{\Omega}_P\rightarrow {\Bbb R}$ such that the topology on
$\overline{\Omega}_P$ given by this metric is equivalent to the
topology given by the sequential convergence discussed above
(see~\cite[Corollary~10.9]{ABBS}).}
\end{proposition}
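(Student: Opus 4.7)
The plan is to install on $\overline{\Omega}_P$ a topology that realizes the prescribed sequential convergence and then invoke Urysohn's metrization theorem. I would take as neighborhood basis at each $x\in\Omega$ the usual metric balls $B(x,r)$, and at each prime end $\xi=[E_k]$ the decreasing family
$$U_k(\xi)=E_k\cup\bigl\{[F_j]\in E_\Omega:\,F_{j_0}\subset E_k\text{ for some }j_0\bigr\},\qquad k=1,2,\dots.$$
These bases are countable, so the resulting topology is first countable, and the definition of $x_n\to[E_k]$ (i.e.\ $x_n\in E_k$ eventually) coincides with sequential convergence attached to this topology.

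Next I would verify the three hypotheses of Urysohn: Hausdorff, regular, and second countable. For Hausdorff separation of two distinct prime ends $\xi=[E_k]$ and $\eta=[F_k]$, neither chain divides the other, so for some $k$ no tail of $(F_j)$ lies inside $E_k$ and symmetrically; combining this with the chain condition ${\rm dist}(\Omega\cap\partial E_{k+1},\Omega\cap\partial E_k)>0$ I would produce disjoint basic neighborhoods of the form $U_k(\xi)$ and $U_\ell(\eta)$. Regularity would follow from the same gap condition: around $U_{k+1}(\xi)$ a closed thickening still sits inside $U_k(\xi)$. For second countability I would start from a countable dense set in $\Omega$ with rational radii (available since $X^{\,\prime}$ is proper and hence separable on bounded sets), then augment by a countable cofinal family of chain tails, using finite connectedness at the boundary to keep the number of relevant local components finite at each scale, and verify that finite intersections form a countable base.

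Urysohn's theorem then yields a compatible metric $m_P$; by construction its topology coincides with the one defined above, which in turn realizes the sequential convergence in the statement. Proposition~\ref{pr3} enters as a consistency check: singleton impressions prevent pathological degenerations of prime ends near a single boundary point and allow one to transfer the gap condition on boundaries of $E_k$ in $\Omega$ into separation on $\overline{\Omega}_P$.

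The main obstacle is the Hausdorff property. Proposition~\ref{pr3} guarantees only singleton impressions, so two distinct prime ends can still share the same impression point on $\partial\Omega$; the separation must then come from the internal combinatorial structure of chains rather than from their images. The crucial tool is the positive-distance condition~(2) in the chain definition coupled with the indivisibility built into the notion of a prime end, and this is precisely where finite connectedness at the boundary is used decisively to bound the number of local components that could otherwise allow two inequivalent chains to interleave indefinitely.
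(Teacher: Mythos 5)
The paper offers no proof of this statement at all: it is quoted verbatim from \cite[Corollary~10.9]{ABBS}, where the argument goes by a different route than yours --- one identifies $\overline{\Omega}_P$ with the completion of $\Omega$ under the Mazurkiewicz (inner diameter) metric $d_M(x,y)=\inf\{\operatorname{diam}E:\ x,y\in E,\ E\subset\Omega\ \text{connected}\}$, so that metrizability is automatic and the real work is the homeomorphism, which rests on the structure theorem for prime ends of finitely connected domains (the paper's Proposition~\ref{pr3}). Your plan --- build the sequential topology from neighborhood bases and invoke Urysohn --- is a legitimate alternative strategy in outline, but as written it has genuine gaps precisely at the points where the theorem is hard.

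The most serious gap is the Hausdorff step. From the fact that neither of two distinct prime ends $[E_k]$, $[F_j]$ divides the other you get an index $k$ such that no $F_j$ is \emph{contained} in $E_k$; this does not give $F_j\cap E_k=\varnothing$, and without disjointness of the underlying sets the neighborhoods $U_k(\xi)$ and $U_\ell(\eta)$ are not disjoint. The dangerous case is exactly two inequivalent chains shrinking to the same impression point $y_0$ while each $F_j$ meets both $E_k$ and its complement (``interleaving''). You name this obstacle in your last paragraph but do not resolve it; the resolution requires actually deploying finite connectedness at $y_0$ --- the finitely many components of $G\cap\Omega$ in a small neighborhood $G$ of $y_0$ force each chain eventually into a single component, after which connectedness plus condition~(2) of the chain definition yields eventual disjointness. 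Note also that the paper's own proof of Theorem~\ref{th1A} derives the disjointness $E_k\cap G_k=\varnothing$ \emph{from} the metric $m_P$ of this proposition, so you cannot borrow that argument without circularity. A second, smaller gap is second countability of the prime end part of the space: ``a countable cofinal family of chain tails'' must be exhibited, and the natural candidate (components of $G\cap\Omega$ for $G$ ranging over balls with centers in a countable dense subset of $\partial\Omega$ and rational radii) again presupposes the correspondence between prime ends and such component chains, i.e.\ the content of \cite[Theorem~10.8]{ABBS}. Finally, regularity requires computing closures of the sets $U_{k+1}(\xi)$ in the as-yet-unmetrized topology, which you assert rather than verify. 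In short, Proposition~\ref{pr3} is not a ``consistency check'' in your argument; it, together with a quantitative use of finite connectedness, is the engine, and the proposal does not yet run that engine.
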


\medskip
Let us give the following definition (see~\cite[section~13.3]{MRSY},
cf.~\cite[Definition~17.5(4)]{Va} and \cite[Definition~2.8]{Na}).
Let $(X,d,\mu)$ be metric space with finite Hausdorff dimension
$\alpha\geqslant 1.$ We say that the boundary of $D$ is {\it weakly
flat} at a point $x_0\in
\partial D$ if, for every number $P > 0$ and every neighborhood $U$
of the point $x_0,$ there is a neighborhood $V\subset U$ such that
$M_{\alpha}(\Gamma(E, F, D))\geqslant  P$ for all continua $E$ and
$F$ in $D$ intersecting $\partial U$ and $\partial V.$ We say that
the boundary $\partial D$ is weakly flat if the corresponding
property holds at every point of the boundary.

\medskip
One of the main statements of the article is the following theorem.

\medskip
\begin{theorem}\label{th1A}
{\sl Let $D$ and $D^{\,\prime}$ be domains with finite Hausdorff
dimensions $\alpha$ and $\alpha^{\,\prime}\geqslant 2$ in spaces
$(X,d,\mu)$ and $(X^{\,\prime},d^{\,\prime}, \mu^{\,\prime}),$
respectively. Assume that $X$ is locally connected, $\overline{D}$
is compact, $X^{\,\prime}$ is complete and supports
$\alpha^{\,\prime}$-Poincar\'{e} inequality, and that the measures
$\mu$ and $µ^{\,\prime}$ are doubling. Let $D^{\,\prime}\subset
X^{\,\prime}$ be a bounded domain which is finitely connected at the
boundary, and let $Q:X^{\,\prime}\rightarrow (0, \infty)$ be
integrable function in $D^{\,\prime},$ $Q(y)\equiv 0$ for $y\in
X^{\,\prime}\setminus D^{\,\prime}.$ Suppose that $f:D\rightarrow
D^{\,\prime},$ $D^{\,\prime}=f(D),$ is an open discrete and closed
mapping satisfying the relation~(\ref{eq2*A}) at any point $y_0\in
\partial D^{\,\prime},$ moreover, suppose that $D$ has a weakly flat
boundary and $\overline{D}$ is compact in $X.$ Then $f$ has a
continuous extension
$\overline{f}:\overline{D}\rightarrow\overline{D^{\,\prime}}_P$ such
that $\overline{f}(\overline{D})=\overline{D^{\,\prime}}_P.$ }
\end{theorem}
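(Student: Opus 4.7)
The plan is to define, for each $x_0 \in \partial D$, the cluster set $C(x_0, f) := \{y \in \overline{D^{\,\prime}} : \text{there exist } x_n \to x_0 \text{ with } f(x_n) \to y\}$, and to prove that $C(x_0, f)$ consists of a single point of $\partial D^{\,\prime}$. Once this is done, Proposition~\ref{pr3} identifies this point with the impression of a unique prime end $P_0 \in E_{D^{\,\prime}}$, and the extension is defined by $\overline{f}(x_0) := P_0$. Continuity in the prime end metric from Proposition~\ref{pr2}, together with the equality $\overline{f}(\overline{D}) = \overline{D^{\,\prime}}_P$, will then follow from standard sequential arguments using the compactness of $\overline{D}$ and the density of $f(D) = D^{\,\prime}$ in $\overline{D^{\,\prime}}_P$.

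First I would verify that $C(x_0, f) \subset \partial D^{\,\prime}$. Since $X^{\,\prime}$ is proper (being complete, doubling, and supporting a Poincar\'e inequality), $\overline{D^{\,\prime}}$ is compact, so $C(x_0, f)$ is nonempty. The inclusion in $\partial D^{\,\prime}$ will follow from the standard argument that the open-discrete-closed hypothesis on $f$ forbids an escaping sequence from mapping to an interior point: if $y^{\,*} \in C(x_0, f) \cap D^{\,\prime}$ were realized along $x_n \to x_0 \notin D$, then $\{x_n\}$ would be closed in $D$, so $\{f(x_n)\}$ would be closed in $D^{\,\prime}$ by closedness of $f$, forcing $y^{\,*} = f(x_n)$ for infinitely many $n$---in contradiction with discreteness of $f^{-1}(y^{\,*})$ and $x_n \to x_0 \in \partial D$.

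The heart of the argument will be to rule out $y_1 \ne y_2$ both lying in $C(x_0, f)$. I would fix $0 < r_1 < r_2 < d^{\,\prime}(y_1, y_2)/4$ so that $\overline{B(y_1, r_2)} \cap \overline{B(y_2, r_1)} = \varnothing$, and then construct two disjoint continua $K_1, K_2 \subset D$ such that $\overline{K_i}^{\,X} \ni x_0$, each $K_i$ also contains a point at a fixed positive distance from $x_0$, and $f(K_i) \subset \overline{B(y_i, r_1)}$ for $i = 1, 2$. Such continua are built by joining successive terms of sequences $x_n^{(i)} \to x_0$ with $f(x_n^{(i)}) \to y_i$ inside connected components of $f^{-1}(B(y_i, r_1))$; the open-discrete-closed hypothesis together with compactness of $\overline{D}$ ensures that these components accumulate at $\partial D$ and can be arranged, after passing to subsequences, to form a single chain. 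Any path $\beta$ in $D$ joining $K_1$ to $K_2$ then has $f(\beta)$ leaving $\overline{B(y_1, r_2)}$, so $\beta$ admits a subpath lying in $\Gamma_f(y_1, r_1, r_2)$. Combining the minorization property~(\ref{eq32*A}) with~(\ref{eq2*A}) applied to the constant admissible function $\eta \equiv 1/(r_2 - r_1)$ will yield
$$
M_\alpha(\Gamma(K_1, K_2, D)) \le M_\alpha(\Gamma_f(y_1, r_1, r_2)) \le \frac{1}{(r_2 - r_1)^{\alpha^{\,\prime}}} \int_{D^{\,\prime}} Q(y)\, d\mu^{\,\prime}(y) =: C_0 < \infty,
$$
a fixed finite bound. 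On the other hand, because each $K_i$ meets $\partial V$ for every sufficiently small neighborhood $V$ of $x_0$ and $\partial U$ for $U$ small enough, the weakly flat property of $\partial D$ at $x_0$ will deliver $M_\alpha(\Gamma(K_1, K_2, D)) \ge P$ for any prescribed $P > 0$. Choosing $P > C_0$ produces the contradiction.

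The hard part will be the construction of $K_1, K_2$: they must have $f$-image inside the small ball $B(y_i, r_1)$ (to feed the Poletsky bound) and simultaneously span from $x_0$ out to a uniform positive distance (to feed the weakly flat estimate). This forces a careful analysis, via openness, discreteness, and closedness of $f$, of how the components of $f^{-1}(B(y_i, r_1))$ containing $x_n^{(i)}$ reach $\partial D$ and can be threaded into a single continuum. Once $C(x_0, f) = \{y_0\}$ has been established, Proposition~\ref{pr3} picks out the unique prime end $P_0$ with impression $\{y_0\}$, setting $\overline{f}(x_0) := P_0$; continuity of $\overline{f}$ with respect to the metric $m_P$ of Proposition~\ref{pr2} will follow from the sequential characterization of convergence to prime ends applied to any $x_n \to x_0$, and the surjectivity $\overline{f}(\overline{D}) = \overline{D^{\,\prime}}_P$ will follow from a density argument using that $D^{\,\prime} = f(D)$ is dense in $\overline{D^{\,\prime}}_P$ and that $\overline{D}$ is compact.
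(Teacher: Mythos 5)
There is a genuine gap, and it sits at the very first step of your plan: you reduce the theorem to showing that the cluster set $C(x_0,f)$ \emph{with respect to the metric $d^{\,\prime}$} is a single point of $\partial D^{\,\prime}$, and then you write that Proposition~\ref{pr3} ``identifies this point with the impression of a \emph{unique} prime end $P_0$.'' Proposition~\ref{pr3} asserts no such uniqueness: for a domain that is merely finitely connected at the boundary, a single boundary point can be the impression of several distinct prime ends (this is exactly the situation in the paper's own Example~1, the slit disk $D^{\,\prime}={\Bbb D}\setminus I$, where each point of the slit carries two prime ends, one from each side). Consequently $\overline{f}(x_0):=P_0$ is not well defined, and, worse, a singleton cluster set in $d^{\,\prime}$ does not imply the existence of a limit in the prime end metric $m_P$: a sequence $f(x_k)$ can converge to one point $y_0\in\partial D^{\,\prime}$ while oscillating between two prime ends with that common impression. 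Your contradiction machinery, built around two balls $B(y_1,r_1)$ and $B(y_2,r_1)$ centered at \emph{distinct points} $y_1\ne y_2$, never fires in this case, so the theorem is not proved. The paper avoids this by working in the compact metric space $(\overline{D^{\,\prime}}_P,m_P)$ from the outset: it extracts $f(x_k)\rightarrow P_1$ and $f(y_k)\rightarrow P_2$ with $P_1\ne P_2$ \emph{as prime ends}, and the separating annulus is built not from two distinct boundary points but from the chains of acceptable sets: with $y_0=I([E_k])$ and $r_0=d^{\,\prime}(y_0,\partial E_{k_0+1}\cap D^{\,\prime})$, every path from $E_{m_0+1}$ to $G_{m_0+1}$ must cross $\partial E_{k_0+1}$ and hence $\Gamma(D_0,D_*,D^{\,\prime})>\Gamma(S(y_0,r_0/2),S(y_0,r_0),A(y_0,r_0/2,r_0))$ — an argument that works even when $I(P_1)=I(P_2)$.

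A secondary, but still substantive, weakness is the construction of the continua $K_1,K_2$, which you acknowledge is ``the hard part'' and leave as a sketch. There is no reason the component of $f^{\,-1}(B(y_i,r_1))$ containing $x_n^{(i)}$ should reach a fixed positive distance from $x_0$, nor that successive components can be ``threaded into a single chain.'' The paper sidesteps this entirely: since the acceptable sets $E_k$ are path connected, it joins $f(x_k)$ to the fixed point $f(x_1)$ by a path inside $E_k$ and lifts that path by Lemma~\ref{lem9}; the lift terminates at one of the finitely many points of $f^{\,-1}(f(x_1))$, which lie at a uniform positive distance $R_0$ from $\partial D$, giving continua $|\alpha_k|$, $|\beta_k|$ that span from $x_k$ near $x_0$ out to distance $R_0$. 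The final double estimate (weak flatness from below, the inverse Poletskii inequality with $\eta\equiv 2/r_0$ from above) is the same in spirit as yours, but it can only be run once the two prime ends — not merely two boundary points — have been separated.
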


\medskip
Given $\delta>0, M>0$ domains $D\subset X, D^{\,\prime}\subset
X^{\,\prime},$ and a continuum $A\subset D^{\,\prime}$ denote by
${\frak S}_{\delta, A, M}(D, D^{\,\prime})$ a family of all open
discrete and closed mappings $f$ of $D$ onto $D^{\,\prime}$ such
that the condition~(\ref{eq2*A}) holds for some $Q=Q_f$ for any
$y_0\in D^{\,\prime}$ and such that $d(f^{\,-1}(A),
\partial D)\geqslant~\delta$ and $\Vert Q_f\Vert_{L^1(D^{\,\prime})}\leqslant M.$

\medskip
Now let us talk about the equicontinuity of families of mappings in
the closure of a domain. Statements like the ones below have been
established in various situations in~\cite[Theorem~1.2]{SSD} and
\cite[Theorem~2]{Sev$_2$}. To such a great degree of generality,
this statement is proved for the first time.

\medskip
\begin{theorem}\label{th2}
{\sl Let $D$ and $D^{\,\prime}$ be domains with finite Hausdorff
dimensions $\alpha$ and $\alpha^{\,\prime}\geqslant 2$ in spaces
$(X,d,\mu)$ and $(X^{\,\prime},d^{\,\prime}, \mu^{\,\prime}),$
respectively. Assume that $X$ is locally connected, $\overline{D}$
and $\overline{D^{\,\prime}}$ are compact sets, $D$ has a weakly
flat boundary, $D$ is weakly flat as a metric space, $X^{\,\prime}$
is complete and supports $\alpha^{\,\prime}$-Poincar\'{e}
inequality, and that the measures $\mu$ and $µ^{\,\prime}$ are
doubling. Let $D^{\,\prime}\subset X^{\,\prime}$ be a regular domain
which is finitely connected at the boundary. Then any $f\in{\frak
S}_{\delta, A, M}(D, D^{\,\prime})$ has a continuous extension
$\overline{f}:\overline{D}\rightarrow \overline{D^{\,\prime}}_P,$
wherein $\overline{f}(\overline{D})=\overline{D^{\,\prime}}_P$ and,
in addition, the family ${\frak S}_{\delta, A, M }(\overline{D},
\overline{D^{\,\prime}})$ of all extended mappings
$\overline{f}:\overline{D}\rightarrow \overline{D^{\,\prime}}_P$ is
equicontinuous in $\overline{D}.$ }
\end{theorem}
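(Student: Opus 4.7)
The plan has two stages: first, obtain the continuous boundary extension directly from Theorem~\ref{th1A}, and second, establish equicontinuity of the family of extensions on $\overline{D}$ through a modulus-based contradiction argument.

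For the extension, I verify that each $f\in {\frak S}_{\delta, A, M}(D,D^{\,\prime})$ meets the hypotheses of Theorem~\ref{th1A}: $f$ is open, discrete and closed, the weight $Q_f$ is integrable on $D^{\,\prime}$ with $\|Q_f\|_{L^1(D^{\,\prime})}\leq M$, and~(\ref{eq2*A}) holds at every $y_0\in D^{\,\prime}$, in particular at every boundary point of $D^{\,\prime}$. The remaining geometric and metric hypotheses on $D$, $D^{\,\prime}$, $X$ and $X^{\,\prime}$ match those of Theorem~\ref{th1A}. Hence every such $f$ extends to a continuous surjection $\overline{f}:\overline{D}\to\overline{D^{\,\prime}}_P$.

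For equicontinuity, I would argue by contradiction. Assume there exist $x_0\in\overline{D}$, $\varepsilon_0>0$, sequences $x_k,z_k\in\overline{D}$ with $x_k, z_k\to x_0$, and maps $f_k\in{\frak S}_{\delta, A, M}$ such that $m_P(\overline{f_k}(x_k),\overline{f_k}(z_k))\geq\varepsilon_0$. By compactness of $\overline{D^{\,\prime}}_P$ (Proposition~\ref{pr2}) I pass to a subsequence with $\overline{f_k}(x_k)\to P_1$ and $\overline{f_k}(z_k)\to P_2$, $m_P(P_1,P_2)\geq\varepsilon_0$. Since $D^{\,\prime}$ is finitely connected at the boundary, Proposition~\ref{pr3} assigns singleton impressions $y_1,y_2\in\overline{D^{\,\prime}}$ to $P_1$ and $P_2$; choose radii $0<r_1<r_2<d^{\,\prime}(y_1,y_2)/3$ and focus on the annulus $A(y_1,r_1,r_2)$. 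For large $k$, any curve in $D$ from a small neighborhood of $x_k$ to a small neighborhood of $z_k$ has an $f_k$-image that must traverse $A(y_1,r_1,r_2)$ from $S(y_1,r_1)$ to $S(y_1,r_2)$, so a restriction of it belongs to $\Gamma_{f_k}(y_1,r_1,r_2)$.

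The contradiction is produced by bounding $M_{\alpha}(\Gamma_{f_k}(y_1,r_1,r_2))$ two ways. The inverse Poletsky estimate~(\ref{eq2*A}) with the choice $\eta(r)\equiv 1/(r_2-r_1)$ together with $\|Q_{f_k}\|_{L^1}\leq M$ gives a uniform upper bound of order $M/(r_2-r_1)^{\alpha^{\,\prime}}$. A diverging lower bound comes from a family $\Gamma_k$ of curves in $D$ joining shrinking neighborhoods of $x_k$ and $z_k$, whose $f_k$-images lie in $\Gamma(S(y_1,r_1),S(y_1,r_2),A(y_1,r_1,r_2))$: in the interior case $x_0\in D$, local connectedness of $X$ and standard Fuglede-type estimates suffice, whereas in the boundary case $x_0\in\partial D$, the weakly flat property of $\partial D$ at $x_0$ yields arbitrarily large $\alpha$-modulus, with $f_k^{-1}(A)$ playing the role of a $k$-uniform anchor continuum that stays at distance $\geq\delta$ from $\partial D$. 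Via the monotonicity relation~(\ref{eq32*A}) the contradiction follows. The main obstacle is precisely the boundary case: one must simultaneously guarantee that neighborhoods of $x_k,z_k$ can serve as the two continua in the definition of weak flatness with respect to a common enlarging neighborhood $U$ of $x_0$, and that the resulting large-modulus curves in $D$ really do have $f_k$-images in $\Gamma_{f_k}(y_1,r_1,r_2)$; the uniform separation $d(f_k^{-1}(A),\partial D)\geq\delta$ is what allows both, by preventing $\overline{f_k}$ from collapsing whole neighborhoods of $x_0$ onto $P_1$ or $P_2$ and permitting the radii $r_1,r_2$ to be fixed independently of $k$.
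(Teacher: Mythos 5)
Your first stage (boundary extension via Theorem~\ref{th1A}) and the overall strategy for the interior case (which is exactly Theorem~\ref{th4A}) match the paper. The genuine gap is in your separation argument at the boundary. You separate $P_1$ and $P_2$ by passing to their impressions $y_1=I(P_1)$, $y_2=I(P_2)$ and choosing radii $0<r_1<r_2<d^{\,\prime}(y_1,y_2)/3$. But two \emph{distinct} prime ends of a domain that is finitely connected at the boundary can have the \emph{same} singleton impression (the standard example is a slit disk, where each interior point of the slit is the impression of two prime ends; this is precisely the situation of Example~1 in the paper, where $D^{\,\prime}={\Bbb D}\setminus I$). Proposition~\ref{pr3} gives you singleton impressions, not distinct ones. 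When $y_1=y_2$ your radii cannot be chosen, and curves whose images run from near $P_1$ to near $P_2$ need not cross any annulus centered at $y_1$ at all, so the upper bound on $M_{\alpha}(\Gamma_{f_k}(y_1,r_1,r_2))$ controls nothing. The paper avoids this by separating $P_1$ and $P_2$ through their chains of acceptable sets ($d_k\cap g_k=\varnothing$ eventually), not through their impressions: Lemma~\ref{lem1} builds disjoint paths $\gamma_{1,m},\gamma_{2,m}$ from a fixed anchor continuum $A$ to $f_m(z_m)$ and $f_m(z^{\,\prime}_m)$ via a quasiconformal chart onto a domain with locally quasiconformal boundary, and Lemma~\ref{lem4A} bounds $M_{\alpha}(\Gamma_m)$ by covering the fixed parts of these paths with finitely many annuli plus one annulus whose radius is determined by the acceptable sets $\partial d_{k_0+1}\cap D^{\,\prime}$, which works even when the impressions coincide. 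A related omission: $P_1$ or $P_2$ may be interior points of $D^{\,\prime}$ rather than prime ends (the $f_m$ vary with $m$, so closedness of each $f_m$ does not force the limits onto the boundary), and Lemma~\ref{lem1} explicitly handles that case, whereas your argument assumes both are prime ends.

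A smaller but still substantive point concerns the lower bound. The weak flatness of $\partial D$ requires two \emph{continua} in $D$ meeting both $\partial U$ and $\partial V$; you propose ``neighborhoods of $x_k,z_k$'' anchored by $f_k^{-1}(A)$, but $f_k^{-1}(A)$ need not be connected, so it cannot itself serve as a continuum, and small neighborhoods of points tending to $\partial D$ do not meet $\partial U$ for a fixed $U$. The paper's device is to take the whole $f_m$-liftings $\gamma^{*}_{1,m},\gamma^{*}_{2,m}$ of the constructed paths (Lemma~\ref{lem9}), whose terminal points lie in $f_m^{-1}(A)$ and hence at distance $\geqslant\delta$ from $\partial D$; this forces $d(|\gamma^{*}_{i,m}|)>\delta/2$, so these loci are continua crossing both $\partial U$ and $\partial V$ for $U=B(z_0,r_0)$ with $r_0<\delta/4$, and since $\Gamma(|\gamma^{*}_{1,m}|,|\gamma^{*}_{2,m}|,D)\subset\Gamma_m$ the weak flatness contradicts the uniform bound $M_{\alpha}(\Gamma_m)\leqslant N$. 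You would need to reproduce this lifting construction (or an equivalent one) to make your ``anchor'' idea precise.
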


\section{Continuous extension of mappings to the boundary }

\medskip
Before proving the main result, we give some more definitions, and
also prove one important statement.

\medskip Given a domain $D\subset X,$ the {\it cluster set} of
$f:D\rightarrow Y$ at $b\in \partial D$ is the set $C(f, b)$ of all
points $z\in Y$ for which there exists a sequence
$\{b_k\}_{k=1}^{\infty}$ in $D$ such that $b_k\rightarrow b$ and
$f(b_k)\rightarrow z$ as $k\rightarrow\infty.$ For a non-empty set
$E\subset \partial D$ let $C(f, E)=\cup C(f, b),$ where $b$ ranges
over set $E.$

\medskip Let $D\subset X,$ $f:D \rightarrow X^{\,\prime}$ be a
discrete open mapping, $\beta: [a,\,b)\rightarrow X^{\,\prime}$ be a
path, and $x\in\,f^{-1}\left(\beta(a)\right).$ A path $\alpha:
[a,\,c)\rightarrow D$ is called a {\it maximal $f$-lifting} of
$\beta$ starting at $x,$ if $(1)\quad \alpha(a)=x\,;$ $(2)\quad
f\circ\alpha=\beta|_{[a,\,c)};$ $(3)$\quad for
$c<c^{\prime}\leqslant b,$ there is no path $\alpha^{\prime}:
[a,\,c^{\prime})\rightarrow D$ such that
$\alpha=\alpha^{\prime}|_{[a,\,c)}$ and $f\circ
\alpha^{\,\prime}=\beta|_{[a,\,c^{\prime})}.$ If $X$ and
$X^{\,\prime}$ are locally compact, $X$ is locally connected, and
$f:D \rightarrow X^{\,\prime}$ is discrete and open, then there is a
maximal $f$-lifting of $\beta$ starting at $x,$
see~\cite[Lemma~2.1]{SM}. We also prove an even more general
statement (for the space ${\Bbb R}^n$ see, for example,
\cite[Theorem~3.7]{Vu}).

\medskip
\begin{lemma}\label{lem9}
{\sl Let $X$ and $X^{\,\prime}$ be metric spaces, let $X$ be locally
connected, let $X^{\,\prime}$ be locally compact, let $D$ be a
domain in $X,$ and let $f:D \rightarrow X^{\,\prime}$ be a discrete
open and closed mapping of $D$ onto $D^{\,\prime}\subset
X^{\,\prime}.$ Assume that $\overline{D}$ is compact. If $\beta:
[a,\,b)\rightarrow X^{\,\prime}$ be a path such that
$x\in\,f^{\,-1}(\beta(a)),$ then there is a whole $f$-lifting of
$\beta$ starting at $x,$ in other words, there is a path $\alpha:
[a,\,b)\rightarrow X$ such that $f(\alpha(t))=\beta(t)$ for any
$t\in [a,\,b).$ Moreover, if $\beta(t)$ has a limit
$\lim\limits_{t\rightarrow b-0}\beta(t):=B_0\in D^{\,\prime},$ then
$\alpha$ has a continuous extension to $b$ and $f(\alpha(b))=B_0.$ }
\end{lemma}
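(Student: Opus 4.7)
The plan is to invoke \cite[Lemma~2.1]{SM} to obtain a maximal $f$-lifting $\alpha:[a,c)\to D$ of $\beta$ starting at $x$, and then to prove that in fact $c=b$ by a cluster-set argument exploiting compactness of $\overline{D}$ together with discreteness and closedness of $f$. Suppose for contradiction that $c<b$, and consider the cluster set $C(\alpha,c):=\bigcap_{a<s<c}\overline{\alpha([s,c))}\subset\overline{D}$. Being a decreasing intersection of non-empty compact connected sets, $C(\alpha,c)$ is itself non-empty, compact and connected. Moreover, for any $y\in C(\alpha,c)\cap D$, continuity of $f$ gives $f(y)=\lim\limits_{t\to c-}\beta(t)=\beta(c)$, so $C(\alpha,c)\cap D\subset f^{-1}(\beta(c))$, and the latter set is discrete.

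The crux of the proof, which I expect to be the main obstacle, is ruling out cluster points on $\partial D$. I would first dispose of the trivial case in which $\beta$ is constant on some $[c-\delta,c)$: then $\alpha|_{[c-\delta,c)}$ maps into the discrete set $f^{-1}(\beta(c))$ and, being continuous on a connected interval, must be constant, so $C(\alpha,c)$ reduces to a single point in $D$. Otherwise, pick $t_k\nearrow c$ with $\beta(t_k)\ne\beta(c)$ for every $k$, and, by compactness of $\overline{D}$, pass to a subsequence so that $\alpha(t_k)\to y_0\in\overline{D}$. Suppose for contradiction that $y_0\in\partial D$. Since every subsequence of $\alpha(t_k)$ still converges to $y_0\notin D$, the set $E:=\{\alpha(t_k):k\in{\Bbb N}\}\subset D$ has no accumulation point in $D$ and is therefore closed in $D$. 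By closedness of $f$, the image $f(E)=\{\beta(t_k)\}$ is then closed in $D'$; but $\beta(t_k)\to\beta(c)\in D'$ while $\beta(c)\notin f(E)$ by our choice of the $t_k$ --- a contradiction. Hence $C(\alpha,c)\subset D$.

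It now follows that $C(\alpha,c)$ is a non-empty, connected subset of the discrete set $f^{-1}(\beta(c))\subset D$, and so reduces to a single point $\{x_0\}$; this means $\alpha(t)\to x_0$ as $t\to c-$, and we may set $\alpha(c)=x_0$. A second application of \cite[Lemma~2.1]{SM} at $x_0$ produces an $f$-lifting of $\beta$ on $[c,c')$ starting at $x_0$ for some $c'>c$, and concatenating with $\alpha$ yields a strict extension of $\alpha$, contradicting its maximality. Thus $c=b$, proving the existence of the whole $f$-lifting. For the ``moreover'' clause I would rerun exactly the same cluster-set argument at the endpoint $b$ with $\beta(c)$ replaced by $B_0\in D'$: the cluster set $C(\alpha,b)$ is non-empty by compactness of $\overline{D}$, is contained in $D$ by the same closedness argument, is connected, and lies in the discrete set $f^{-1}(B_0)$, so it collapses to a single point $x_0\in D$ at which we set $\alpha(b):=x_0$, obtaining the required continuous extension with $f(\alpha(b))=B_0$.
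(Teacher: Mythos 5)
Your overall architecture coincides with the paper's: obtain a maximal $f$-lifting from \cite[Lemma~2.1]{SM}, show that the cluster set of $\alpha$ at $c$ is a nonempty continuum on which $f$ is constant, use discreteness to collapse it to a single point of $D$, and contradict maximality by restarting the lifting at that point; the ``moreover'' clause is handled identically in both. The one substantive difference is the boundary-avoidance step: the paper disposes of it in one line by citing $C(f,\partial D)\subset\partial D^{\,\prime}$ (Proposition~2.1 of \cite{Sev$_1$}, itself a consequence of closedness), and that inclusion handles an \emph{arbitrary} cluster point $z_0\in\partial D$ at once, since it would force $\beta(c)\in\partial D^{\,\prime}$ while $\beta(c)$ lies in the open set $D^{\,\prime}$. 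You instead re-derive the needed fact directly from closedness of $f$, which makes the proof more self-contained but costs you a case distinction.

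That case distinction is where your write-up has a gap. Your closed-set argument only rules out that the particular limit $y_0$ of the particular subsequence $\alpha(t_k)$ with $\beta(t_k)\ne\beta(c)$ lies on $\partial D$; the sentence ``Hence $C(\alpha,c)\subset D$'' does not follow from it, because a cluster point of $\alpha$ at $c$ may arise along a sequence $s_k\nearrow c$ with $\beta(s_k)=\beta(c)$, in which case $E=\{\alpha(s_k)\}$ sits inside $f^{\,-1}(\beta(c))$ and your contradiction ``$\beta(c)\notin f(E)$'' is unavailable. Two repairs are within reach of what you already have: (i) closedness of $f$ together with compactness of $\overline{D}$ makes $f$ proper, so $f^{\,-1}(\beta(c))$ is compact and discrete, hence finite, and a sequence in a finite subset of $D$ cannot converge to a point of $\partial D$; or (ii) drop the claim $C(\alpha,c)\subset D$ altogether and argue that $C(\alpha,c)$ is a continuum which, by your $y_0$ argument, contains a point $x_0\in D\cap f^{\,-1}(\beta(c))$; choosing $r>0$ with $B(x_0,r)\subset D$ and $B(x_0,r)\cap f^{\,-1}(\beta(c))=\{x_0\}$ shows $\{x_0\}$ is relatively open (and closed) in $C(\alpha,c)$, so connectedness gives $C(\alpha,c)=\{x_0\}$. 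With either patch the remainder of your proof, including the maximality contradiction and the endpoint extension, goes through unchanged.
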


\medskip
\begin{proof}
Since $f|_D\rightarrow X^{\,\prime}$ is a mapping of the locally
compact space $D$ to $X^{\,\prime},$ in addition, $X$ is locally
connected and $X^{\,\prime}$ is locally compact, the existence of a
maximal $f$-lifting $\alpha: [a,\,b)\rightarrow X$ follows from
Lemma~\cite[Lemma~2.1]{SM}. Let us prove that this lifting is whole,
for which we use the general scheme from~\cite[Proof of
Lemma~2.1]{Sev$_1$}, cf.~\cite[Lemma~1]{SeSkSv}. Suppose the
opposite, namely that $c\ne b.$ Note that $\alpha$ can not tend to
the boundary of $D$ as $t\rightarrow c-0,$ since $C(f,
\partial D)\subset \partial D^{\,\prime}$ by Proposition~2.1 in \cite{Sev$_1$}. Then $C(\alpha, c)\subset
D.$

\medskip
Consider
$$G=\left\{x\in X:\, x=\lim\limits_{k\rightarrow\,\infty}
\alpha(t_k)
 \right\}\,,\quad t_k\,\in\,[a,\,c)\,,\quad
 \lim\limits_{k\rightarrow\infty}t_k=c\,.$$
Letting to subsequences, we may restrict us by monotone sequences
$t_k.$ For $x\in G,$ by continuity of $f,$
$f\left(\alpha(t_k)\right)\rightarrow\,f(x)$ as
$k\rightarrow\infty,$ where $t_k\in[a,\,c),\,t_k\rightarrow c$ as
$k\rightarrow \infty.$ However,
$f\left(\alpha(t_k)\right)=\beta(t_k)\rightarrow\beta(c)$ as
$k\rightarrow\infty.$ Thus, $f$ is a constant on $G.$ On the other
hand, $\overline{\alpha}$ is a compact set, because
$\overline{\alpha}$ is a closed subset of the compact space
$\overline{D}$ (see \cite[Theorem~2.II.4, $\S\,41$]{Ku}). Now, by
Cantor condition on the compact $\overline{\alpha},$ by monotonicity
of $\alpha([t_k,\,c)),$
$$G\,=\,\bigcap\limits_{k\,=\,1}^{\infty}\,\overline{\alpha\left(\left[t_k,\,c\right)\right)}
\ne\varnothing\,,
$$
%
see~\cite[1.II.4, $\S\,41$]{Ku}. Now, by~\cite[Theorem~5.II.5,
$\S\,47$]{Ku}, $\overline{\alpha}$ is connected. By discreteness of
$f,$ $G$ is a single-point set, and $\alpha\colon
[a,\,c)\rightarrow\,D$ extends to a closed path $\alpha\colon
[a,\,c]\rightarrow D$ and $f(\alpha(c))=\beta(c).$ Then,
by~\cite[Lemma~2.1]{SM} there is a new $f$-lifting
$\alpha^{\,\prime}: [c, c^{\,\prime})\rightarrow D$ of $\beta$
starting at $\alpha(c).$ Uniting liftings $\alpha$ and
$\alpha^{\,\prime}$ we obtain a new $f$-lifting $\alpha:[a,
c^{\,\prime})\rightarrow D$ starting at $x$ that contradicts the
maximality of $\alpha.$ The contradiction obtained above proves that
$c=b.$

\medskip
Let there exits $\lim\limits_{t\rightarrow b-0}\beta(t):=B_0\in
D^{\,\prime}.$ Arguing similarly to what was proved above, we obtain
that the set $G$ consists of one point $p_0\in D$ and, therefore,
the path $\alpha$ extends to the closed path $\alpha:[a,
c]\rightarrow D.$ The equality $f(\alpha(b))=B_0$ follows from the
continuity of the mapping $f.$~$\Box$
\end{proof}

\medskip
{\it Proof of Theorem~\ref{th1A}.} Put $x_0\in\partial D.$ It is
necessary to show the possibility of continuous extension of the
mapping $f$ to $x_0.$ Assume that the conclusion about the
continuous extension of the mapping $f$ to the point $x_0$ is not
correct. Then any prime end $P_0 \in E_{D^{\,\prime}}$ is not a
limit of $f$ at the point $x_0.$ It follows that, there is some
sequence $x_k\in D,$ $k=1,2,\ldots,$ $x_k\rightarrow x_0 $ as
$k\rightarrow \infty,$ and a number $\varepsilon_0>0 $ such that
$m_P(f(x_k), P_0)\geqslant \varepsilon_0$ for any $k\in {\Bbb N},$
where $m_P$ is defined in Proposition~\ref{pr2}.
By~\cite[Theorem~10.10]{ABBS}, $\left(\overline{D^{\,\prime}}_P,
m_P\right)$ is a compact metric space. Thus, we may assume that
$f(x_k)$ converges to some $P_1\ne P_0,$
$P_1\in\overline{D^{\,\prime}}_P$ as $k\rightarrow\infty.$ Since $f$
has no a limit at $x_0$ by the assumption, there is some sequence
$y_k,$ $y_k\rightarrow x_0$ as $k\rightarrow\infty,$ such that
$m_P(f(y_k), P_1)\geqslant \varepsilon_1$ for any $k\in {\Bbb N}$
and some $\varepsilon_1>0.$ Since the space
$(\overline{D^{\,\prime}}_P, m_P)$ is compact, we may assume that
$f(y_k)\rightarrow P_2$ as $k\rightarrow \infty,$ $P_1\ne P_2,$
$P_2\in \overline{D^{\,\prime}}_P.$ Since $f$ is closed, $f$ is
boundary preserving, as well (see
e.g.~\cite[Proposition~2.1]{Sev$_1$}. Thus, $P_1, P_2\in
E_{D^{\,\prime}}.$

\medskip
The rest of the proof is close enough to the proof of Lemma~4.1
in~\cite{Sev$_1$}. Let $P_1=[E_k],$ $k=1,2,\ldots,$ and $P_2=[G_l],$
$l=1,2,\ldots, .$ By Remark 4.5 in \cite{ABBS} we may consider that
the sets $E_k$ and $G_l$ are open. By the assumption $X^{\,\prime}$
is complete and supports a $\alpha^{\,\prime}$-Poincar\'{e}
inequality, and that the measure $µ^{\,\prime}$ is doubling (see
\cite{ABBS}). In this case, a space $X^{\,\prime}$ is quasiconvex
(see \cite[Theorem~17.1]{Ch}) and, consequently, $X^{\,\prime}$ is
locally path connected. By Mazurkiewicz–Moore–Menger theorem,
$X^{\,\prime}$ is locally arcwise connected, see \cite[Theorem~1,
Ch.~6, $\S$ 50, item II]{Ku}. Since $E_k$ and $G_l$ are domains,
they are path connected for any $k, l\in {\Bbb N}$
(see~\cite[Theorem~2.I.50, Ch.6]{Ku}).

\medskip
Let us show that there exists $k_0\in {\Bbb N}$ such that
\begin{equation}\label{eq12}
E_k\cap G_k=\varnothing\quad \forall\,\,k\geqslant k_0\,.
\end{equation}
Suppose the contrary, i.e., suppose that for every $l=1,2,\ldots$
there exists an increasing sequence $k_l,$ $l=1,2,\ldots,$ such that
$x_{k_l}\in E_{k_l}\cap G_{k_l},$ $l=1,2,\ldots .$ Now
$x_{k_l}\rightarrow P_1$ and $x_{k_l}\rightarrow P_2,$
$l\rightarrow\infty.$ Let $m_P$ be the metric on
$\overline{D^{\,\prime}}_P$ defined in Proposition \ref{pr2}. By
triangle inequality,
$$m_P(P_1, P_2)\leqslant m_P(P_1, x_{k_l})+m_P(x_{k_l}, P_2)
\rightarrow 0,\qquad l\rightarrow\infty\,,$$
that contradicts to Proposition~\ref{pr2}. Thus, (\ref{eq12}) holds,
as required.

\medskip
Denote $y_0:=I([E_k])$ (see Proposition \ref{pr3}). Arguing as in
the proof of Lemma~2.1 in~\cite{Sev$_1$}, we may show that, for
every $r>0$ there exists $N\in {\Bbb N}$ such that
\begin{equation}\label{eq10} E_k\subset
B(y_0, r)\cap D^{\,\prime}\quad \forall\,\, k\geqslant N\,.
\end{equation}
Since $D^{\,\prime}$ is connected and $E_{k_0+1}\ne D^{\,\prime},$
we obtain that $\partial E_{k_0+1}\cap D^{\,\prime}\ne\varnothing$
(see \cite[Ch.~5, $\S\,$46, item I]{Ku}). Set
$r_0:=d^{\,\prime}(y_0,
\partial E_{k_0+1}\cap D^{\,\prime}).$ Since $\overline{E_{k_0}}$ is compact, $r_0>0.$
By (\ref{eq10}), there is $m_0\in {\Bbb N},$ $m_0>k_0+1,$ such that
\begin{equation}\label{eq10A} E_k\subset
B(y_0, r_0/2)\cap D^{\,\prime}\quad \forall\,\, k\geqslant m_0\,.
\end{equation}

Set $D_0:=E_{m_0+1},$ $D_*:=G_{m_0+1}.$ Let us to show that
\begin{equation}\label{eq11}
\Gamma(D_0, D_*, D^{\,\prime})>\Gamma(S(y_0, r_0/2), S(y_0, r_0),
A(y_0, r_0/2, r_0))\,,
\end{equation}
where $A(y_0, r_1, r_2)$ is defined in (\ref{eq1**}).
Assume that $\gamma\in \Gamma(D_0, D_*, D^{\,\prime}),$ $\gamma:[0,
1]\rightarrow D^{\,\prime}.$ Set
$$|\gamma|:=\{x\in D^{\,\prime}: \exists\,t\in[0, 1]:
\gamma(t)=x\}\,.$$
By (\ref{eq12}), $|\gamma|\cap E_{k_0+1}\ne\varnothing\ne
|\gamma|\cap (D^{\,\prime}\setminus E_{k_0+1}).$ Thus,
\begin{equation}\label{eq13}
|\gamma|\cap \partial E_{k_0+1}\ne\varnothing
\end{equation} (see \cite[Theorem 1, $\S\,$46, item I]{Ku}).
Moreover, observe that
\begin{equation}\label{eq14}
\gamma(1)\not\in \partial E_{k_0+1}\,.
\end{equation}
Suppose the contrary, i.e., that $\gamma(1)\in \partial E_{k_0+1}.$
By definition of prime end, $\partial E_{k_0+1}\cap
D^{\,\prime}\subset \overline{E_{k_0}}.$ Since ${\rm
dist}\,(D^{\,\prime}\cap
\partial E_{k+1}, D^{\,\prime}\cap \partial E_k)>0$ for all $k=1, 2,\ldots,$ we
obtain that $\partial E_{k_0+1}\cap D^{\,\prime}\subset E_{k_0}.$
Now, we have that $\gamma(1)\in E_{k_0}$ and, simultaneously,
$\gamma(1)\in G_{m_0+1}\subset G_{k_0}.$ The last relations
contradict with (\ref{eq12}). Thus, (\ref{eq14}) holds, as required.

\medskip
By~(\ref{eq10A}), we obtain that $|\gamma|\cap B(y_0,
r_0/2)\ne\varnothing.$ We prove that $|\gamma|\cap
(D^{\,\prime}\setminus B(y_0, r_0/2))\ne\varnothing.$ In fact, if it
is not true, then $\gamma(t)\in B(y_0, r_0/2)$ for every $t\in [0,
1].$ However, by (\ref{eq13}) we obtain that $(\partial
E_{k_0+1}\cap D^{\,\prime})\cap B(y_0, r_0/2)\ne \varnothing,$ that
contradicts to the definition of $r_0.$ Thus, $|\gamma|\cap
(D\setminus B(y_0, r_0/2))\ne\varnothing,$ as required. Now, by
\cite[Theorem 1, $\S\,$46, item I]{Ku}, there exists $t_1\in (0, 1]$
with $\gamma(t_1)\in S(y_0, r_0/2).$ We may consider that
$t_1=\max\{t\in [0, 1]: \gamma(t)\in S(y_0, r_0/2)\}.$ We prove that
$t_1\ne 1.$ Suppose the contrary, i.e., suppose that $t_1=1.$ Now,
we obtain that $\gamma(t)\in B(y_0, r_0/2)$ for every $t\in [0, 1).$
On the other hand, by (\ref{eq13}) and (\ref{eq14}), we obtain that
$\partial E_{k_0+1}\cap B(y_0, r_0/2)\ne \varnothing,$ which
contradicts to the definition of $r_0.$ Thus, $t_1\ne 1,$ as
required. Set $\gamma_1:=\gamma|_{[t_1, 1]}.$

\medskip
By the definition, $|\gamma_1|\cap B(y_0, r_0)\ne\varnothing.$ We
prove that $|\gamma_1|\cap (D^{\,\prime}\setminus B(y_0,
r_0))\ne\varnothing.$ In fact, assume the contrary, i.e., assume
that $\gamma_1(t)\in B(y_0, r_0)$ for every $t\in [t_1, 1].$ Since
$\gamma(t)\in B(y_0, r_0/2)$ for $t<t_1,$ by (\ref{eq13}) we obtain
that $|\gamma_1|\cap
\partial E_{k_0+1}\ne\varnothing.$ Consequently, $B(y_0, r_0)\cap (\partial E_{k_0+1}\cap D^{\,\prime})\ne\varnothing,$
that contradicts to the definition of $r_0.$ Thus, $|\gamma_1|\cap
(D^{\,\prime}\setminus B(y_0, r_0))\ne\varnothing,$ as required.
Now, by \cite[Theorem 1, $\S\,$46, item I]{Ku}, there exists $t_2\in
(t_1, 1]$ with $\gamma(t_2)\in S(y_0, r).$ We may consider that
$t_2=\min\{t\in [t_1, 1]: \gamma(t)\in S(y_0, r_0)\}.$ We put
$\gamma_2:=\gamma|_{[t_1, t_2]}.$ Observe that $\gamma>\gamma_2$ and
$\gamma_2\in\Gamma(S(y_0, r_0/2), S(y_0, r_0), A(y_0, r_0/2, r_0)).$
Thus, (\ref{eq11}) has been proved.

\medskip
Since $x_k\rightarrow P_1$ as $k\rightarrow\infty$ and
$y_l\rightarrow P_2$ as $l\rightarrow\infty,$ there is $M_0\in {\Bbb
N}$ such that $x_k\in D_0$ and $y_l\in D_*$ for any $k\geqslant M_0$
and $l\geqslant M_0.$ Without loss of generality we may assume that
$x_k\in D_0$ and $y_l\in D_*$ for any $k, l\in {\Bbb N}.$ Let
$\gamma_k$ be a path joining $f(x_1)$ and $f(x_k)$ in $D_0,$ and
$\gamma^{\,\prime}_k$ be a path joining $f(y_1)$ and $f(y_k)$ in
$g_1.$ Since by the assumption $X^{\,\prime}$ is complete and the
measure $µ^{\,\prime}$ is doubling, $X^{\,\prime}$ is proper (see
\cite[Proposition~3.1]{BB}); in particular, $X^{\,\prime}$  is
locally compact. Now, by Lemma~\ref{lem9} paths $\gamma_k$ and
$\gamma^{\,\prime}_k$ have whole $f$-liftings $\alpha_k$ and
$\beta_k$ in $D$ starting at points $x_k$ and $y_k,$ respectively
(see Figure~\ref{fig1A}).
\begin{figure}[h]
\centerline{\includegraphics[scale=0.55]{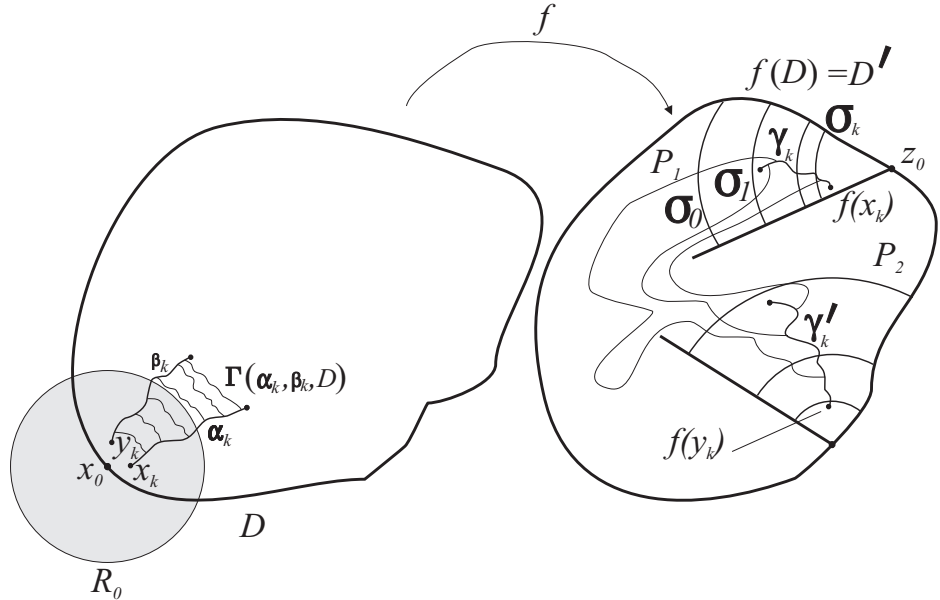}} \caption{To
proof of Theorem~\ref{th1A}}\label{fig1A}
\end{figure}
Observe that points $f(x_1)$ and $f(y_1)$ cannot have more than a
finite number of pre-images in the domain $D$ under the mapping $f.$
Indeed, by Proposition~2.1 in~\cite{Sev$_1$}, $f$ is proper, so that
$f^{\,-1}(f(x_1))$ is a compact set in $D.$ If there is $z_m\in
f^{\,-1}(f(x_1)),$ $m=1,2,\ldots,$ $z_n\ne z_k$ for $n\ne k,$ then
we may find a subsequence $z_{m_p},$ $p=1,2,\ldots ,$ such that
$z_{m_p}\rightarrow z_0$ as $p\rightarrow \infty$ for some $z_0\in
D.$ The latter contradicts the discreteness of $f.$ Thus,
$f^{\,-1}(f(x_1))$ is finite. Similarly, $f^{\,-1}(f(y_1))$ is
finite. Thus there is $R_0>0$ such that $\alpha_k(1), \beta_k(1)\in
D\setminus B(x_0, R_0)$ for any $k=1,2,\ldots .$ Since $D$ has a
weakly flat boundary, for any $P>0$ there is $k=k_P\geqslant 1$ such
that
\begin{equation}\label{eq7}
M_{\alpha}(\Gamma(|\alpha_k|, |\beta_k|,
D))>P\qquad\forall\,\,k\geqslant k_P\,.
\end{equation}
We show that the condition~(\ref{eq7}) contradicts~(\ref{eq2*A}).
Indeed, since $f(\Gamma(|\alpha_k|, |\beta_k|, D))\subset\Gamma(D_0,
D_*, D^{\,\prime})$ and, in addition, by~(\ref{eq11}) $$\Gamma(D_0,
D_*, D^{\,\prime})>\Gamma(S(y_0, r_0/2), S(y_0, r_0), A(y_0, r_0/2,
r_0))$$ we obtain that
$$M_{\alpha}(\Gamma(|\alpha_k|, |\beta_k|, D))>\Gamma_f(y_0, r_0/2, r_0)\,.$$
From the last relation, by minorization principle of the modulus
(see, e.g., \cite[Theorem~1(c)]{Fu})
\begin{equation}\label{eq5}
M_{\alpha}(\Gamma(|\alpha_k|, |\beta_k|, D))\leqslant
M_{\alpha}(\Gamma_f(y_0, r_0/2, r_0))\,.
\end{equation}
Set $\eta(t)= \left\{
\begin{array}{rr}
\frac{2}{r_0}, & t\in [r_0/2, r_0],\\
0,  &  t\not\in [r_0/2, r_0]
\end{array}
\right. .$
Observe that $\eta$ satisfies the relation~(\ref{eqA2}) for
$r_1:=r_1$ and $r_2:=r_0.$ Now, by~(\ref{eq2*A}) and~(\ref{eq5}) we
obtain that
\begin{equation}\label{eq11A}
M_{\alpha}(\Gamma(|\alpha_k|, |\beta_k|, D)) \leqslant
\frac{2}{r_0^{\alpha}}\int\limits_{D^{\,\prime}}
Q(y)\,d\mu^{\,\prime}(y):=c<\infty\quad\forall\,\, k\in {\Bbb N}\,,
\end{equation}
because~$Q\in L^1(D^{\,\prime}).$ The relation~(\ref{eq11A})
contradicts the condition~(\ref{eq7}). The contradiction obtained
above refutes the assumption that there is no limit of the mapping
$f$ at the point $x_0.$

It remains to show that
$\overline{f}(\overline{D})=\overline{D^{\,\prime}}_P.$ Obviously
$\overline{f}(\overline{D})\subset\overline{D^{\,\prime}}_P.$ Let us
to show that $\overline{D^{\,\prime}}_P\subset
\overline{f}(\overline{D}).$ Indeed, let $y_0\in
\overline{D^{\,\prime}}_P.$ Then either $y_0\in D^{\,\prime},$ or
$y_0\in E_{D^{\,\prime}}.$ If $y_0\in D^{\,\prime},$ then
$y_0=f(x_0)$ and $y_0\in \overline{f}(\overline{D}),$ because $f$
maps $D$ onto $D^{\,\prime}.$ Finally, let $y_0\in
E_{D^{\,\prime}}.$ Since $D^{\,\prime}_P$ is compact, there is a
sequence $y_k\in D^{\,\prime}$ such that $m_P(y_k, y_0)\rightarrow
0$ as $k\rightarrow\infty,$ $y_k=f(x_k)$ and $x_k\in D,$ where $m_P$
is defined in~(\ref{pr2}). Since $\overline{D}$ is compact, we may
assume that $x_k\rightarrow x_0,$ where $x_0\in\overline{D}.$ Note
that $x_0\in
\partial D,$ because $f$ is open. Thus
$f(x_0)=y_0\in \overline{f}(\partial D)\subset
\overline{f}(\overline{D}).$ Theorem is completely proved.~$\Box$

\section{Auxiliary lemmas}

Our immediate goal is the equicontinuity (global behavior) of
mappings in the closure of a metric space. We note that for mappings
with the direct Poletsky inequality, similar results were
established in~\cite[section~5]{Sev$_1$}. Our goal is to investigate
similar mappings with the inverse Poletsky inequality. For this
purpose, we use the approach taken when considering similar issues
in Euclidean space, see~\cite{ISS}.

\medskip
A homeomorphism $f: D\rightarrow {\Bbb R}^n,$ $D\subset X,$ is
called a {\it quasiconformal mapping} if there is $K\geqslant 1$
such that
\begin{equation}\label{eq1}
M_n(f(\Gamma))\leqslant K\cdot M_{\alpha}(\Gamma)
\end{equation}
holds for any family of paths $\Gamma$ in $D,$ where $\alpha$ is a
Hausdorff dimension of $D.$ We say that the boundary of the domain
$D$ in ${\Bbb R}^n$ is {\it locally quasiconformal}, if each point
$x_0\in\partial D$ has a neighborhood $U$ in ${\Bbb R}^n$, which can
be mapped by a quasiconformal mapping $\varphi$ onto the unit ball
${\Bbb B}^n\subset{\Bbb R}^n$ so that $\varphi(\partial D\cap U)$ is
the intersection of ${\Bbb B}^n$ with the coordinate hyperplane.  We
say that a bounded domain $D$ in ${\Bbb R}^n$ is {\it regular}, if
$D$ can be quasiconformally mapped to a domain with a locally
quasiconformal boundary whose closure is a compact in ${\Bbb R}^n.$
This definition is slightly different from that given in~\cite{ISS}.
The following statement is true, see
e.g.~\cite[Proposition~1]{SevSkv$_1$}.

\medskip
\begin{proposition}\label{pr1}{\sl\,
Let $n\geqslant 2, $ and let $D$ be a domain in ${\Bbb R}^n$ that is
locally connected on its boundary. Then every two pairs of points
$a\in D, b\in \overline{D}$ and $c\in D, d\in \overline{D}$ can be
joined by non-intersecting paths $\gamma_1:[0, 1]\rightarrow
\overline{D}$ and $\gamma_2:[0, 1]\rightarrow \overline{D}$ so that
$\gamma_i(t)\in D$ for all $t\in (0, 1)$ and all $i=1,2,$ while
$\gamma_1(0)=a,$ $\gamma_1(1)=b,$ $\gamma_2(0)=c$ and
$\gamma_2(1)=d.$}
\end{proposition}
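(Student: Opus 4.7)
The plan is to reduce the claim to two more tractable subproblems: first, an interior problem of joining two interior points of $D$ to auxiliary interior points $b^{\,\prime},d^{\,\prime}$ near $b,d$ via non-intersecting paths in $D$; second, a boundary problem of connecting $b^{\,\prime}$ to $b$ and $d^{\,\prime}$ to $d$ through separate disjoint ``pipes'' inside $\overline{D}$. The nontrivial case is when $b,d\in\partial D$, since if $b,d\in D$ we are already joining four interior points in an open connected subset of ${\Bbb R}^n$, $n\geqslant 2$, and disjointness can be achieved by path-connectedness of $D$ together with a general-position perturbation.

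For the boundary portion, I would use local connectedness at the boundary in the following way. Choose disjoint open neighborhoods $U\ni b$ and $V\ni d$ in ${\Bbb R}^n$ so small that they are disjoint from $\{a,c\}$, and so that $U\cap D$ has a connected component $W_b$ with $b\in\overline{W_b}$, and similarly $V\cap D$ has a connected component $W_d$ with $d\in\overline{W_d}$ (local connectedness gives a connected neighborhood; its intersection with $D$ supplies such a component). Pick $b^{\,\prime}\in W_b$ close to $b$ and $d^{\,\prime}\in W_d$ close to $d$. Since $W_b$ is an open connected subset of ${\Bbb R}^n$, it is path-connected; a standard argument (pick a sequence $b_k\to b$ in $W_b$, join $b_k$ to $b_{k+1}$ by a short arc in $W_b$, and concatenate with reparametrisation) produces a continuous path $\sigma_b\colon[0,1]\to\overline{W_b}$ with $\sigma_b(0)=b^{\,\prime}$, $\sigma_b(1)=b$ and $\sigma_b(t)\in W_b\subset D$ for $t<1$. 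Construct $\sigma_d$ analogously in $W_d$.

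It remains to join $a$ to $b^{\,\prime}$ and $c$ to $d^{\,\prime}$ by non-intersecting paths $\tau_1,\tau_2$ in $D$. Since $D$ is a connected open subset of ${\Bbb R}^n$, it is path-connected, so such paths exist; disjointness is then obtained by a standard general-position argument in dimension $n\geqslant 2$ (for $n\geqslant 3$ by a small smooth perturbation, and for $n=2$ by resolving intersections one at a time through small disks contained in $D$). Concatenating $\gamma_1:=\tau_1\ast\sigma_b$ and $\gamma_2:=\tau_2\ast\sigma_d$ and using that $U$ and $V$ are disjoint and miss $\{a,c\}$ as well as the traces of $\tau_1,\tau_2$ outside $U\cup V$, one obtains non-intersecting paths in $\overline{D}$ with the required endpoint and interior properties.

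The main obstacle I expect is the boundary-approach step: local connectedness at $b$ only yields a connected neighbourhood, and one must upgrade this to the existence of a genuine path in $\overline{W_b}$ that lies in $D$ except at the terminal point $b$. This is where using that the component $W_b$ is an open connected subset of Euclidean space (hence path-connected) is essential, together with a telescoping concatenation of arcs shrinking to $b$ with controlled diameters to guarantee continuity at the endpoint. The interior disjointness, while classical, requires a little care in the planar case; in higher dimensions transversality makes it immediate.
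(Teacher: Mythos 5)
First, a point of comparison: the paper does not prove Proposition~\ref{pr1} at all --- it is stated with the remark ``see e.g.~\cite[Proposition~1]{SevSkv$_1$}'' --- so your argument has to stand on its own rather than be measured against an in-paper proof. Its architecture (approach $b$ and $d$ through small connected pieces of $D$ supplied by local connectedness, then join the four remaining interior points by disjoint arcs in $D$) is the natural one. For the terminal arcs $\sigma_b,\sigma_d$, note that a single component $W_b$ of $U\cap D$ gives no control on the diameters of the arcs joining $b_k$ to $b_{k+1}$; you need the full strength of local connectedness at $b$, namely a nested sequence of neighbourhoods $V_k\subset B(b,1/k)$ with $V_k\cap D$ connected, choosing $b_k\in V_k\cap D$ and joining $b_k$ to $b_{k+1}$ inside $V_k\cap D$ so that the $k$-th arc has diameter at most $2/k$. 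You flag this yourself, so I count it as a presentational rather than substantive gap.

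The genuine gaps are concentrated in the planar case. (i) A single transverse crossing of $\tau_1$ and $\tau_2$ cannot be removed by a modification supported in a small disk $B\subset D$ that avoids the endpoints of $\tau_1$: inside $B$ the arc $\tau_1$ is properly embedded and separates $B$, and $\tau_2$ enters and leaves $B$ on opposite sides of it, so the mod~$2$ intersection count inside $B$ is an obstruction; crossings can only be removed by sweeping $\tau_2$ past an endpoint of $\tau_1$, which is a global move. The correct tool is the fact that a compact arc $A$ contained in a planar domain $D$ does not separate $D$ (apply Janiszewski's theorem to the disjoint closed sets $A$ and $S^2\setminus D$, neither of which separates two given points of $D\setminus A$), and then to choose $\tau_2$ inside the connected open set $D\setminus|\tau_1|$. (ii) Your final concatenation controls $|\tau_1|\cap|\tau_2|$ and $|\sigma_b|\cap|\sigma_d|$, but nothing prevents $\tau_1$ from wandering into $V$ and meeting $|\sigma_d|$, or $\tau_2$ from meeting $|\sigma_b|$; the required disjointness is of $|\tau_1|\cup|\sigma_b|$ from $|\tau_2|\cup|\sigma_d|$. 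This is repaired by constructing $\tau_1$ from $a$ to $b'$ inside $D\setminus\sigma_d([0,1))$ and then $\tau_2$ inside $D$ minus the already-built arc from $a$ to $b$, and the connectedness of these sets is again exactly the non-separation statement above (now for an arc whose closure meets $\partial D$ in the single point $b$ or $d$). For $n\geqslant 3$ both issues disappear by general position, so there the proposal is essentially complete; for $n=2$ the missing non-separation lemma is the real content of the proposition.
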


\medskip
The proof of the following statement completely repeats the proof
of~\cite[Theorem~17.10]{Va}, and therefore is omitted.

\begin{proposition}\label{pr2A}
{\sl\, Let $D\subset {\Bbb R}^n$ be a domain with a locally
quasiconformal boundary, then the boundary of this domain is weakly
flat. Moreover, the neighborhood of $U$ in the definition of a
locally quasiconformal boundary can be taken arbitrarily small, and
in this definition $\varphi(x_0)=0.$ }
\end{proposition}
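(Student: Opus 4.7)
The plan is to reduce the statement to an explicit modulus estimate in the model half-ball by means of the local quasiconformal chart $\varphi:U\to {\Bbb B}^n$ promised by the definition. First I would fix $x_0\in\partial D$ and choose, for a given neighborhood $U_0$ of $x_0$, a smaller neighborhood $U\subset U_0$ on which a quasiconformal $\varphi$ with $\varphi(x_0)=0$ is defined and maps $\partial D\cap U$ into the coordinate hyperplane $\Pi=\{y_n=0\}$. Since $\varphi$ is a homeomorphism sending $D\cap U$ onto the open half-ball $\mathbb{B}^n_+=\mathbb{B}^n\cap\{y_n>0\}$, the problem becomes: show that for any $P>0$ there is a ball $B(0,r)\subset \mathbb{B}^n$ such that any two continua $E^{\,\prime},F^{\,\prime}\subset \mathbb{B}^n_+$ that respectively meet $\partial\mathbb{B}^n$ and $\partial B(0,r)$ satisfy $M_n(\Gamma(E^{\,\prime},F^{\,\prime},\mathbb{B}^n_+))\geqslant P$.

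Next I would produce that model estimate. Reflect $\mathbb{B}^n_+$ across $\Pi$ to recover all of $\mathbb{B}^n$, and extend each continuum by its mirror image; the reflection is a conformal self-map of ${\Bbb R}^n$ that preserves $n$-modulus (up to a constant factor $2$ accounting for doubled families). One now has two continua in $\mathbb{B}^n$, one touching $\partial\mathbb{B}^n$ and the other touching $\partial B(0,r)$, and a standard spherical-ring / Teichm\"uller-type lower bound such as $M_n(\Gamma(E,F,\mathbb{B}^n))\geqslant c_n\log(1/r)$ gives modulus tending to $\infty$ as $r\to 0$. Thus one can choose $r=r(P)$ making the model modulus at least $2KP$.

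Pulling back by $\varphi^{\,-1}$ I would use the quasiconformal inequality $(\ref{eq1})$ in the form $M_n(f(\Gamma))\leqslant K\cdot M_\alpha(\Gamma)$ applied to $\Gamma=\Gamma(E,F,D\cap U)$ with $f=\varphi$: this yields $M_\alpha(\Gamma(E,F,D\cap U))\geqslant M_n(\Gamma(\varphi(E),\varphi(F),\mathbb{B}^n_+))/K\geqslant P$. Taking $V:=\varphi^{\,-1}(B(0,r))$ gives a neighborhood of $x_0$ with $V\subset U\subset U_0$ such that for any continua $E,F\subset D$ meeting $\partial U$ and $\partial V$, the restrictions to $D\cap U$ contain subcontinua touching $\partial U$ and $\partial V$, hence $M_\alpha(\Gamma(E,F,D))\geqslant M_\alpha(\Gamma(E,F,D\cap U))\geqslant P$ by the minorization principle $(\ref{eq32*A})$. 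This establishes weak flatness at $x_0$. The "moreover" clause is immediate: given any prescribed small $U_0$, one works inside the chart rescaled by the quasiconformal map, and the identity $\varphi(x_0)=0$ can always be arranged by postcomposing $\varphi$ with a M\"obius self-map of ${\Bbb B}^n$ (itself quasiconformal), so the chart neighborhood may be chosen arbitrarily small around $x_0$.

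The technical obstacle is the explicit model estimate: one has to justify that the $n$-modulus of curves joining two continua reaching $\partial\mathbb{B}^n$ and $\partial B(0,r)$ inside the half-ball truly diverges as $r\to 0$, despite the boundary $\Pi$ being allowed to intersect both continua. The reflection trick removes this difficulty by turning the half-ball problem into a standard Teichm\"uller-ring / capacity estimate in the full ball, but one must be careful that the modulus of the reflected family dominates (a constant multiple of) the original family; this is the step that genuinely uses $\alpha'\geqslant 1$ style hypotheses implicit in the Euclidean setting and is where the argument of \cite[Theorem~17.10]{Va} is invoked.
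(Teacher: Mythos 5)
Your overall strategy --- transfer to the model half-ball by the chart $\varphi$, prove a divergent modulus lower bound there, and pull back via the quasi-invariance of the modulus --- is precisely the argument of \cite[Theorem~17.10]{Va}, which is all the paper itself offers (it declares the proof to ``completely repeat'' that theorem and omits it). So the route is the intended one. However, your key model estimate is misstated in a way that makes it false. You ask for $M_n(\Gamma(E^{\,\prime},F^{\,\prime},{\Bbb B}^n_+))\geqslant P$ for continua that ``respectively meet $\partial{\Bbb B}^n$ and $\partial B(0,r)$,'' i.e.\ one continuum touching the outer sphere and the other the inner one. Under that hypothesis no bound of the form $c_n\log(1/r)$ can hold: take $E^{\,\prime}$ a radial segment of length $\varepsilon$ issuing from $S(0,1)$ and $F^{\,\prime}$ a radial segment of length $\varepsilon$ issuing from $S(0,r)$; the modulus of the joining family is of order $\left(\log(1/\varepsilon)\right)^{1-n}$ and tends to $0$ as $\varepsilon\rightarrow 0$ for every fixed $r$. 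The estimate you actually need --- and the one the definition of weak flatness hands you, since there \emph{each} of $E$ and $F$ is required to intersect \emph{both} $\partial U$ and $\partial V$ --- is for two connected sets each of which joins $S(0,r)$ to $S(0,1)$ after transfer (extract from $E\cap U$ a component spanning from $\partial V$ to $\partial U$, and likewise for $F$). Only in that configuration does the Loewner/ring estimate $M_n\geqslant c_n\log(1/r)$, and its half-space analogue with the extra factor $1/2$, apply. The repair is purely a matter of restoring the correct hypothesis, but as written the central quantitative step fails.

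Two smaller remarks. First, the difficulty you flag at the end is real but is not resolved by ``being careful with the reflection'': reflecting the continua is harmless (the reflection is an isometry and preserves modulus exactly --- the factor $2$ comes from symmetrizing the admissible function, not from the map), whereas the genuine issue is that a curve of the full-ball family may spend positive length in the hyperplane $\Pi$, where an admissible $\rho$ for the open half-ball family is uncontrolled. V\"ais\"al\"a's own treatment circumvents this by integrating spherical-cap (hemisphere) estimates over the radii $t\in(r,1)$ rather than by naive symmetrization; since both you and the paper ultimately defer this point to \cite{Va}, I only note that the deferral is load-bearing. Second, your final chain $M_{\alpha}(\Gamma(E,F,D))\geqslant M_{\alpha}(\Gamma(E,F,D\cap U))$ is correct, but it follows from monotonicity under inclusion of subfamilies (every curve of the smaller family belongs to the larger one), which is the degenerate case of~(\ref{eq32*A}); the nontrivial use of minorization occurs one step earlier, when passing to the spanning subcomponents inside $U$.
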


\medskip
The following statement holds.

\medskip
\begin{lemma}\label{lem2}{\sl\,Let $D^{\,\prime}$ be a bounded regular domain in $X$
with a finite Hausdorff dimension $\alpha^{\,\prime}$ which is
finitely connected at the boundary and, besides that,
$\overline{D^{\,\prime}}$ is compact. Assume that $X^{\,\prime}$ is
complete and supports $\alpha^{\,\prime}$-Poincar\'{e} inequality,
and that the measure $\mu^{\,\prime}$ is doubling. Let $h$ be a
quasiconformal mapping of $D^{\,\prime}$ onto $D_0\subset {\Bbb
R}^n,$ where $D_0$ is a domain with a locally quasiconformal
boundary. If $P\in E_{D^{\,\prime}},$ then $h(P)\in E_{D_0}.$}
\end{lemma}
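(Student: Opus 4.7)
The plan is to show that for $P=[E_k]\in E_{D^{\,\prime}}$ the image sequence $\{h(E_k)\}$ is a prime-end chain in $D_0.$ The main tool will be Theorem~\ref{th1A} applied to $h^{\,-1}:D_0\to D^{\,\prime};$ all of its hypotheses hold, since $D_0$ has weakly flat boundary by Proposition~\ref{pr2A}, $\overline{D_0}$ is compact in ${\Bbb R}^n,$ $D^{\,\prime}$ is bounded and finitely connected at the boundary, and the Poincar\'e and doubling assumptions on the ambient of $D^{\,\prime}$ are given. The map $h^{\,-1}$ is open, discrete, and closed (being a homeomorphism), and satisfies the inverse Poletsky inequality with constant weight $Q\equiv K,$ because $\Gamma_{h^{-1}}(y_0,r_1,r_2)=h(\Gamma^{\,*})$ for $\Gamma^{\,*}:=\Gamma(S(y_0,r_1),S(y_0,r_2),A(y_0,r_1,r_2))\subset D^{\,\prime},$ and the quasiconformal bound~(\ref{eq1}) for $h$ combined with the admissibility of $\rho(y)=\eta(d^{\,\prime}(y,y_0))\chi_{A(y_0,r_1,r_2)}(y)$ for $\Gamma^{\,*}$ yields
\begin{equation*}
M_n(\Gamma_{h^{-1}}(y_0,r_1,r_2))\leqslant K\cdot M_{\alpha^{\,\prime}}(\Gamma^{\,*})\leqslant K\int\limits_{D^{\,\prime}\cap A(y_0,r_1,r_2)}\eta^{\alpha^{\,\prime}}(d^{\,\prime}(y,y_0))\,d\mu^{\,\prime}(y).
\end{equation*}
Theorem~\ref{th1A} therefore produces a continuous surjection $\overline{h^{\,-1}}:\overline{D_0}\to\overline{D^{\,\prime}}_P.$

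With this extension in hand we verify the chain axioms for $\{h(E_k)\}.$ Each $h(E_k)$ is open, connected, and bounded; for acceptability, given $z\in\overline{E_k}\cap\partial D^{\,\prime}$ and $x_n\in E_k$ with $x_n\to z,$ a subsequence satisfies $h(x_n)\to y\in\overline{D_0}$ by compactness of $\overline{D_0},$ and $y\in D_0$ is impossible (else $x_n\to h^{\,-1}(y)\in D^{\,\prime},$ contradicting $z\in\partial D^{\,\prime}$), so $y\in\overline{h(E_k)}\cap\partial D_0.$ The same idea shows $\bigcap_k\overline{h(E_k)}\subset\partial D_0,$ and the inclusion $h(E_{k+1})\subset h(E_k)$ is automatic. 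The key step is the distance condition ${\rm dist}\,(D_0\cap\partial h(E_{k+1}),\,D_0\cap\partial h(E_k))>0$ in ${\Bbb R}^n.$ Suppose the contrary, and pick $a_n\in A_{k+1}:=D^{\,\prime}\cap\partial E_{k+1},\ b_n\in A_k:=D^{\,\prime}\cap\partial E_k$ with $|h(a_n)-h(b_n)|\to 0$ and a subsequence for which $h(a_n),h(b_n)\to y\in\overline{D_0}.$ If $y\in D_0,$ continuity of $h^{\,-1}$ forces $a_n,b_n\to h^{\,-1}(y)\in D^{\,\prime},$ contradicting ${\rm dist}\,(A_{k+1},A_k)>0.$ If $y\in\partial D_0,$ the extension gives $\overline{h^{\,-1}}(y)=P^{\,*}\in E_{D^{\,\prime}},$ whose impression is a singleton $\{p^{\,*}\}$ by Proposition~\ref{pr3}; continuity in the metric $m_P$ of Proposition~\ref{pr2}, applied to $a_n=\overline{h^{\,-1}}(h(a_n))\to P^{\,*},$ then forces $a_n,b_n\to p^{\,*}$ in $X,$ again a contradiction.

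To see that $[h(E_k)]$ is prime, let a chain $\{F_l\}\subset D_0$ divide $\{h(E_k)\}.$ Then $\{h^{\,-1}(F_l)\}\subset D^{\,\prime}$ satisfies all the chain conditions by the symmetric argument --- applying Theorem~\ref{th1A} to $h$ itself (which is likewise quasiconformal, and $\partial D^{\,\prime}$ inherits weak flatness from $\partial D_0$ by quasi-invariance of the modulus) supplies the continuous boundary extension needed for the symmetric distance step --- and it divides $\{E_k\}.$ Primeness of $P=[E_k]$ then yields $\{h^{\,-1}(F_l)\}\sim\{E_k\},$ and hence $\{F_l\}\sim\{h(E_k)\},$ so $h(P):=[h(E_k)]\in E_{D_0}.$ The main obstacle is the distance step above: without the continuous boundary extension from Theorem~\ref{th1A} and the singleton-impression property from Proposition~\ref{pr3}, boundary-approaching sequences in $D^{\,\prime}$ that remain $X$-separated could a priori collapse under $h,$ and the chain structure of the image would fail.
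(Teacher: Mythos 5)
Your construction of the image chain is sound and takes a genuinely different route from the paper for the key distance step. You derive ${\rm dist}\,(D_0\cap\partial h(E_{k+1}), D_0\cap\partial h(E_k))>0$ from the boundary extension $\overline{h^{\,-1}}:\overline{D_0}\rightarrow\overline{D^{\,\prime}}_P$ (Theorem~\ref{th1A} applied to $h^{\,-1},$ whose hypotheses do hold) together with the singleton--impression property of Proposition~\ref{pr3}. The paper instead runs a pure modulus argument: the positive distance in $D^{\,\prime}$ and the finiteness of $\mu^{\,\prime}(D^{\,\prime})$ give $M_{\alpha^{\,\prime}}(\Gamma)<\infty$ via an explicit admissible function, while a common limit point of the two image sets together with weak flatness of $\partial D_0$ (Proposition~\ref{pr2A} and~\cite[Theorem~17.10]{Va}) forces $M_n(h(\Gamma))=\infty,$ contradicting quasiconformality. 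Either argument closes this step.

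The genuine gap is in your primeness argument. To pull a dividing chain $\{F_l\}$ back to a chain $\{h^{\,-1}(F_l)\}$ you need the distance condition in $D^{\,\prime},$ and for that you apply Theorem~\ref{th1A} to $h$ itself, asserting that ``$\partial D^{\,\prime}$ inherits weak flatness from $\partial D_0$ by quasi-invariance of the modulus.'' This is false in general: the paper's own Example~\ref{ex1}, $D^{\,\prime}={\Bbb D}\setminus[0,1),$ is a bounded regular domain finitely connected at the boundary, yet at an interior point of the slit the family of paths joining two continua on opposite sides of the slit has modulus bounded above (each such path must travel around the tip $0,$ so its length is bounded below while the area is finite), so $\partial D^{\,\prime}$ is not weakly flat there and Theorem~\ref{th1A} cannot be applied to $h.$ The paper avoids divisibility entirely: it checks that $h$ satisfies the ring estimate~(\ref{eq1G}) at every $x_0\in\partial D^{\,\prime}$ and invokes~\cite[Theorem~1.1]{Sev$_1$} --- which requires only strong accessibility of the \emph{target} boundary $\partial D_0,$ guaranteed by Proposition~\ref{pr2A} --- to extend $h$ to a continuous map $\overline{D^{\,\prime}}_P\rightarrow\overline{D_0};$ this makes $I(h(P))$ a singleton, and primeness then follows from~\cite[Proposition~7.1]{ABBS}. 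As written, your divisibility route does not close; you need either this forward extension of $h$ or another argument establishing that the impression of $[h(E_k)]$ is a single point.
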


\medskip
\begin{proof}
Let $P=\left\{E_k\right\}_{k=1}^{\infty},$ where $E_k,$
$k=1,2,\ldots ,$ is a corresponding chain of acceptable sets, so
that $E_k$ is connected set such that $E_k \varsubsetneq
D^{\,\prime},$ $\overline{E_k}\cap\partial
D^{\,\prime}\ne\varnothing,$ in addition,

\medskip
1. $E_{k+1}\subset E_k$ for all $k=1, 2,\ldots,$

2. ${\rm dist}\,(D^{\,\prime}\cap \partial E_{k+1}, D^{\,\prime}\cap
\partial E_k)>0$ for all $k=1, 2,\ldots,$

3. $I(P):=\bigcap\limits_{k=1}^{\infty}\overline{E_k}\subset
\partial D^{\,\prime}.$\label{prop3}

\medskip
Let us show, first of all, that the above homeomorphism $h$
preserves the mentioned properties 1--3. Since $h$ is a
homeomorphism, $h(E_k)$ is connected for any $k=1,2,\ldots $ (see
Theorem~3 in \cite[Ch.~5, $\S\,$46, item I]{Ku}), $h(E_k)\ne D_0$
and $\overline{h(E_k)}\cap\partial D_0\ne\varnothing$ (see
\cite[Proposition~13.5]{MRSY}). The relations $h(E_{k+1})\subset
h(E_k),$ $k=1, 2,\ldots,$ are obvious. Let us to prove that
\begin{equation}\label{eq4E}
{\rm dist}\,(D_0\cap
\partial h(E_{k+1}), D_0\cap \partial h(E_k))>0 \quad {\rm for\,\,\,any\,\,\,} k=1, 2,\ldots
\,.
\end{equation}
Suppose the opposite, namely, let
\begin{equation}\label{eq3D}
{\rm dist\,}(D_0\cap
\partial h(E_k), D_0\cap \partial h(E_{k+1}))=0\,.
\end{equation}
Set
$$\rho(x)=\begin{cases}
\frac{1}{{\rm dist\,}(D^{\,\prime}\cap \partial E_k, D^{\,\prime}\cap \partial E_{k+1})}\,,& x\in D^{\,\prime}\,,\\
0\,,&{\rm otherwise}\,.
\end{cases}
$$
Let $\Gamma=\Gamma(D^{\,\prime}\cap \partial E_k, D^{\,\prime}\cap
\partial E_{k+1}, D^{\,\prime}).$ Obviously that, $\rho\in {\rm adm}\,\Gamma$
and by the definition of modulus in~(\ref{eq13.5})
\begin{equation}\label{eq1D}
M_{\alpha^{\,\prime}}(\Gamma)\leqslant \frac{1}{\left({\rm
dist\,}(D^{\,\prime}\cap
\partial E_k, D^{\,\prime}\cap \partial
E_{k+1})\right)^{\alpha^{\,\prime}}}\cdot\mu(D^{\,\prime})<\infty\,.
\end{equation}
Here we took into account that $\overline{D^{\,\prime}}$ is compact
and $\mu(B)>0$ for any ball $B\subset X.$ On the other hand, since
$h$ is a homeomorphism we obtain that
\begin{equation}\label{eq2A}
M_n(\Gamma(D_0\cap \partial h(E_k), D_0\cap
\partial h(E_{k+1}), D_0))=M_n(h(\Gamma))\,.
\end{equation}
It follows from~(\ref{eq3D}) that there are $x_m\in D_0\cap \partial
h(E_k)$ and $y_m\in D_0\cap
\partial h(E_{k+1})$ such that $d(x_m, y_m)\rightarrow 0$ as
$m\rightarrow \infty.$ Since $D_0$ is bounded, we may consider that
$x_m\rightarrow x_0\in \overline{D_0\cap \partial h(E_k)}$ as
$m\rightarrow\infty$ and $y_m\rightarrow x_0\in \overline{D_0}$ as
$m\rightarrow\infty.$ Now, by the triangle inequality, $d(x_0,
y_0)\leqslant d(x_0, x_m)+d(x_k, y_m)+d(y_m, y_0)\rightarrow 0$ as
$m\rightarrow\infty.$ Thus, $x_0=y_0\in \overline{D_0\cap \partial
h(E_k)}\cap \overline{D_0\cap
\partial h(E_{k+1})}.$ By Proposition~\ref{pr2A}, $D_0$ has a weakly
flat boundary. By~\cite[Theorem~17.10]{Va}, the modulus of families
of paths joining the sets with a common point in a domain with a
weakly flat boundary equals to infinity. Thus, by~(\ref{eq2A})
\begin{equation}\label{eq1E}
M_n(h(\Gamma))=\infty\,.
\end{equation}
The relation~(\ref{eq1E}) contradicts with~(\ref{eq1D}), because, by
the quasiconformality of $h,$ we obtain that
$$M_n(h(\Gamma))\leqslant K\cdot M_{\alpha}(\Gamma)<\infty\,.$$
Thus, (\ref{eq4E}) holds, as required.

\medskip
It remains to show the validity of property 3 on a
page~\pageref{prop3} for the mapped family of acceptable
sets~$h(E_k),$ $k=1,2,\ldots .$ Let $y\in
\bigcap\limits_{k=1}^{\infty}\overline{h(E_k)},$ then $y\in
\overline{h(E_k)}$ for any $k\in {\Bbb N}.$ Now,
$y=\lim\limits_{l\rightarrow \infty} y^k_l,$ where $y^k_l=h(x^k_l),$
and $x^k_l\in E_k,$ $l=1,2,\ldots .$ Therefore, for any $k\in {\Bbb
N}$ there is a number $l_k,$ $k=1,2,\ldots ,$ such that
$|h(x^k_{l_k})-y|<1/2^k.$ Since $\overline{E_1}$ is compact (see
\cite{ABBS}), we may consider that $x^k_{l_k}$ converge to some
$x_0\in \overline{E_1}$ as $k\rightarrow\infty.$ Thus, $y\in
\partial D_0$ (see \cite[Proposition~13.5]{MRSY}) and, consequently,
$\bigcap\limits_{k=1}^{\infty}\overline{h(E_k)}\subset \partial
D_0.$ Thus, the chain of cuts $h(E_k),$ $k=1,2\ldots ,$ defines some
end $h(P).$

\medskip
Finally, let us show that the set $I(P)$ consists of exactly one
point. To do this, we show that the mapping $h$ is the so-called
ring $Q$-mapping with $Q:=K$ (see, e.g., \cite{Sev$_1$}). Let $0<
r_1<r_2<\infty$ and let $\eta:(r_1, r_2)\rightarrow [0, \infty]$ be
a Lebesgue measurable function  such that
$\int\limits_{r_1}^{r_2}\eta(r)\,dr\geqslant 1,$ let $x_0\in
\partial D^{\,\prime},$ $S_i=S(x_0, r_i)$ and let $A=A(x_0, r_1, r_2)=\{x\in X:
r_1<d(x, x_0)<r_2\}. $ Put $\Gamma=\Gamma(S_1, S_2, A\cap
D^{\,\prime}).$ Set
$$\rho(x)=\begin{cases}\eta(d(x, x_0)),& x\in A\cap D^{\,\prime}, \\
0\,,&{\rm otherwise}
 \end{cases}\,.$$
Let $\gamma$ be a locally rectifiable path in $\Gamma.$ Then
by~\cite[Proposition~13.4]{MRSY}
$$\int\limits_{\gamma}\rho(x)\,|dx|\geqslant \int\limits_{r_1}^{r_2}\eta(r)\,dr\geqslant 1\,.$$
So, $\rho\in {\rm adm}\,\Gamma$ and, consequently, by the definition
of $h$ in~(\ref{eq1}) and by the definition of the modulus of
families of paths in~(\ref{eq13.5})
\begin{equation}\label{eq1G}
M_n(h(\Gamma))\leqslant K\cdot
M_{\alpha^{\,\prime}}(\Gamma)\leqslant\int \limits_A
K\eta^{\alpha^{\,\prime}}(d(x, x_0))\,d\mu(x)\,.
\end{equation}
The latter means that $h$ is a ring $K$-mapping at $x_0,$ as
required. Since $D_0$ has a quasiconformal boundary, $\partial D_0$
is weakly flat by Proposition~\ref{pr2A} and, therefore, is strongly
accessible (see e.g. \cite[Proposition~13.6]{MRSY}). Besides that,
$\overline{D_0}$ is a compactum by the assumption of the lemma.
Then, by~\cite[Theorem~1.1]{Sev$_1$} $h$ has a continuous extension
$h:\overline{D^{\,\prime}}_P\rightarrow \overline{D_0}.$

\medskip
Let now $y\in
I(h(P)):=\bigcap\limits_{k=1}^{\infty}\overline{h(E_k)},$ then by
proving above $y=\lim\limits_{k\rightarrow\infty}h(x^k_{l_k}),$
where $x^k_{l_k}$ is some sequence converging  to $P$ as
$k\rightarrow\infty.$ Since $h$ has a continuous extension
$h:\overline{D^{\,\prime}}_P\rightarrow \overline{D_0},$ $y$ is a
one-point set, as required. Now, $h(P)$ is a prime end (see
e.g.~\cite[Proposition~7.1]{ABBS}).~$\Box$
\end{proof}

\medskip
Let $D^{\,\prime}$ be a domain in a locally connected space $X$ and
let $a\in D^{\,\prime}.$ Then we may define a sequence $V_k,$
$k=1,2,\ldots,$ of neighborhoods of a point $a$ such that
$V_{k+1}\subset V_k,$ ${\rm dist}\,(\partial V_{k+1}, \partial
V_k)>0$ and $\bigcap\limits_{k=1}^{\infty}\overline{E_k}=a.$ Two
arbitrary such sequences $\{V_k\}_{k=1}^{\infty}$ and
$\{U_k\}_{k=1}^{\infty}$ will be considered equivalent. In what
follows, by a ''prime end'' corresponding to the point $a,$ we mean
the equivalence class of the sequences of ''admissible sets'' $V_k,$
$k=1,2,\ldots .$ indicated above. Let us establish the following
statement (see, for example, \cite[Lemma~2.1]{ISS}).

\medskip
\begin{lemma}\label{lem1}{\sl\, Let $D^{\,\prime}$ be a bounded regular domain in $X^{\,\prime}$
with a finite Hausdorff dimension $\alpha^{\,\prime}$ which is
finitely connected at the boundary. Assume that $X^{\,\prime}$ is
complete and supports $\alpha^{\,\prime}$-Poincar\'{e} inequality,
and that the measure $\mu$ is doubling. Let $x_m\rightarrow P_1,$
$y_m\rightarrow P_2$ as $m\rightarrow\infty,$ $P_1, P_2\in
\overline{D^{\,\prime}}_P,$ $P_1\ne P_2.$ Suppose that $d_m, g_m,$
$m=1,2,\ldots,$ are sequences of decreasing domains, corresponding
to $P_1$ and $P_2,$ $d_1\cap g_1=\varnothing,$ and $x_0, y_0\in
D^{\,\prime}\setminus (d_1\cup g_1).$ Then there are arbitrarily
large $k_0\in {\Bbb N},$ $M_0=M_0(k_0)\in {\Bbb N}$ and
$0<t_1=t_1(k_0), t_2=t_2(k_0)<1$ for which the following condition
is fulfilled: for each $m\geqslant M_0$ there are
non-intersecting paths
$$\gamma_{1,m}(t)=\quad\left\{
\begin{array}{rr}
\widetilde{\alpha}(t), & t\in [0, t_1],\\
\widetilde{\alpha_m}(t), & t\in [t_1, 1]\end{array}
\right.\,,\quad\gamma_{2,m}(t)=\quad\left\{
\begin{array}{rr}
\widetilde{\beta}(t), & t\in [0, t_2],\\
\widetilde{\beta_m}(t), & t\in [t_2, 1]\end{array}\,, \right.$$
such that:

1) $\gamma_{1, m}(0)=x_0,$ $\gamma_{1, m}(1)=x_m,$ $\gamma_{2,
m}(0)=y_0$ and $\gamma_{2, m}(1)=y_m;$

2) $|\gamma_{1, m}|\cap \overline{g_{k_0}}=\varnothing=|\gamma_{2,
m}|\cap \overline{d_{k_0}};$

3) $\widetilde{\alpha_m}(t)\in d_{k_0+1}$ for $t\in [t_1, 1]$ and
$\widetilde{\beta_m}(t)\in g_{k_0+1}$ for $t\in [t_2, 1]$ (see
Figure~\ref{fig3}).
\begin{figure}[h]
\centering\includegraphics[scale=0.7]{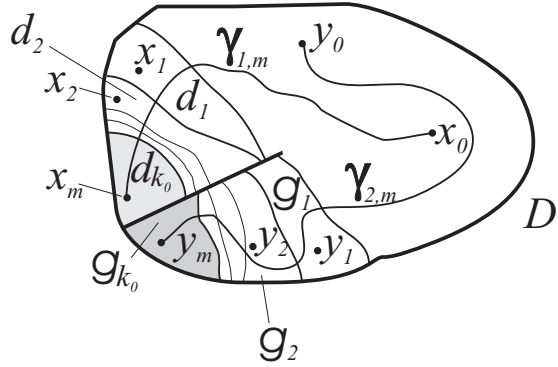} \caption{To the
statement of Lemma~\ref{lem1}}\label{fig3}
\end{figure}
}
\end{lemma}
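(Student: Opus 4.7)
The plan is to reduce the construction to a Euclidean setting via the quasiconformal map $h:D^{\,\prime}\to D_0\subset {\Bbb R}^n$ guaranteed by regularity of $D^{\,\prime}$ (with $D_0$ having locally quasiconformal boundary), and then to invoke the corresponding Euclidean result \cite[Lemma~2.1]{ISS}.

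First, I would transfer the data to $D_0.$ By Lemma~\ref{lem2}, when $P_i\in E_{D^{\,\prime}}$ the chain $\{h(d_m)\}$ (respectively $\{h(g_m)\}$) is a chain of acceptable sets in $D_0$ defining the prime end $h(P_1)\in E_{D_0}$ (respectively $h(P_2)\in E_{D_0}$), and the nesting, separation (cf.~(\ref{eq4E})) and impression conditions all follow from the proof of Lemma~\ref{lem2}. When instead $P_i$ is an interior point, the same conclusions are immediate since $h$ is a homeomorphism. In either case the hypotheses $d_1\cap g_1=\varnothing$ and $P_1\ne P_2$ persist in $D_0,$ so the images $h(d_m), h(g_m)$ form two disjoint decreasing chains corresponding to distinct limit objects in $\overline{D_0}.$

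Next, I would apply \cite[Lemma~2.1]{ISS} inside $D_0.$ Since $D_0$ is a bounded domain in ${\Bbb R}^n$ with locally quasiconformal boundary, that lemma yields an arbitrarily large $k_0\in {\Bbb N},$ an $M_0=M_0(k_0)\in {\Bbb N}$ and parameters $0<t_1, t_2<1$ such that, for every $m\geqslant M_0,$ there exist non-intersecting paths $\gamma^{\,*}_{1,m}, \gamma^{\,*}_{2,m}$ in $D_0$ which have exactly the concatenated form and the properties (1)--(3) asserted in the present statement, with $h(x_0), h(y_0), h(x_m), h(y_m), h(d_{k_0}), h(d_{k_0+1}), h(g_{k_0}), h(g_{k_0+1})$ put in place of $x_0, y_0, x_m, y_m, d_{k_0}, d_{k_0+1}, g_{k_0}, g_{k_0+1}.$ Pulling back via $h^{\,-1}$ then yields the required curves in $D^{\,\prime}$: define $\widetilde{\alpha}, \widetilde{\alpha_m}, \widetilde{\beta}, \widetilde{\beta_m}$ as the $h^{\,-1}$-images of the analogous Euclidean segments, and concatenate.

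The main technical obstacle I anticipate is checking that the three listed properties are invariant under the pullback by $h^{\,-1}.$ Since $h$ is a homeomorphism of $D^{\,\prime}$ onto $D_0,$ it sends the sets $d_{k_0+1}, g_{k_0+1}$ and $\overline{d_{k_0}}\cap D^{\,\prime}, \overline{g_{k_0}}\cap D^{\,\prime}$ bijectively to their $h$-images; the only place where subtlety could arise is near $\partial D^{\,\prime},$ but the paths in question lie entirely in the open domain $D^{\,\prime}$ (their endpoints $x_0, y_0, x_m, y_m$ belong to $D^{\,\prime}$), so the verification reduces to the corresponding properties in $D_0$ that are built into the application of \cite[Lemma~2.1]{ISS}. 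Reparametrization to fit the prescribed intervals $[0, t_1], [t_1, 1]$ (respectively $[0, t_2], [t_2, 1]$) is routine.
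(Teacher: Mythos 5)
Your proposal is correct in substance and shares the paper's central strategic move: conjugate everything by the quasiconformal map $h:D^{\,\prime}\rightarrow D_0$ onto a Euclidean domain with locally quasiconformal boundary, do the construction there, and pull back by $h^{\,-1}.$ The difference is that you treat \cite[Lemma~2.1]{ISS} as a black box inside $D_0,$ whereas the paper does not cite that lemma but re-runs its proof in $D_0$: it uses Lemma~\ref{lem2} to see that $h(P_1), h(P_2)$ are prime ends of $D_0$ with singleton impressions $a\ne b,$ joins $h(x_0)$ to $a$ and $h(y_0)$ to $b$ by disjoint paths via Proposition~\ref{pr1}, separates them by disjoint open neighborhoods $U, V,$ shows $\overline{h(d_k)}\subset U$ and $\overline{h(g_k)}\subset V$ for large $k,$ and then proves the key intersection fact $|\alpha|\cap h(d_{k_0+1})\ne\varnothing$ by exploiting the locally quasiconformal boundary structure of $D_0$ and the uniqueness of the prime end with impression $a.$ In your version that last step is hidden inside the cited lemma; what you must supply instead is the verification that the transported data meet the hypotheses of \cite[Lemma~2.1]{ISS}, namely that $h(d_m), h(g_m)$ are genuine decreasing chains representing $h(P_1), h(P_2)$ (which you correctly delegate to Lemma~\ref{lem2}, including the separation property~(\ref{eq4E})), that $h(d_1)\cap h(g_1)=\varnothing,$ and that $h(x_0), h(y_0)$ lie outside $h(d_1)\cup h(g_1)$ --- all immediate from injectivity of $h.$ Your treatment of the pullback of property~2) is also the right one: since the paths lie entirely in the open domain, $|\gamma_{1,m}|\cap\overline{g_{k_0}}=|\gamma_{1,m}|\cap(\overline{g_{k_0}}\cap D^{\,\prime})$ and $h(\overline{g_{k_0}}\cap D^{\,\prime})=\overline{h(g_{k_0})}\cap D_0,$ which is exactly the two-case argument the paper spells out. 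The one caveat for your route is that the paper itself signals that its definition of a regular domain is ``slightly different from that given in~\cite{ISS}'' and that it uses a ``slightly modified scheme'' of the proof rather than the statement; so before citing \cite[Lemma~2.1]{ISS} verbatim for $D_0$ you should confirm that its hypotheses and conclusion really match (in particular that it covers the case where one of $P_1, P_2$ is an interior point and that it delivers the fixed-head/variable-tail concatenated form with $\widetilde{\alpha}$ independent of $m$). If it does, your argument is a legitimate and shorter alternative; if not, you would need to restore precisely the in-$D_0$ construction that the paper carries out.
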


\begin{proof}
Let $P_1, P_2\in E_D^{\,\prime}.$ Since space $X^{\,\prime}$ is
complete and supports a $\alpha^{\,\prime}$-Poincar\'{e} inequality,
$X^{\,\prime}$ is proper and locally connected (see
\cite[Section~2]{ABBS} and \cite[Proposition~3.1]{BB}). Then, by
Remarks~2.6 and~4.5 in \cite{ABBS} we may consider that the sets
$d_k$ and $g_k,$ $k=1,2,\ldots ,$ are open and path connected. If
$P_1\in D^{\,\prime}$ or $P_2\in D^{\,\prime},$ then the sets $d_k$
and $g_k,$ $k=1,2,\ldots ,$ are well defined, in addition, we may
consider that $d_k$ and $g_k$ are open and path connected, as well.

\medskip
We use a slightly modified scheme for the proof of Lemma~2.1
in~\cite{ISS}. Since, by condition, $D^{\,\prime}$ is a regular
domain, it can be mapped onto some domain with a locally
quasiconformal boundary by (some) quasiconformal mapping
$h:D^{\,\prime}\rightarrow D_0.$ Note that the domain $D_0$ is
locally connected on its boundary, which follows directly from the
definition of local quasiconformality. Let $P_1, P_2\in
E_D^{\,\prime}.$ Then by Lemma~\ref{lem2} $h(P_1)$ and $h(P_2)$ are
prime ends in $E_{D_0}.$ Observe that $I(P_1)$ and $I(P_2)$ are
different points $a$ and $b$ in $\partial D_0$ whenever $P_1\ne P_2$
(see e.g. \cite[Lemma~4.1]{Sev$_1$}). If $P_1$ or $P_2$ are inner
points of $D^{\,\prime},$ then $h(P_1)$ (or $h(P_2)$) are inner
points of $D_0,$ which we denote by $a$ or $b,$ respectively. Since
by assumption $x_0, y_0\in D^{\,\prime}\setminus (d_1\cup g_1),$
then, in particular, $P_1\ne x_0\ne P_2,$ $P_1\ne y_0\ne P_2.$ This
implies that $a, b, h(x_0)$ and $h(y_0)$ are four different points
in $\overline{D_0},$ at least two of which are inner points of $D_0$
(see Figure~\ref{fig4}).
\begin{figure}[h]
\centering\includegraphics[width=300pt]{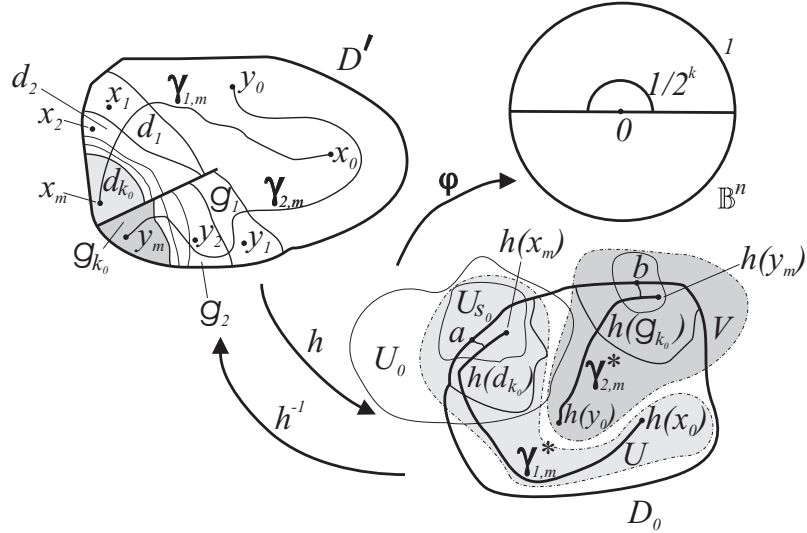} \caption{To
the proof of Lemma~\ref{lem1}}\label{fig4}
\end{figure}
By Proposition~\ref{pr1}, one can join pairs of points $a, h(x_0)$
and $b, h(y_0)$ by disjoint paths $\alpha:[0, 1]\rightarrow
\overline{D_0}$ and $\beta:[0, 1]\rightarrow \overline{D_0}$ so that
$|\alpha|\cap |\beta|=\varnothing,$ $\alpha(t),\beta(t)\in D_0$ for
all $t\in (0, 1),$ $\alpha(0)=h(x_0),$ $\alpha(1)=a,$
$\beta(0)=h(y_0)$ and $\beta(1)=b.$ Since ${\Bbb R}^n$  is a normal
topological space, the loci $|\alpha|$ and $|\beta|$ have
non-intersecting open neighborhoods $U, V$ such that
\begin{equation}\label{eq3B}
|\alpha|\subset U,\quad |\beta|\subset V\,.
\end{equation}
Here two cases are possible: either $h(P_1)$ is a prime end in
$E_{D_0},$ or a point in  $D_0.$ Let $h(P_1)$ be a prime end in
$E_{D_0}.$ Since $I(h(P_1))=a,$ then there is a number $k_1\in {\Bbb
N}$ such that $\overline{h(d_k)}\subset U$ for $k\geqslant k_1.$ If
$h(P_1)$ is a point of $D_0,$ then there is also a number $k_1\in
{\Bbb N}$ such that $\overline{h(d_k)}\subset U$ for all $k\geqslant
k_1.$ In either of these two cases, $\overline{h(d_k)}\subset U$ for
$k\geqslant k_1.$ Similarly, there is a number $k_2\in {\Bbb N}$
such that $\overline{h(g_k)}\subset V$ for all $k\geqslant k_2.$
Then for $k_0:=\max\{k_1, k_2\}$ we obtain that
\begin{equation}\label{eq3}
\overline{h(d_k)}\subset U\,,\quad \overline{h(g_k)}\subset V\,,
\quad U\cap V=\varnothing\,, \quad k\geqslant k_0\,.
\end{equation}
Since the sequence $x_m$ converges to $P_1$ as $m\rightarrow\infty,$
then $h(x_m)$ converges to $a.$ Therefore, there is a number $m_1\in
{\Bbb N}$ such that $h(x_m)\in h(d_{k_0+1})$ for $m\geqslant m_1.$
Similarly, since $y_m$ converges to $P_2$ as $m\rightarrow\infty,$
then $h(y_m)$ converges to $b.$ Therefore, there is a number $m_2\in
{\Bbb N}$ such that $h(y_m)\in h(g_{k_0+1})$ for $m\geqslant m_2.$
Put $M_0:=\max\{m_1, m_2\}.$ Show that
\begin{equation}\label{eq2}
|\alpha|\cap h(d_{k_0+1})\ne\varnothing,\qquad |\beta|\cap
h(g_{k_0+1})\ne\varnothing\,.
\end{equation}
It suffices to establish the first of these relations, since the
second relation can be proved similarly. If $a=h(P_1)$ is an inner
point of $D_0,$ then this inclusion is obvious. Now suppose that
$h(P_1)\in E_{D_0}.$ Since $D_0$ has a locally quasiconformal
boundary, there is a sequence of spheres $S(0,1/2^k),$
$k=0,1,2,\ldots,$ a decreasing sequence of neighborhoods $U_k$ of
the point $a$ and some quasiconformal mapping
$\varphi:U_0\rightarrow {\Bbb B}^n,$ for which $\varphi(U_k)=B(0,
1/2^k),$ $\varphi(\partial U_k\cap D_0)=S(0, 1/2^k)\cap {\Bbb
B}^n_+,$ where ${\Bbb B}^n_+=\{x=(x_1, \ldots, x_n): |x|<1, x_n>0\}$
(see the arguments given in the proof of~\cite[Lemma~3.5]{Na}). Note
that $U_k\cap D_0$ is a domain, since $U_k\cap
D_0=\varphi^{\,-1}(B_+(0, 1/2^k)),$ $B_+(0, 1/2^k)=\{x=(x_1, \ldots,
x_n): |x|<1/2^k, x_n>0\},$ and $\varphi$ is a homeomorphism.

\medskip
Observe that the sequence of domains $U_k\cap D_0$ corresponds to
some prime end, the impression of which is the point $a.$ Moreover,
since $D_0$ is a domain with a locally quasiconformal boundary,
$D_0$ is locally connected on the boundary and, therefore, the prime
end $h(P_1)$ with an impression $a$ a is unique (see
\cite[Corollary~10.14]{ABBS}). Therefore any domain $h(d_m)$
contains all domains $U_k\cap D_0,$ except for a finite number, and
vice versa. In particular, there is $s_0\in {\Bbb N}$ such that
$U_k\cap D_0\subset h(d_{k_0+1})$ for all $k\geqslant s_0.$ Since
$a\in |\alpha|,$ there is $t_1\in (0,1)$ such that
$p:=\alpha(t_1)\in U_{s_0}\cap D_0.$ But then also $p\in
h(d_{k_0+1}),$ since $U_{s_0}\cap D_0\subset h(d_{k_0+1}).$ The
first relation in~(\ref{eq2}) is proved. As we said above, the
second relation may be proved in exactly the same way.

\medskip
So, let $p:=\alpha(t_1)\in |\alpha|\cap h(d_{k_0+1}).$ Fix
$m\geqslant M_0$ and join the point $p$ with the point $h(x_m)$
using the path $\alpha_m:[t_1, 1]\rightarrow h(d_{k_0+1})$ so that
$\alpha_m(t_1)=p,$ $\alpha_m(1)=h(x_m),$ what is possible because
$h(d_{k_0+1})$ is a domain.
Set
\begin{equation}\label{eq10AA}
\gamma^{\,*}_{1,m}(t)=\quad\left\{
\begin{array}{rr}
\alpha(t), & t\in [0, t_1],\\
\alpha_m(t), & t\in [t_1, 1]\end{array} \right.\,.
\end{equation}

Note that the path $\gamma^{\,*}_{1,m}$ completely lies in $U.$
Reasoning similarly, we have the point $t_2\in (0, 1)$ and the point
$q:=\beta(t_2)\in |\beta|\cap h(g_{k_0+1}).$ Fix $m\geqslant M_0$
and join the point $q$ with the point $h(y_m)$ using the path
$\beta_m:[t_2, 1]\rightarrow h(g_{k_0+1})$ so that $\beta_m(t_2)=q,$
$\beta_m(1)=h(y_m),$ that is possible, because $h(g_{k_0+1})$ is a
domain.
Set
\begin{equation}\label{eq10B}
\gamma^{\,*}_{2,m}(t)=\quad\left\{
\begin{array}{rr}
\beta(t), & t\in [0, t_2],\\
\beta_m(t), & t\in [t_2, 1]\end{array} \right.\,.
\end{equation}
Note that the path $\gamma^{\,*}_{2,m}$ completely lies in $V.$ Set
\begin{equation}\label{eq11AA}
\gamma_{1,m}:=h^{\,-1}(\gamma^{\,*}_{1,m})\,,\quad
\gamma_{2,m}:=h^{\,-1}(\gamma^{\,*}_{2,m})\,.
\end{equation}
Note that the paths $\gamma_{1,m}$ and $\gamma_{2,m}$ satisfy all
the conditions of Lemma~\ref{lem1} for $m\geqslant M_0.$ In fact, by
definition, these paths join the points $x_m, x_0$ and $y_m, y_0,$
respectively. The paths $\gamma_{1,m}$ and $\gamma_{2,m}$ do not
intersect, since their images under the mapping $h$ belong to
non-intersecting neighborhoods $U$ and $V,$ respectively.

Note also that $|\gamma_{1,m}|\cap \overline{g_{k_0}}=\varnothing$
for $m\geqslant M_0.$ Indeed, if $x\in |\gamma_{1,m}|\cap
\overline{g_{k_0}},$ then either $x\in |\gamma_{1,m}|\cap g_{k_0}$
or $x\in |\gamma_{1,m}|\cap \partial g_{k_0}.$ In the first case, if
$x\in |\gamma_{1,m}|\cap g_{k_0}$ then $h(x)\in
|\gamma^{\,*}_{1,m}|\cap h(g_{k_0})\subset U\cap h(g_{k_0}),$ which
is impossible due to the relation~(\ref{eq3}). In the second case,
if $x\in |\gamma_{1,m}|\cap \partial g_{k_0},$ then there is a
sequence $z_m\in g_{k_0}$ such that $z_m\rightarrow x$ as
$m\rightarrow\infty.$ Now $h(z_m)\rightarrow h(x)$ as
$m\rightarrow\infty$ and, therefore, $h(x)\in
\overline{h(g_{k_0})}.$ At the same time, $h(x)\in U,$ and this is
impossible by virtue of relation~(\ref{eq3}). Thus, the relation
$|\gamma_{1,m}|\cap \overline{g_{k_0}}=\varnothing$ for $m\geqslant
M_0$ is established.

Similarly, $|\gamma_{2,m}|\cap \overline{d_{k_0}}=\varnothing$ for
$m\geqslant M_0.$ Finally, defining paths $\widetilde{\alpha},$
$\widetilde{\alpha}_m,$ $\widetilde{\beta}$ and
$\widetilde{\beta}_m$ by means of relations
$\widetilde{\alpha}=h^{\,-1}(\alpha),$
$\widetilde{\alpha}_m=h^{\,-1}(\alpha_m),$
$\widetilde{\beta}=h^{\,-1}(\beta)$ and
$\widetilde{\beta}_m=h^{\,-1}(\beta_m),$ we see that these paths
correspond to the construction of $\gamma_{1,m}$ and $\gamma_{2,m},$
and also satisfy conditions~3) from the formulation of the lemma.
Lemma~\ref{lem1} is proved.~$\Box$
\end{proof}

\medskip
Consider the family of paths joining $|\gamma_{1, m}|$ and
$|\gamma_{2, m}|$ in $D^{\,\prime}$ from the previous lemma. The
following statement contains the upper estimate of the modulus of
the transformed family of paths under the mapping $f$ with the
inequality~(\ref{eq2*A}).

\medskip
\begin{lemma}\label{lem4A}
{\sl Let $D$ and $D^{\,\prime}$ be domains with finite Hausdorff
dimensions $\alpha$ and $\alpha^{\,\prime}\geqslant 2$ in spaces
$(X,d,\mu)$ and $(X^{\,\prime},d^{\,\prime}, \mu^{\,\prime}),$
respectively. Assume that $X$ is locally connected, $\overline{D}$
is compact, $X^{\,\prime}$ is complete and supports
$\alpha^{\,\prime}$-Poincar\'{e} inequality, and that the measures
$\mu$ and $µ^{\,\prime}$ are doubling. Let $D^{\,\prime}\subset
X^{\,\prime}$ be a regular bounded domain which is finitely
connected at the boundary, and let $Q:X^{\,\prime}\rightarrow (0,
\infty)$ be integrable function in $D^{\,\prime},$ $Q(y)\equiv 0$
for $y\in X^{\,\prime}\setminus D^{\,\prime}.$ Suppose that
$f:D\rightarrow D^{\,\prime},$ $D^{\,\prime}=f(D),$ is an open
discrete and closed mapping satisfying the relation~(\ref{eq2*A}) at
any point $y_0\in \overline{D^{\,\prime}}.$ Then under conditions
and notation of Lemma~\ref{lem1} we may choose a number $k_0\in
{\Bbb N}$ for which there is $0<N=N(k_0, \Vert Q\Vert_1,
D^{\,\prime})<\infty,$ independent on $m$ and $f,$ such that
$$M_{\alpha}(\Gamma_m)\leqslant N,\qquad m\geqslant M_0=M_0(k_0)\,,$$
where $\Gamma_m$ is a family of paths $\gamma:[0, 1]\rightarrow D$
such that $f(\gamma)\in \Gamma(|\gamma_{1, m}|, |\gamma_{2, m}|,
D^{\,\prime}).$ }
\end{lemma}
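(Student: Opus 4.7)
I would apply the inverse Poletsky inequality~(\ref{eq2*A}) simultaneously at the two impressions $y_0 := I(P_1)$ and $y_2 := I(P_2)$; for definiteness assume $P_1, P_2 \in E_{D^{\,\prime}}$ (the case where either $P_i$ is an interior point of $D^{\,\prime}$ is identical after taking $y_i := P_i$). By Proposition~\ref{pr3} and~\cite[Lemma~4.1]{Sev$_1$}, the points $y_0$ and $y_2$ are distinct singletons in $\partial D^{\,\prime}$. From the construction in Lemma~\ref{lem1} the paths $\widetilde{\alpha}, \widetilde{\beta} : [0,1] \to \overline{D^{\,\prime}}$ are disjoint, satisfy $\widetilde{\alpha}(t), \widetilde{\beta}(t) \in D^{\,\prime}$ for $t \in [0,1)$, and end at $\widetilde{\alpha}(1) = y_0$, $\widetilde{\beta}(1) = y_2$. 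Hence
$$
\delta_1 := {\rm dist}\,(|\widetilde{\alpha}|, y_2) > 0, \qquad \delta_2 := {\rm dist}\,(|\widetilde{\beta}|, y_0) > 0.
$$
Fix $r_0 \in (0, \delta_2)$ and $s_0 \in (0, \delta_1)$ with $r_0 + s_0 < d^{\,\prime}(y_0, y_2)$, and enlarge the parameter $k_0$ in Lemma~\ref{lem1} until
$$
d_{k_0+1} \subset B(y_0, r_0/4), \quad g_{k_0+1} \subset B(y_2, s_0/4), \quad \overline{g_{k_0+1}} \cap B(y_0, r_0) = \varnothing, \quad \overline{d_{k_0+1}} \cap B(y_2, s_0) = \varnothing,
$$
which is possible because $\bigcap_k \overline{d_k} = \{y_0\}$ and $\bigcap_k \overline{g_k} = \{y_2\}$.

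Set $\sigma_1 := \widetilde{\alpha}|_{[0,t_1]}$ and $\sigma_2 := \widetilde{\beta}|_{[0,t_2]}$; these are disjoint compacta in $D^{\,\prime}$ with $|\sigma_1| \cap B(y_2, s_0) = \varnothing$ and $|\sigma_2| \cap B(y_0, r_0) = \varnothing$ by the choice of $r_0, s_0$. Because $|\gamma_{1,m}| = |\sigma_1| \cup |\widetilde{\alpha}_m|$ and $|\gamma_{2,m}| = |\sigma_2| \cup |\widetilde{\beta}_m|$, the family $\Gamma_m$ splits as
$$
\Gamma_m = \Gamma_m^{(\widetilde{\alpha}_m, \widetilde{\beta}_m)} \cup \Gamma_m^{(\widetilde{\alpha}_m, \sigma_2)} \cup \Gamma_m^{(\sigma_1, \widetilde{\beta}_m)} \cup \Gamma_m^{(\sigma_1, \sigma_2)},
$$
where $\Gamma_m^{(A,B)} := \{\gamma : f(\gamma) \in \Gamma(|A|, |B|, D^{\,\prime})\}$. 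By subadditivity of the $\alpha$-modulus it is enough to bound each of the four subfamilies.

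For each of the first three subfamilies, the separation conditions above force $f(\gamma)$ to start in a ball $B(y_0, r_0/4)$ or $B(y_2, s_0/4)$ and to end outside the corresponding larger ball $B(y_0, r_0)$ or $B(y_2, s_0)$. Repeating the argument that produced~(\ref{eq11}) in the proof of Theorem~\ref{th1A}, $f(\gamma)$ contains a subpath in $\Gamma(S(y_0, r_0/4), S(y_0, r_0), A(y_0, r_0/4, r_0))$ or its analogue at $y_2$, so the source subfamily is minorized, via~(\ref{eq32*A}), by $\Gamma_f(y_0, r_0/4, r_0)$ or $\Gamma_f(y_2, s_0/4, s_0)$. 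Applying~(\ref{eq2*A}) with $\eta = 4/(3r_0)$ on $[r_0/4, r_0]$ and zero elsewhere (respectively with $\eta = 4/(3 s_0)$ on $[s_0/4, s_0]$), each such modulus is bounded by $C(r_0, s_0, \alpha^{\,\prime}) \cdot \|Q\|_{L^1(D^{\,\prime})}$. The remaining piece $\Gamma_m^{(\sigma_1, \sigma_2)}$ is independent of $m$; setting $\rho := {\rm dist}\,(|\sigma_1|, |\sigma_2|) > 0$, I cover $|\sigma_1|$ by finitely many balls $B(w_1, \rho/4), \ldots, B(w_L, \rho/4)$, each satisfying $\overline{B(w_i, \rho/2)} \cap |\sigma_2| = \varnothing$, decompose $\Gamma_m^{(\sigma_1, \sigma_2)}$ further according to which $B(w_i, \rho/4)$ contains the starting endpoint of $f(\gamma)$, minorize each sub-subfamily by $\Gamma_f(w_i, \rho/4, \rho/2)$, and apply~(\ref{eq2*A}) at the interior points $w_i \in D^{\,\prime}$. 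Summing the four estimates produces the required $N = N(k_0, \|Q\|_1, D^{\,\prime}) < \infty$, independent of $m \geqslant M_0$ and of $f$.

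The principal obstacle is the coordinated choice of $k_0, r_0, s_0$: all four separation conditions must hold simultaneously with a single $k_0$, which is feasible precisely because the chains $\{d_k\}$ and $\{g_k\}$ collapse to distinct impressions while the fixed disjoint paths $\widetilde{\alpha}, \widetilde{\beta}$ stay away from the opposite impression. A secondary subtlety is that the treatment of $\Gamma_m^{(\sigma_1, \sigma_2)}$ requires~(\ref{eq2*A}) at interior points of $D^{\,\prime}$, which is exactly the strengthening in the hypothesis of Lemma~\ref{lem4A} compared with Theorem~\ref{th1A}.
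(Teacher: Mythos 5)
Your proof is correct and rests on exactly the same toolkit as the paper's: minorization of the image family by sphere-connecting families (via the crossing argument of \cite[Theorem 1, $\S\,46$]{Ku}), the shrinking of the chains $d_k, g_k$ into small balls around their impressions, a finite covering of the fixed compact piece of $\gamma_{1,m}$ by balls centered at \emph{interior} points of $D^{\,\prime}$, the step-function $\eta$, and subadditivity of the modulus. The only real difference is bookkeeping. The paper splits $\Gamma_m$ into just two subfamilies according to whether $f(\gamma)$ starts on $|\widetilde{\alpha}|$ or on $|\widetilde{\alpha}_m|$ (it does not split the target $|\gamma_{2,m}|$): the first subfamily is handled by a single covering of $|\widetilde{\alpha}|$ by balls of radius $\varepsilon_0/4$ with $\varepsilon_0=\min\{{\rm dist}(|\widetilde{\alpha}|,\overline{g_{k_0}}),\,{\rm dist}(|\widetilde{\alpha}|,|\widetilde{\beta}|)\}$, which absorbs at once your two cases $(\sigma_1,\widetilde{\beta}_m)$ and $(\sigma_1,\sigma_2)$; the second subfamily is sent through the annulus at the single impression $y_0=I(P_1)$, covering your cases $(\widetilde{\alpha}_m,\widetilde{\beta}_m)$ and $(\widetilde{\alpha}_m,\sigma_2)$ together. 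Your four-fold decomposition, which also invokes the second impression $I(P_2)$, is more symmetric but proves nothing extra; the paper's choice of $\varepsilon_0$ is the slightly more economical device. You correctly isolate the two points that actually matter: the compatibility of the choice of $k_0$ with the fixed disjoint paths (guaranteed because Lemma~\ref{lem1} allows $k_0$ arbitrarily large and the chains collapse to singleton impressions by Proposition~\ref{pr3}), and the fact that the fixed-piece estimate forces the hypothesis that (\ref{eq2*A}) hold at interior points of $D^{\,\prime}$, which is precisely why Lemma~\ref{lem4A} assumes (\ref{eq2*A}) on all of $\overline{D^{\,\prime}}$ rather than only on $\partial D^{\,\prime}$.
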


\medskip
\begin{proof}
Denote $y_0:=I([E_k])$ (see~\cite[Theorem~10.8]{ABBS}). Arguing as
in the proof of Lemma~2.1 in \cite{Sev$_1$}, we may show that, for
every $r>0$ there exists $N\in {\Bbb N}$ such that
\begin{equation}\label{eq10C} d_k\subset
B(y_0, r)\cap D^{\,\prime}\quad \forall\,\, k\geqslant N\,.
\end{equation}
Since $D^{\,\prime}$ is connected and $d_{k_0}\ne D^{\,\prime},$ we
obtain that $\partial d_{k_0}\cap D^{\,\prime}\ne\varnothing$ (see
\cite[Ch.~5, $\S\,$46, item I]{Ku}). Set $r_0:=d(y_0,
\partial d_{k_0+1}\cap D^{\,\prime}).$ Since $\overline{d_{k_0}}$ is compact, $r_0>0.$
By (\ref{eq10C}), there exists $k_1\in {\Bbb N}$ such that
\begin{equation}\label{eq10D} d_k\subset
B(y_0, r_0/2)\cap D^{\,\prime}\quad \forall\,\, k\geqslant k_1\,.
\end{equation}
Set $D_0:=d_{k_1+1},$ $D_*:=g_{k_1+1}.$ Let us to show that
\begin{equation}\label{eq11D}
\Gamma(D_0, D_*, D^{\,\prime})>\Gamma(S(y_0, r_0/2), S(y_0, r_0),
A(y_0, r_0/2, r_0))\,,
\end{equation}
where $A(y_0, r_1, r_2)=\{y\in X^{\,\prime}: r_0/2<d^{\,\prime}(y,
y_0)<r_0\}.$
Assume that $\Delta\in \Gamma(D_0, D_*, D^{\,\prime}),$ $\Delta:[0,
1]\rightarrow D^{\,\prime}.$ Set
$$|\Delta|:=\{y\in D^{\,\prime}: \exists\,t\in[0, 1]:
\gamma(t)=y\}\,.$$
Since $d_1\cap g_1=\varnothing,$ $|\Delta|\cap
d_{k_0+1}\ne\varnothing\ne |\Delta|\cap (D^{\,\prime}\setminus
d_{k_0+1}).$ Thus,
\begin{equation}\label{eq13A}
|\Delta|\cap \partial d_{k_0+1}\ne\varnothing
\end{equation} (see \cite[Theorem 1, $\S\,$46, item I]{Ku}).
Moreover, observe that
\begin{equation}\label{eq14D}
\Delta(1)\not\in \partial d_{k_0+1}\,.
\end{equation}
Suppose the contrary, i.e., that $\Delta(1)\in \partial d_{k_0+1}.$
By definition of prime end, $\partial d_{k_0+1}\cap D\subset
\overline{d_{k_0}}.$ Since ${\rm dist}\,(D\cap \partial d_{k+1},
D\cap \partial d_k)>0$ for any $k=1, 2,\ldots,$ we obtain that
$\partial d_{k_0+1}\cap D\subset d_{k_0}.$ Therefore, $\Delta(1)\in
d_{k_0}$ and, simultaneously, $\Delta(1)\in g_{k_1+1}\subset
g_{k_0}.$ The last relations contradict with $d_1\cap
g_1=\varnothing.$ Thus, (\ref{eq14D}) holds, as required.

\medskip
By (\ref{eq10D}), we obtain that $|\Delta|\cap B(y_0,
r_0/2)\ne\varnothing.$ We prove that $|\Delta|\cap
(D^{\,\prime}\setminus B(y_0, r_0/2))\ne\varnothing.$ In fact, if it
is not true, then $\Delta(t)\in B(y_0, r_0/2)$ for every $t\in [0,
1].$ However, by (\ref{eq13A}) we obtain that $(\partial
d_{k_0+1}\cap D^{\,\prime})\cap B(y_0, r_0/2)\ne \varnothing,$ that
contradicts to the definition of $r_0.$ Thus, $|\Delta|\cap
(D^{\,\prime}\setminus B(y_0, r_0/2))\ne\varnothing,$ as required.
Now, by \cite[Theorem 1, $\S\,$46, item I]{Ku}, there exists $t_1\in
(0, 1]$ with $\Delta(t_1)\in S(y_0, r_0/2).$ We may consider that
$t_1=\max\{t\in [0, 1]: \Delta(t)\in S(y_0, r_0/2)\}.$ We prove that
$t_1\ne 1.$ Suppose the contrary, i.e., suppose that $t_1=1.$ Now,
we obtain that $\Delta(t)\in B(y_0, r_0/2)$ for every $t\in [0, 1).$
On the other hand, by (\ref{eq13A}) and (\ref{eq14D}), we obtain
that $\partial d_{k_0+1}\cap B(y_0, r_0/2)\ne \varnothing,$ which
contradicts to the definition of $r_0.$ Thus, $t_1\ne 1,$ as
required. Set $\Delta_1:=\Delta|_{[t_1, 1]}.$

\medskip
By the definition, $|\Delta_1|\cap B(y_0, r_0)\ne\varnothing.$ We
prove that $|\Delta_1|\cap (D^{\,\prime}\setminus B(y_0,
r_0))\ne\varnothing.$ In fact, assume the contrary, i.e., assume
that $\gamma_1(t)\in B(y_0, r_0)$ for every $t\in [t_1, 1].$ Since
$\gamma(t)\in B(y_0, r_0/2)$ for $t<t_1,$ by (\ref{eq13A}) we obtain
that $|\gamma_1|\cap
\partial d_{k_0+1}\ne\varnothing.$ Consequently, $B(y_0, r_0)\cap
(\partial d_{k_0+1}\cap D^{\,\prime})\ne\varnothing,$ that
contradicts to the definition of $r_0.$ Thus, $|\Delta_1|\cap
(D^{\,\prime}\setminus B(y_0, r_0))\ne\varnothing,$ as required.
Now, by \cite[Theorem 1, $\S\,$46, item I]{Ku}, there exists $t_2\in
(t_1, 1]$ with $\Delta(t_2)\in S(y_0, r).$ We may consider that
$t_2=\min\{t\in [t_1, 1]: \Delta(t)\in S(y_0, r_0)\}.$ We put
$\Delta_2:=\Delta|_{[t_1, t_2]}.$ Observe that $\Delta>\Delta_2$ and
$\Delta_2\in\Gamma(S(y_0, r_0/2), S(y_0, r_0), A(y_0, r_0/2, r_0)).$
Thus, (\ref{eq11D}) has been proved.

\medskip
Let $k_0$ be an arbitrary number for which the statement of
Lemma~\ref{lem1} holds. Without loss of generality, we may assume
that $k_0\geqslant k_1+1.$ By the definition of $\gamma_{1, m}$ and
of the family $\Gamma_m$ we may write that
\begin{equation}\label{eq7A}
\Gamma_m=\Gamma_m^1\cup \Gamma_m^2\,,
\end{equation}
where $\Gamma_m^1$ is a family of paths $\gamma\in\Gamma_m$ such
that $f(\gamma)\in \Gamma(|\widetilde{\alpha}|, |\gamma_{2, m}|,
D^{\,\prime})$ and $\Gamma_m^2$ is a family of paths
$\gamma\in\Gamma_m$ such that $f(\gamma)\in
\Gamma(|\widetilde{\alpha}_m|, |\gamma_{2, m}|, D^{\,\prime}).$

\medskip
Taking into account the notation of Lemma~\ref{lem1}, we put
$$\varepsilon_0:=\min\{{\rm dist}\,(|\widetilde{\alpha}|,
\overline{g_{k_0}}), {\rm dist}\,(|\widetilde{\alpha}|,
|\widetilde{\beta}|)\}>0\,.$$
Let us consider the covering $\bigcup\limits_{x\in
|\widetilde{\alpha}|}B(x, \varepsilon_0/4)$ of
$|\widetilde{\alpha}|.$ Since $|\widetilde{\alpha}|$ is a compactum
in $D^{\,\prime},$ there are numbers $i_1,\ldots, i_{N_0}$ such that
$|\widetilde{\alpha}|\subset \bigcup\limits_{i=1}^{N_0} B(z_i,
\varepsilon_0/4),$ where $z_i\in |\widetilde{\alpha}|$ for
$1\leqslant i\leqslant N_0.$ By~\cite[Theorem~1.I.5.46]{Ku}
\begin{equation}\label{eq5E}
\Gamma(|\widetilde{\alpha}|, |\gamma_{2, m}|,
D^{\,\prime})>\bigcup\limits_{i=1}^{N_0} \Gamma(S(z_i,
\varepsilon_0/4), S(z_i, \varepsilon_0/2), A(z_i, \varepsilon_0/4,
\varepsilon_0/2))\,.
\end{equation}
Fix $\gamma\in \Gamma_m^1,$ $\gamma:[0, 1]\rightarrow D,$
$\gamma(0)\in |\widetilde{\alpha}|,$ $\gamma(1)\in |\gamma_{2, m}|.$
It follows from~(\ref{eq5E}) that $f(\gamma)$ has a subpath
$f(\gamma)_1:=f(\gamma)|_{[p_1, p_2]}$ such that
$$f(\gamma)_1\in \Gamma(S(z_i, \varepsilon_0/4), S(z_i,
\varepsilon_0/2), A(z_i, \varepsilon_0/4,  \varepsilon_0/2))$$ for
some $1\leqslant i\leqslant N_0.$ Then $\gamma|_{[p_1, p_2]}$ is a
subpath of $\gamma$ and belongs to~$\Gamma_f(z_i, \varepsilon_0/4,
\varepsilon_0/2),$ because
$$f(\gamma|_{[p_1, p_2]})=f(\gamma)|_{[p_1, p_2]}\in\Gamma(S(z_i,
\varepsilon_0/4), S(z_i, \varepsilon_0/2), A(z_i, \varepsilon_0/4,
\varepsilon_0/2)).$$ Thus
\begin{equation}\label{eq6E}
\Gamma_m^1>\bigcup\limits_{i=1}^{N_0} \Gamma_f(z_i, \varepsilon_0/4,
\varepsilon_0/2)\,.
\end{equation}
Put
$$\eta(t)= \left\{
\begin{array}{rr}
4/\varepsilon_0, & t\in [\varepsilon_0/4, \varepsilon_0/2],\\
0,  &  t\not\in [\varepsilon_0/4, \varepsilon_0/2]
\end{array}
\right. \,.$$
Observe that the function~$\eta$ satisfies the
relation~(\ref{eqA2}). Then, by the definition of $f$
in~(\ref{eq2*A}), by the relation~(\ref{eq6E}) and due to the
subadditivity of the modulus of families of paths
(see~\cite[Theorem~6.2]{Va}), we obtain that
\begin{equation}\label{eq1H}
M_{\alpha}(\Gamma_m^1)\leqslant \sum\limits_{i=1}^{N_0}
M_{\alpha}(\Gamma_f(z_i, \varepsilon_0/4, \varepsilon_0/2))\leqslant
\sum\limits_{i=1}^{N_0} \frac{N_04^{\alpha}\Vert
Q\Vert_1}{\varepsilon^{\alpha}_0}\,,\qquad m\geqslant M_0\,,
\end{equation}
where $\Vert Q\Vert_1=\int\limits_{D^{\,\prime}}Q(x)\,dm(x).$

\medskip
Let $\widetilde{\gamma}\in f(\gamma)\in
\Gamma(|\widetilde{\alpha}_m|, |\gamma_{2, m}|, D^{\,\prime}),$
$\widetilde{\gamma}:[0,1]\rightarrow D^{\,\prime},$
$\widetilde{\gamma}(0)\in |\widetilde{\alpha}_m|$ and
$\widetilde{\gamma}(1)\in |\gamma_{2, m}|.$ Since
$\widetilde{\alpha_m}(t)\in d_{k_0+1}$ for $t\in [t_1, 1]$ and
$\widetilde{\beta_m}(t)\in g_{k_0+1}$ for $t\in [t_2, 1],$
by~(\ref{eq11D}) we obtain that
$\Gamma_m^2>\Gamma_f(y_0, r_0/2, r_0).$
Arguing similarly as above, we put
$$\eta(t)= \left\{
\begin{array}{rr}
2/r_0, & t\in [r_0/2, r_0],\\
0,  &  t\not\in [r_0/2, r_0]
\end{array}
\right. \,.$$
Now, by the last relation we obtain that
\begin{equation}\label{eq4DD}
M_{\alpha}(\Gamma_m^2)\leqslant \frac{2\Vert
Q\Vert_1}{r_0^{\alpha}}\,,\quad m\geqslant M_0\,.
\end{equation}
Thus, by~(\ref{eq7A}), (\ref{eq1H}) and~(\ref{eq4DD}), due to the
subadditivity of the modulus of families of paths
(see~\cite[Theorem~1]{Fu}), we obtain that
$$M_{\alpha}(\Gamma_m)\leqslant
\left(\frac{N_04^{\alpha}}{\varepsilon^{\alpha}_0}+\frac{2}{r_0^{\alpha}}\right)\Vert
Q\Vert_1\,,\quad m\geqslant M_0\,.$$
The right part of the last relation does not depend on~$m,$ so we
may put
$$N:=\left(\frac{N_04^{\alpha}}{\varepsilon^{\alpha}_0}+\frac{2}{r_0^{\alpha}}\right)\Vert
Q\Vert_1\,.$$ Lemma~\ref{lem4A} is proved.~$\Box$
\end{proof}

\medskip
\begin{lemma}\label{lem5}
{\sl\, Let $D^{\,\prime}$ be a bounded regular domain in
$X^{\,\prime},$ which is finitely connected on the boundary. Then
any two pairs of (different) points $a\in D^{\,\prime},$
$b\in\overline{D^{\,\prime}}$ and $c\in D^{\,\prime},$
$d\in\overline{D^{\,\prime}}$ may be joined by disjoint paths
$\alpha:[0, 1]\rightarrow \overline{D^{\,\prime}}$ and $\beta:[0,
1]\rightarrow \overline{D^{\,\prime}}$ such that $\alpha(t),
\beta(t)\in D^{\,\prime}$ for any $t\in [0, 1).$ }
\end{lemma}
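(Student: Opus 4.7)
\emph{Proof proposal.} The plan is to reduce the statement to the already established Euclidean version, Proposition~\ref{pr1}, by transporting the problem through the quasiconformal homeomorphism guaranteed by the regularity of $D^{\,\prime}.$

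First, since $D^{\,\prime}$ is regular, there is a quasiconformal homeomorphism $h:D^{\,\prime}\to D_0$ onto a bounded domain $D_0\subset{\Bbb R}^n$ with locally quasiconformal boundary and with $\overline{D_0}$ compact. By the very definition of local quasiconformality, $D_0$ is locally connected on $\partial D_0$; in particular, Proposition~\ref{pr1} is applicable in $D_0.$ Moreover $D_0$ is finitely connected at the boundary, and by hypothesis so is $D^{\,\prime},$ so Proposition~\ref{pr3} identifies $\partial D^{\,\prime}$ (resp.\ $\partial D_0$) with the prime end boundary $E_{D^{\,\prime}}$ (resp.\ $E_{D_0}$) via singleton impressions.

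Second, I would extend $h$ to a homeomorphism $\widetilde{h}\colon\overline{D^{\,\prime}}\to\overline{D_0}$ of the metric closures. For the forward direction this is essentially the content of Lemma~\ref{lem2} combined with the extension result \cite[Theorem~1.1]{Sev$_1$} already invoked there: $h$ extends continuously from $\overline{D^{\,\prime}}_P$ to $\overline{D_0},$ and the singleton-impression identification lets this descend to a continuous map of the metric closures. Applying the same reasoning to the inverse quasiconformal mapping $h^{\,-1}\colon D_0\to D^{\,\prime}$ yields continuity of $\widetilde{h}^{\,-1};$ alternatively, since both closures are compact Hausdorff, the continuous bijection $\widetilde{h}$ is automatically a homeomorphism.

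Third, set $a^{\,\prime}:=\widetilde{h}(a),$ $b^{\,\prime}:=\widetilde{h}(b),$ $c^{\,\prime}:=\widetilde{h}(c),$ $d^{\,\prime}:=\widetilde{h}(d).$ Since $\widetilde{h}$ is a homeomorphism restricting to $h\colon D^{\,\prime}\to D_0,$ the points $a^{\,\prime},c^{\,\prime}$ lie in $D_0$ and $b^{\,\prime},d^{\,\prime}$ lie in $\overline{D_0},$ with the pairs being distinct. By Proposition~\ref{pr1} applied in $D_0,$ there exist disjoint paths $\alpha^{\,\prime},\beta^{\,\prime}\colon [0,1]\to\overline{D_0}$ joining $a^{\,\prime},b^{\,\prime}$ and $c^{\,\prime},d^{\,\prime}$ respectively, with $\alpha^{\,\prime}(t),\beta^{\,\prime}(t)\in D_0$ for $t\in(0,1).$ Pulling back, $\alpha:=\widetilde{h}^{\,-1}\circ\alpha^{\,\prime}$ and $\beta:=\widetilde{h}^{\,-1}\circ\beta^{\,\prime}$ are continuous, disjoint (by injectivity of $\widetilde{h}^{\,-1}$), take values in $D^{\,\prime}$ on $(0,1)$ (since $\widetilde{h}^{\,-1}(D_0)=D^{\,\prime}$), and realise the prescribed endpoint conditions.

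The principal obstacle is step two: proving that the quasiconformal map $h$ extends to an honest homeomorphism of the \emph{metric} closures, not merely of the prime end closures. This forces one to use both hypotheses of finite connectedness at the boundary (to identify topological and prime end boundaries) and to apply the extension theorem symmetrically to $h$ and $h^{\,-1}.$ Once this identification is in hand, the remaining steps are routine transport through a homeomorphism and an invocation of Proposition~\ref{pr1}.
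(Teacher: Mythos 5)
There is a genuine gap in your second step. You claim that, because $D^{\,\prime}$ and $D_0$ are finitely connected at the boundary, Proposition~\ref{pr3} ``identifies'' $\partial D^{\,\prime}$ with $E_{D^{\,\prime}}$ via singleton impressions, and that the prime end extension of $h$ therefore descends to a homeomorphism $\widetilde{h}\colon\overline{D^{\,\prime}}\rightarrow\overline{D_0}$ of the metric closures. Proposition~\ref{pr3} does not give such an identification: it says that every prime end has a singleton impression and that every boundary point is the impression of \emph{some} prime end, but the impression map $E_{D^{\,\prime}}\rightarrow\partial D^{\,\prime}$ need not be injective. The paper's own Example~1 with $D^{\,\prime}={\Bbb D}\setminus I$ (a slit disk, which is regular and finitely connected at the boundary) shows the failure: each interior point of the slit is the impression of two distinct prime ends, which the quasiconformal map $h$ sends to two distinct points of $\partial D_0.$ Hence no single-valued extension of $h$ to $\overline{D^{\,\prime}}$ exists, let alone a homeomorphism, and the ``continuous bijection between compact Hausdorff spaces'' argument never gets off the ground.

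The paper's proof uses the same transport strategy but routes it through the prime end closure rather than the metric closure, which is exactly what repairs this. It first chooses prime ends $P_1, P_2\in E_{D^{\,\prime}}$ with $I(P_1)=b$ and $I(P_2)=d$ (this choice resolves the ambiguity your construction runs into), extends $\varphi$ continuously as a map $\overline{D^{\,\prime}}_P\rightarrow\overline{D_0}$ (as in Lemma~\ref{lem2}) and, by \cite[Theorem~4.1]{Sev$_2$}, extends $\varphi^{\,-1}$ as a map $\overline{D_0}\rightarrow\overline{D^{\,\prime}}_P.$ It then applies Proposition~\ref{pr1} in $D_0$ to the points $\varphi(a),\varphi(P_1),\varphi(c),\varphi(P_2)$ and pulls the resulting paths back by $\varphi^{\,-1};$ the pulled-back paths converge to $P_1$ and $P_2$ in $\overline{D^{\,\prime}}_P,$ and only at this last step does the singleton-impression property convert that into metric convergence to $b$ and $d.$ Your third step (push forward, apply Proposition~\ref{pr1}, pull back) is correct in spirit, but you should carry it out at the level of $\overline{D^{\,\prime}}_P,$ not $\overline{D^{\,\prime}}.$
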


\medskip
\begin{proof}
By the regularity of $D^{\,\prime},$ there is a quasiconformal
mapping $\varphi$ of $D^{\,\prime}$ onto some domain $D_0\subset
{\Bbb R}^n$ with a locally quasiconformal boundary.  Fix $a\in
D^{\,\prime},$ $b\in\overline{D^{\,\prime}}$ and $c\in
D^{\,\prime},$ $d\in\overline{D^{\,\prime}}.$ Since $D^{\,\prime}$
is finitely connected on the boundary, by \cite[Lemma~10.6]{ABBS}
there are $P_1, P_2\in E_{D^{\,\prime}}$ such that $I(P_1)=b$ and
$I(P_2)=d.$  Arguing as in the proof of Lemma~\ref{lem1}, we may
prove that $\varphi$ satisfies the estimate similar to~(\ref{eq1G}),
in addition, we observe that $\varphi$ has a continuous extension
$\varphi:\overline{D^{\,\prime}}_P\rightarrow \overline{D_0}.$ Let
$\widetilde{a}=\varphi(a),$ $\widetilde{b}=\varphi(P_1),$
$\widetilde{c}=\varphi(c)$ and $\widetilde{d}=\varphi(P_2).$
Moreover, by~\cite[Theorem~4.1]{Sev$_2$} $\varphi^{\,-1}$ has a
continuous extension $\varphi^{\,-1}: \overline{D_0}\rightarrow
\overline{D^{\,\prime}}_P.$ By Proposition~\ref{pr1} the points
$\widetilde{a}\in D_0,$ $\widetilde{b}\in\overline{D_0}$ and
$\widetilde{c}\in D_0,$ $\widetilde{d}\in\overline{D_0}$ may be
joined by disjoint paths $C_1:[0, 1]\rightarrow \overline{D_0}$ and
$C_2:[0, 1]\rightarrow \overline{D_0}$ such that $C_i(t)\in D_0$ for
any $t\in [0, 1)$ and any $i=1,2.$ Let
$\widetilde{C}_i:=\varphi^{\,-1}(C_i|_{[0, 1)}).$ Since
$\varphi^{\,-1}: \overline{D_0}\rightarrow
\overline{D^{\,\prime}}_P$ is continuous,
$\widetilde{C}_1(t)\rightarrow P_1$ and
$\widetilde{C}_2(t)\rightarrow P_2$ as $t\rightarrow 1-0.$ Since
$I(P_1)=b$ and $I(P_2)=d,$ $\widetilde{C}_1(t)\rightarrow b$ and
$\widetilde{C}_2(t)\rightarrow d$ as $t\rightarrow 1-0.$ Putting%
$$\alpha(t)=\begin{cases}\varphi^{\,-1}(C_1(t))\,,&t\in [0,1)\\
\lim\limits_{t\rightarrow 1-0}\varphi^{\,-1}(C_1(t))\,,
&t=1\end{cases}\,,\quad \beta(t)=\begin{cases}\varphi^{\,-1}(C_2(t))\,,&t\in [0,1)\\
\lim\limits_{t\rightarrow 1-0}\varphi^{\,-1}(C_2(t))\,,
&t=1\end{cases}$$
we obtain the desired paths $\alpha$ and $\beta.$~$\Box$
\end{proof}

\section{On equicontinuity of families at inner points }

The presentation of this section is conceptually close to the
publication~\cite{Skv}. The only significant difference is the
presence of weak sphericalization in the space under consideration,
which is present in~\cite{Skv}, but is not necessary when presenting
the material in this article. The proofs of the results in this
publication and this manuscript are very similar, however, for the
sake of completeness, we present them in full below.

\medskip
Let $\beta:[a, c)\rightarrow D$ be a path in $D\subset (X, d).$ In
what follows, we write $\beta(t)\rightarrow
\partial D,$ if for any $\varepsilon>0$ there is $\delta:=\delta(\varepsilon)>0$
such that $d(\beta(t), \partial D)<\varepsilon$ for any $t\in
(\delta, c).$ Here we assume that $\partial D\ne\varnothing.$ The
following statement holds, which in a more special case is proved
in~\cite[Lemma~2.1]{Skv}.

\begin{lemma}\label{lm1}{\sl\,
Let $D$ and $D^{\,\prime}$ be domains with finite Hausdorff
dimensions $\alpha$ and $\alpha^{\,\prime}\geqslant 2$ in spaces
$(X,d,\mu)$ and $(X^{\,\prime},d^{\,\prime}, \mu^{\,\prime}),$
respectively. Assume that $X$ is locally connected and
$\overline{D}, \overline{D^{\,\prime}}$ are compact sets. Let $f$ be
an open discrete mapping of $D$ onto $D^{\,\prime}.$ Let $\beta:[a,
b)\rightarrow D^{\,\prime}$ be a path such that $\beta(t)\rightarrow
\partial D^{\,\prime}$ as $t\rightarrow b-0$ and let
$\alpha:[a,c)\rightarrow D$ be a maximal $f$-lifting of $\beta$
starting at $x\in f^{\,-1}(\beta(a)).$ Then $d(\alpha(t),\partial
D)\rightarrow 0$ as $t\rightarrow c-0.$}
\end{lemma}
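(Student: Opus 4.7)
I would argue by contradiction, adapting the cluster-set scheme from the proof of Lemma~\ref{lem9}. Suppose that $d(\alpha(t),\partial D)$ does not tend to $0$ as $t\to c-0$; then there exist $\varepsilon_0>0$ and a monotone sequence $t_k\in[a,c)$ with $t_k\to c$ and $d(\alpha(t_k),\partial D)\geqslant\varepsilon_0$ for all $k$. Since $\overline{D}$ is compact, after passing to a subsequence we may assume $\alpha(t_k)\to x_0\in\overline{D}$; continuity of $d(\cdot,\partial D)$ forces $d(x_0,\partial D)\geqslant\varepsilon_0>0$, so $x_0$ is an interior point of $D$.

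Next I split on whether $c=b$ or $c<b$. If $c=b$, then by the assumption $\beta(t)\to\partial D^{\,\prime}$ we have $d(\beta(t_k),\partial D^{\,\prime})\to 0$, while continuity of $f$ yields $\beta(t_k)=f(\alpha(t_k))\to f(x_0)\in D^{\,\prime}$. Since $\overline{D^{\,\prime}}$ is compact, $\partial D^{\,\prime}$ is closed in $X^{\,\prime}$, so $f(x_0)\in\partial D^{\,\prime}$, contradicting $f(x_0)\in D^{\,\prime}$ together with $D^{\,\prime}\cap\partial D^{\,\prime}=\varnothing$.

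If $c<b$, I use a cluster-set argument of the type employed in Lemma~\ref{lem9}. Consider
$$
G=\{z\in X:\ z=\lim_{k\to\infty}\alpha(s_k)\text{ for some monotone }s_k\in[a,c)\text{ with }s_k\to c\}.
$$
Since $\overline{\alpha([a,c))}$ is a closed subset of the compact set $\overline{D}$, it is compact, so by the Cantor intersection property $G=\bigcap_{k\geqslant 1}\overline{\alpha([t_k,c))}$ is non-empty, and by \cite[Theorem~5.II.5, $\S\,47$]{Ku} it is connected. Continuity of $\beta$ at $c\in[a,b)$ together with continuity of $f$ gives $f\equiv\beta(c)$ on $G\cap D$, so $G\cap D\subset f^{-1}(\beta(c))$, which is discrete by the discreteness of $f$. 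The ball $B(x_0,\varepsilon_0/2)$ lies inside $D$ and, after shrinking, contains no other point of $f^{-1}(\beta(c))$; hence $\{x_0\}$ is open in the relative topology of $G$, and connectedness of $G$ collapses $G$ to the singleton $\{x_0\}$. Consequently $\alpha(t)\to x_0\in D$ as $t\to c-0$, so $\alpha$ extends continuously to $[a,c]$ with $\alpha(c)=x_0\in D$. Applying \cite[Lemma~2.1]{SM} at $x_0$ to lift $\beta|_{[c,c^{\,\prime})}$ for some $c<c^{\,\prime}\leqslant b$ and concatenating with $\alpha$ produces an $f$-lifting on $[a,c^{\,\prime})$ strictly extending $\alpha$, contradicting its maximality.

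The main obstacle is the case $c<b$: here the hypothesis $\beta(t)\to\partial D^{\,\prime}$ is inactive, and since the present lemma does not assume that $f$ is closed, one cannot, as in Lemma~\ref{lem9}, appeal to the inclusion $C(\alpha,c)\subset D$ through $C(f,\partial D)\subset\partial D^{\,\prime}$. Instead, the contradiction hypothesis itself supplies a single interior accumulation point $x_0$ bounded away from $\partial D$, and the interplay of discreteness of $f$ with connectedness of the cluster set then forces $G=\{x_0\}$, opening the way for a maximality-contradicting extension.
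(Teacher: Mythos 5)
Your argument is correct and follows essentially the same route as the paper's: a cluster set $G$ at $t=c$, constancy of $f$ on $G\cap D$ combined with discreteness and connectedness of $G$ to collapse it to a single point, and then a maximality contradiction via \cite[Lemma~2.1]{SM}; your clopen argument for $G=\{x_0\}$ (using $d(x_0,\partial D)\geqslant\varepsilon_0$ to keep a small ball inside $D$) is in fact a slightly cleaner version of the paper's passage through the connected component $L_0$ of $D_0\cap D$. The only item the paper includes that you omit is the preliminary observation that $\partial D\ne\varnothing$ (deduced from $\beta(t)\to\partial D^{\,\prime}$, surjectivity and openness of $f$, and compactness of $\overline{D}$ and $\overline{D^{\,\prime}}$), which is needed for the quantity $d(\alpha(t),\partial D)$ to be well defined in the first place.
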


\begin{proof}
First of all, we observe that the formulation of Lemma~\ref{lm1} is
correct, namely, $\partial D$ is not empty. Indeed, since
$\beta(t)\rightarrow\partial D^{\,\prime},$ we have that $\partial
D^{\,\prime}\ne \varnothing.$ Then we may find a sequence of points
$y_k=\beta(t_k)\in D^{\,\prime}$ such that $d^{\,\prime}(y_k,
\partial D^{\,\prime})\rightarrow 0$ as $k\rightarrow \infty,$ where
$t_k\in [a, b).$ Since $\overline{D^{\,\prime}}$ is compact, we may
assume that $y_k\rightarrow y_0\in X^{\,\prime}.$ Then $y_0\in
\partial D^{\,\prime},$ because $d^{\,\prime}(y_k, \partial
D^{\,\prime})\rightarrow 0 $ as $k\rightarrow \infty.$ Since $f(D)=D
^{\,\prime},$ there is $x_k\in D$ such that $f(x_k)=y_k.$ Since
$\overline{D}$ is a compact set, we may also assume that the
sequence $x_k$ converges to some point $x_0$ as $k\rightarrow
\infty.$ Observe that, the point $x_0$ cannot be inner for $D,$
because otherwise due to the openness of the mapping $f$ the point
$y_0$ is inner, as well. Then $x_0\in \partial D,$ hence, $\partial
D \ne \varnothing,$ which needed to be proved.

\medskip
Since we have now established that $\partial D \ne \varnothing,$
only two cases are possible: or $d(\alpha (t), \partial
D)\rightarrow 0 $ as $t\rightarrow c-0,$ or $d(\alpha(p_k), \partial
D)\geqslant \delta_0> 0$ as $k\rightarrow\infty$ for some sequence
$p_k\rightarrow c- 0$ and some $\delta_0>0.$ Let us prove
Lemma~\ref{lm1} from the opposite, in other words, assume that there
is a path $\beta:[a, b) \rightarrow D^{\,\prime}$ and its maximal
$f$-lifting $\alpha:[a, c)\rightarrow D,$ $c\in (a, b),$ such that
the condition $d(\alpha (t), \partial D)\geqslant \delta_0> 0$ holds
for some $\delta_0>0$ and $t\rightarrow c-0. $ Since $\overline{D}$
is a compact set, we may consider that $\alpha(p_k)\rightarrow
x_1\in D$ as $k\rightarrow\infty.$ Put
$$D_0=\left\{x\in X:  x=\lim_{k\rightarrow \infty} \alpha(t_k),\quad
t_k \in [a,c), \quad\lim\limits_{k\rightarrow \infty}
t_k=c\right\}.$$
Observe that $c\ne b.$  Indeed, in the contrary case
$f(x_1)=\beta(c)\in D$ that contradicts the choice of the path
$\beta.$

\medskip
Let now $c\ne b.$ Passing to the subsequences if necessary, we may
limit ourselves to monotonic sequences $t_k.$ Let $x \in D_0\cap D,$
then by the continuity of $f$ we obtain that
$f(\alpha(t_k))\rightarrow f(x) $ as $k\rightarrow \infty,$ where
$t_k \in [a, c),$ $t_k \rightarrow c $ as $k\rightarrow \infty. $
However, $f(\alpha(t_k))=\beta(t_k)\rightarrow \beta (c)$ as $k
\rightarrow \infty. $ Then the mapping $f$ is constant on $D_0\cap
D.$ Since the sequence of connected sets $\alpha([t_k,c))$ is
monotone,
$$ D_0=\bigcap\limits_{k=1}^{\infty} \overline{\alpha([t_k,c))}
\ne\varnothing\,.$$ By~\cite[Theorem~5.II.47.5]{Ku} the set $D_0$ is
connected. Let $L_0$ be a connected component of $D_0\cap D$ that
contains the point $x_1.$ If $D_0$ contains at least two points,
then, by the definition of connectedness, $L_0$ has the same
property. Since $f$ is a discrete mapping, and $L_0\subset D,$ the
set $L_0$ is one-point. Hence, $D_0$ is one-point as well. In this
case, the path $\alpha: [a, c)\rightarrow D$ may be extended to the
closed curve  $\alpha: [a, c]\rightarrow D,$ and
$f(\alpha(c))=\beta(c).$ Then, by Lemma~2.1 in \cite{SM}, there
exists one more maximal $f$-lifting $\alpha^{\,\prime}$ of
$\beta_{[c, b)}$ starting at the point $\beta(c).$ Combining the
liftings $\alpha$ and $\alpha^{\,\prime},$ we obtain a new
$f$-lifting of the path $\beta$ which is defined on $[a,
c^{\,\prime}),$ $c^{\,\prime}\in (c, b).$ This contradicts the
maximality of the lifting $\alpha.$ The obtained contradiction
indicates that $d(\alpha(t),\partial D)\rightarrow 0$ as
$t\rightarrow c-0.$ $\Box$ ~\end{proof}

\medskip
Following~\cite[Section~9]{RS}, a space $X$ is
called {\it weakly flat} at the point $x_0\in X,$ if, for any
neighborhood $U$ of $x_0$ and for every $P>0,$ there exists a
neighborhood $V\subset U$ of $x_0$ such that
$$M_{\alpha}(\Gamma(E, F, X))\geqslant P$$
for any continua $E, F\subset X$ with $E\cap
\partial U=\varnothing\ne E\cap
\partial V$ and $F\cap \partial U=\varnothing\ne F\cap \partial V.$
A space $X$ is called {\it weakly flat}, if the indicated property
holds for any $x_0\in X.$ The following result is similar to
Theorem~1.1 in~\cite{Skv}.

\medskip
We say that a space  $(X,d,\mu)$ is {\it upper $\alpha$-regular at a
point} $x_0\in X$ if there is a constant $C> 0$ such that
$$
\mu(B(x_0,r))\leqslant Cr^{\alpha}$$
for the balls $B(x_0,r)$ centered at $x_0\in X$ with all radii
$r<r_0$ for some $r_0>0.$ We will also say that a space  $(X,d,\mu)$
is {\it upper $\alpha$-regular} if the above condition holds at
every point $x_0\in X.$ It follows from~\cite[formulae~2.1 and
2.2]{ABBS} that doubling measure spaces are locally $\alpha$-Ahlfors
regular, moreover, arguing similarly to \cite[section~8.7,
p.~61]{He} we may show that the number $\alpha$ equals to Hausdorff
dimension of $X.$

\medskip
Given $M>0$ and domains $D\subset X, D^{\,\prime}\subset
X^{\,\prime},$ denote by ${\frak S}_M(D, D^{\,\prime})$ a family of
all open discrete and closed mappings $f$ of $D$ onto $D^{\,\prime}$
such that the condition~(\ref{eq2*A}) holds for any $y_0\in
D^{\,\prime}$ for some $Q=Q_f$ and $\Vert
Q_f\Vert_{L^1(D^{\,\prime})}\leqslant M.$

\medskip
\begin{theorem}\label{th4A}{\sl\, Let $D$ and $D^{\,\prime}$ be domains with finite Hausdorff
dimensions $\alpha$ and $\alpha^{\,\prime}\geqslant 2$ in spaces
$(X,d,\mu)$ and $(X^{\,\prime},d^{\,\prime}, \mu^{\,\prime}),$
respectively. Assume that $X$ is locally connected, $X^{\,\prime}$
is complete and supports $\alpha^{\,\prime}$-Poincar\'{e}
inequality, and that the measures $\mu$ and $µ^{\,\prime}$ are
doubling. Suppose that $\overline{D}$ and $\overline{D^{\,\prime}}$
are compact sets, and $D$ is weakly flat as a metric space. Let
$D^{\,\prime}\subset X^{\,\prime}$ be a regular domain which is
finitely connected at the boundary. Then the family ${\frak S}_M(D,
D^{\,\prime})$ is equicontinuous in $D.$
 }
\end{theorem}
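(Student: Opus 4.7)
The plan is to argue by contradiction, combining the modulus upper bound from Lemmas~\ref{lem1} and~\ref{lem4A} with a modulus lower bound coming from the weak flatness of $D$ at the given interior point $x_0 \in D.$

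Assume ${\frak S}_M(D, D^{\,\prime})$ fails to be equicontinuous at some $x_0 \in D.$ Then there exist $f_m \in {\frak S}_M,$ $x_m \to x_0$ in $D,$ and $\varepsilon_0 > 0$ such that $d^{\,\prime}(f_m(x_m), f_m(x_0)) \geqslant \varepsilon_0$ for all $m.$ Since $\overline{D^{\,\prime}}_P$ is compact (Proposition~\ref{pr2} with \cite[Theorem~10.10]{ABBS}), passing to a subsequence gives $f_m(x_m) \to P_1$ and $f_m(x_0) \to P_2$ in the metric $m_P.$ Prime ends in finitely-connected-at-the-boundary domains have single-point impressions (Proposition~\ref{pr3}), so the $\varepsilon_0$-gap in $d^{\,\prime}$ forces $P_1 \ne P_2.$

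Choose fixed reference points $z_0, w_0 \in D^{\,\prime} \setminus (\overline{d_1} \cup \overline{g_1})$ relative to disjoint chain representatives $\{d_k\}, \{g_k\}$ of $P_1, P_2,$ and apply Lemma~\ref{lem1} to the sequences $\{f_m(x_m)\}$ and $\{f_m(x_0)\}$ (playing the roles of $\{x_m\}$ and $\{y_m\}$ there) to obtain, for all $m \geqslant M_0,$ disjoint paths $\gamma_{1,m}$ joining $z_0$ to $f_m(x_m)$ and $\gamma_{2,m}$ joining $w_0$ to $f_m(x_0)$ in $\overline{D^{\,\prime}},$ with the properties listed there. By Lemma~\ref{lem4A}, the family $\Gamma_m$ of paths $\gamma$ in $D$ with $f_m(\gamma) \in \Gamma(|\gamma_{1,m}|, |\gamma_{2,m}|, D^{\,\prime})$ satisfies $M_{\alpha}(\Gamma_m) \leqslant N$ with $N$ independent of $m$ and of $f_m.$ Using local compactness of $X^{\,\prime}$ (proper, by \cite[Proposition~3.1]{BB}) and Lemma~\ref{lem9}, lift the reversed paths $\gamma_{i,m}^{\,-1}$ via $f_m$ to whole closed liftings $\widetilde{\gamma}_{1,m}$ starting at $x_m$ and $\widetilde{\gamma}_{2,m}$ starting at $x_0$ in $D.$ The continua $|\widetilde{\gamma}_{1,m}|, |\widetilde{\gamma}_{2,m}|$ are disjoint (their projected loci are), and any path joining them in $D$ projects into $\Gamma(|\gamma_{1,m}|, |\gamma_{2,m}|, D^{\,\prime}),$ whence
$$M_{\alpha}(\Gamma(|\widetilde{\gamma}_{1,m}|, |\widetilde{\gamma}_{2,m}|, D)) \leqslant N\,.$$

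The concluding step is the lower modulus bound from weak flatness, which I expect to be the main obstacle. The difficulty is that the terminal endpoints of $\widetilde{\gamma}_{i,m}$ lie in the preimage sets $f_m^{\,-1}(z_0), f_m^{\,-1}(w_0),$ which depend on $m,$ so a priori the liftings might collapse to $\{x_0\}$ along a subsequence. To rule this out, one uses that the projected loci $|\gamma_{i,m}|$ pass through the fixed points $z_0, w_0$ lying at positive $d^{\,\prime}$-distance from the impressions $I(P_1), I(P_2);$ an argument based on closedness, discreteness of each $f_m,$ and the inverse Poletsky inequality applied at $y_0 = z_0$ with radii bracketing the gap $d^{\,\prime}(z_0, I(P_2)),$ together with the uniform bound $\Vert Q_{f_m}\Vert_1 \leqslant M,$ produces $r_0 > 0$ such that, along a subsequence, each $|\widetilde{\gamma}_{i,m}|$ exits $B(x_0, r_0).$ Once this ``exit'' property is secured, restrict to sub-continua $E_m \subset |\widetilde{\gamma}_{1,m}|$ and $F_m \subset |\widetilde{\gamma}_{2,m}|$ contained in $\overline{B(x_0, r_0)},$ containing $x_m$ (resp.\ $x_0$) and touching $S(x_0, r_0);$ applying the weak flatness of $D$ at $x_0$ with $U = B(x_0, 2r_0) \cap D$ and $V = B(x_0, \rho_m) \cap D$ for $\rho_m \to 0$ yields $M_{\alpha}(\Gamma(E_m, F_m, D)) \geqslant P$ for any prescribed $P,$ contradicting the bound $N < \infty$ and completing the proof.
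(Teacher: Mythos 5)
Your setup (contradiction, passing to subsequences in the compact space $\overline{D^{\,\prime}}_P$, concluding $P_1\ne P_2$ from the $\varepsilon_0$-gap and the single-point impressions) is fine, and the upper modulus bound via Lemmas~\ref{lem1} and~\ref{lem4A} is legitimate. But the proof has a genuine gap exactly where you flagged it, and the sketch you offer does not close it. Your liftings $\widetilde{\gamma}_{1,m},\widetilde{\gamma}_{2,m}$ terminate at points of $f_m^{\,-1}(z_0)$ and $f_m^{\,-1}(w_0)$, over which you have no control; to apply weak flatness at the interior point $x_0$ you need both lifted continua to meet $\partial U$ for a ball $U=B(x_0,r_0)$ of \emph{fixed} radius, i.e.\ a uniform lower bound on their diameters. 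The mechanism you invoke --- the inverse Poletsky inequality at $y_0=z_0$ with $\Vert Q_{f_m}\Vert_1\leqslant M$ --- only produces \emph{upper} bounds on moduli of lifted families, and a single lifted path does not generate a family whose modulus you could force to be large. Worse, the scenario you must exclude (a point $\xi_m\in f_m^{\,-1}(z_0)$ with $\xi_m\to x_0$ while $f_m(x_0)$ stays away from $z_0$) is itself precisely an instance of the failure of equicontinuity at $x_0$ that you are trying to disprove, so any argument ruling it out is essentially the theorem itself; it cannot be dispatched in a sentence.

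The paper avoids this trap by a different construction. It works with the ordinary limits $a_1,a_2\in\overline{D^{\,\prime}}$, uses Lemma~\ref{lem5} to join them by disjoint paths to two \emph{boundary} points $w_1,w_2\in\partial D^{\,\prime}$ (whose existence is first established via the quasiconformal equivalence with a domain having locally quasiconformal boundary), concatenates with short arcs to $f_m(x_0)$ and $f_m(x_m)$, and takes \emph{maximal} $f_m$-liftings of the resulting half-open paths. Lemma~\ref{lm1} then forces these liftings to approach $\partial D$, so the truncated lifted continua $P_m,Q_m$ contain $x_0$ (resp.\ $x_m$) together with points $z_m^i$ converging to points $p_i\in\partial D$; since $x_0$ is interior, $d(x_0,p_i)>0$ and the diameters of $P_m,Q_m$ are bounded below by $\tfrac12\min\{d(x_0,p_1),d(x_0,p_2)\}$, which is exactly the ``exit property'' you need but cannot obtain for liftings ending at interior preimages. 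The upper bound is then a direct covering of $|\gamma_1|$ by balls of radius $l_0/4$ plus the inverse Poletsky inequality, with no appeal to Lemmas~\ref{lem1} or~\ref{lem4A}. If you want to keep your route, you must replace the terminal points $z_0,w_0$ by boundary points of $D^{\,\prime}$ and use maximal liftings together with Lemma~\ref{lm1}; as written, the concluding step does not go through.
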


\medskip
\begin{proof}
Let us prove Theorem~\ref{th4A} by contradiction. Suppose that a
family ${\frak S}_M(D, D^{\,\prime})$ is not equicontinuous at some
point $x_0\in D,$ in other words, there are $x_0\in D$ and
$\varepsilon_0> 0$ with the following condition: for any $m\in {\Bbb
N}$ there is $x_m\in D,$ and a mapping $f_m \in {\frak S}_M(D,
D^{\,\prime})$ such that $d(x_0, x_m)<\frac{1}{m}$ and
\begin{equation}\label{eq13***}
d^{\,\prime}(f_m(x_0), f_m(x_m))\geqslant \varepsilon_0\,.
\end{equation}
Since $\overline{D^{\prime}}$ is a compact set, we may consider that
the sequences $f_m(x_0)$ and $f_m(x_m)$ converge to $a_1$ and $a_2
\in \overline {D^{\prime}}$ as $m\rightarrow \infty$. Then,
by~(\ref{eq13***}), by the triangle inequality and by the continuity
of the metric $d^{\,\prime}$ we obtain that $d^{\,\prime}(a_1,
a_2)\geqslant \varepsilon_0.$

\medskip
Let us show that there are at least two boundary points  $w_1, w_2
\in \partial D^{\prime},$ $w_1\ne w_2.$  Since, by condition,
$D^{\,\prime}$ is a regular domain, it can be mapped onto some
domain with a locally quasiconformal boundary by (some)
quasiconformal mapping $h:D^{\,\prime}\rightarrow D_0.$ By the
definition of a domain with a locally quasiconformal boundary, such
a domain has an infinite number of boundary points. By the remarks
made before the statement of Theorem~\ref{th4A}, the space
$X^{\,\prime}$ is locally Ahlfors regular with exponent
$\alpha^{\,\prime}$ equal to the Hausdorff dimension of the space
$X^{\,\prime}.$ By Proposition~\ref{pr2A} $D_0$ has a weakly flat
boundary. Now, by~\cite[Theorem~5]{Sm} $h$ may be extended to a
homeomorphism $\overline{h}:\overline{D^{\,\prime}}\rightarrow
\overline{D_0}.$ Since the domain $D_0$ has infinitely many boundary
points, it follows that the domain $D ^{\,\prime}$ is also the same
as required to prove.

\medskip
Define the paths $\gamma_1$ and $\gamma_2 $ as follows. If both
points $a_1$ and $a_2$ are boundary, then we denote by $\gamma_i$
the degenerate path, the image of which coincides with the
corresponding point $a_i,$ $i=1,2.$ If only one of the points $a_1$
or $a_2$ is boundary, for example, the point $a_1,$ then again
denote by $\gamma_1$ the corresponding degenerate path, the image of
which coincides with $a_1;$ let, for example, $a_1\ne w_2,$ then
join the point $a_2$ with the point $w_2$ by a path $\gamma_2,$
which lies in $D^{\,\prime}$ completely, except endpoint $w_2$ (this
is possible due to the Lemma~\ref{lem5}).

Finally, if both points $a_1$ and $a_2$ are inner, join the points
$a_1$ and $w_1,$ $a_2$ and $w_2$ with non-intersecting  paths
$\gamma_1$ and $\gamma_2$ in $D^{\,\prime},$ which is possible due
to the Lemma~\ref{lem5} (see Figure~\ref{fig1}). Note that
$|\gamma_i|, i= 1,2$ are compact sets in $X^{\prime}$ as continuous
images of the corresponding segments in the metric space, so there
is $l_0> 0$ such that $d^{\,\prime}(|\gamma_1|, |\gamma_2|)=l_0>0.$
\begin{figure}[h]\label{fig1}
\center{\includegraphics[scale=0.7]{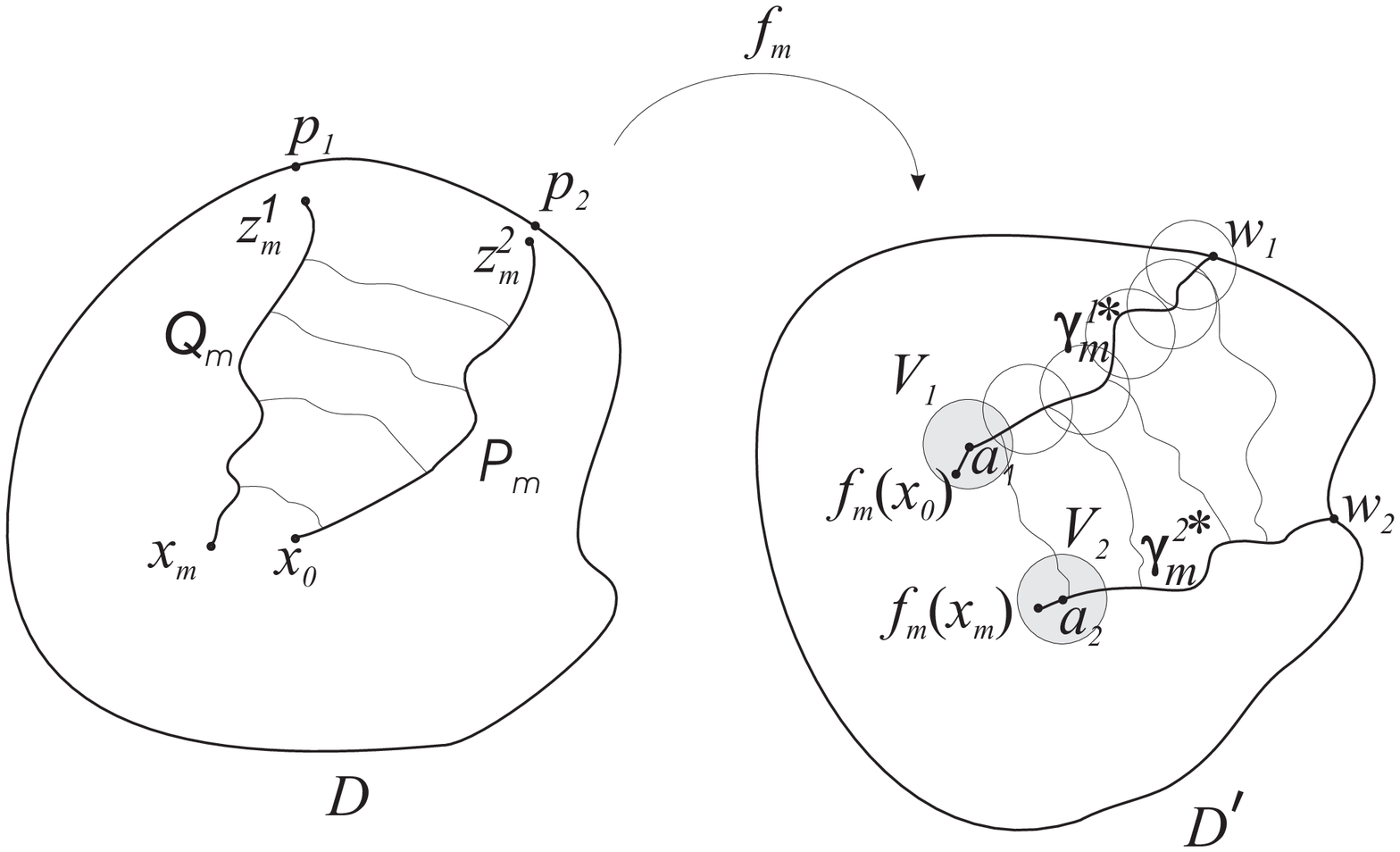}} \caption{To the
proof of Theorem~\ref{th4A}}
\end{figure}
Recall that, the space $X^{\,\prime}$ is complete and supports a
$\alpha^{\,\prime}$-Poincar\'{e} inequality (see \cite{ABBS}). Thus,
$X^{\,\prime}$ is locally connected (see \cite[Section~2]{ABBS}),
and proper (see \cite[Proposition~3.1]{BB}). Therefore,
$X^{\,\prime}$ is locally path connected by the
Mazurkiewicz–Moore–Menger theorem (see~\cite[Theorem~1, Ch.~6, $\S$
50, item II]{Ku}). Then there are path connected neighborhoods $V_i$
of the points $a_i,$ $i=1,2 $ such that $V_1\cap V_2=\varnothing.$
We may assume that $V_1\subset B(a_1, \frac{l_0}{4}).$ Join the
points $a_1$ and $a_2$ with the points $f_m(x_0)$ and $f_m(x_m)$ by
the paths $\alpha_m$ and $\beta_m$ within the neighborhoods $V_1$
and $V_2,$ respectively. Now let us consider new paths, namely, let
$\gamma_m^{1*}:[0,1]\rightarrow X^{\,\prime}$ be a path, which is a
union of paths $\gamma_1$ and $\alpha_m,$ and let
$\gamma_m^{2*}:[0,1]\rightarrow X^{\,\prime} $ be a path that is a
union of the paths $\gamma_2$ and $\beta_m,$ where
$\gamma_m^{1*}(0)=f_m(x_0)$ and $\gamma_m^{2*}(0)=f_m(x_m).$ Let
Put $$\gamma_m^1=\gamma_m^{1*}|_{[0, 1)}\,,
\gamma_m^2=\gamma_m^{2*}|_{[0, 1)}.$$
By~\cite[Lemma~2.1]{SM} there are maximal $f_m$-liftings
$\alpha_m^*$ and $\beta_m^*$ of the paths $\gamma_m^1$ and
$\gamma_m^2$ starting at the points $x_0$ and $x_m,$ respectively.

\medskip
By Lemma~\ref{lm1}, $d(\alpha_m^*(t), \partial D)\rightarrow 0$ as
$t\rightarrow c_1-0$ and $d(\beta_m^*(t), \partial D)\rightarrow 0$
as $t\rightarrow c_2-0.$ Thus, we may find points $z_m^1$ and
$z_m^2$ on the paths $\alpha_m^*$ and $\beta_m^*$ such that
$d(z_m^1,
\partial D)<\frac{1}{m}$ and $d(z_m^2,\partial D)<\frac{1}{m}.$
Since $\overline{D}$ is a compact set, we may consider that $z_m^1$
and $z_m^2$ converge to points $p_1$ and $p_2$ as
$m\rightarrow\infty,$ respectively. Let $P_m$ be a part of the locus
of the path $\alpha_m^*$ in $X,$ which is located between the points
$x_0$ and $z_m^1,$ and let $Q_m$ be a part of the locus of the path
$\beta_m^*$ in $X,$ located between the points $x_m$ and $z_m^2.$
Consider the family
$$\Gamma_m^*:=\Gamma(P_m,Q_m,D)\,.$$
By the definition of a maximal $f_m$-liftings starting at the points
$x_0$ and $x_m$ we obtain that
\begin{equation}\label{vkluch1}
f_m(P_m) \subset |\gamma_m^{1}|, \,\, f_m(Q_m) \subset
|\gamma_m^{2}|.
\end{equation}
Consider the coverage of $A_0:=\bigcup\limits_{y\in |\gamma_1|}B(y,
l_0/4)$ of set $|\gamma_1|.$ Since $|\gamma_1|$ is compact set, by
the Heine-Borel-Lebesgue lemma we may choose a finite number of
indices $1\leqslant N_0 < \infty$ and the corresponding points
$z_1,\ldots, z_{N_0}\in |\gamma_1|$ so that $|\gamma_1|\subset
B_0:=\bigcup\limits_{i=1}^{N_0}B(z_i, l_0/4).$ Without limiting the
generality, we may assume that all points $z_i$ belong
$D^{\,\prime},$ because otherwise by the triangle inequality we may
replace the ball $B(y, l_0/4)$ with a ball $B(y^*, l^*_0/4)$
centered at some point $y^*\in D^{\prime},$ where $l^*_0$ is some
positive number that can be chosen so that $ l_0 / 4 <l ^
* _ 0/4 <l_0 / 2. $ Also, since $V_1\subset B (a_1, \frac {l_0}{4}),$ we may
assume that $V_1 \subset B(z_{i_0}, \frac{l_0}{4})$ for some
$z_{i_0} \in D^{\,\prime}$ and $1\leqslant i_0 \leqslant N_0.$

Let $\Gamma_m^{\,\prime}$ be a family of all paths joining
$f_m(P_m)$ and $f_m(Q_m)$ in $D^{\,\prime},$ and let $\Gamma_m$ be a
family of all paths joining $|\gamma_m^{1}|$ and $|\gamma_m^{2}|$ in
$D^{\prime}.$ Then, using (\ref{vkluch1}) we obtain that
\begin{equation}\label{eq10CC}
\Gamma_m^{\,\prime}\subset
\Gamma_m=\bigcup\limits_{i=1}^{N_0}\Gamma_{mi}\,,
\end{equation}
where $\Gamma_{mi}$ is a family of all paths $\gamma:[0,
1]\rightarrow D^{\,\prime}$ such that $\gamma(0)\in B(z_i,
l_0/4)\cap |\gamma_m^{1}|$ and $\gamma(1)\in |\gamma_m^{2}|$ as
$1\leqslant i\leqslant N_0.$ Due to~\cite[Theorem~1.I.5, \S 46]{Ku},
we may show that
\begin{equation}\label{eq11C}
\Gamma_{mi}>\Gamma(S(z_i, l_0/4), S(z_i, l_0/2), A(z_i, l_0/4,
l_0/2))\,.
\end{equation}
Let $\gamma \in \Gamma_m^*,$ then $\gamma:[0,1]\rightarrow D,$
$\gamma(0) \in P_m,$ $\gamma(1) \in Q_m.$ In particular,
$f_m(\gamma(0)) \in f_m(P_m),$ $f_m(\gamma(1)) \in f_m(Q_m).$ Due to
the relation~(\ref{eq10CC}), we obtain that $f_m(\gamma) \in
\Gamma_{mi}$ for some $1\leqslant i \leqslant N_0.$ Now, it follows
from~(\ref{eq11C}) that $f_m(\gamma)$ has a subpath
$\triangle:(t_1,t_2)\rightarrow D^{\,\prime}$ such that $\triangle
\in \Gamma(S(z_i, \frac{l_0}{4}), S(z_i, \frac{l_0}{2}), A(z_i,
\frac{l_0}{4}, \frac{l_0}{2})).$ Thus, by the definition
$\gamma_1=\gamma|_{[t_1, t_2]}$ and $f_m(\gamma_1)=\triangle \in
\Gamma(S(z_i, \frac{l_0}{4}), S(z_i, \frac{l_0}{2}), A(z_i,
\frac{l_0}{4}, \frac{l_0}{2})).$ In other words, $\gamma>\gamma_1,$
where $\gamma_1 \in \Gamma_{f_m}(z_i, \frac{l_0}{4},
\frac{l_0}{2}).$ Thus
%
$$\Gamma_m^*>\bigcup\limits_{i=1}^{N_0}
\Gamma_{f_m}\left(z_i,\frac{l_0}{4},\frac{l_0}{2}\right)\,.$$
%
Set
$$\eta(t)= \left\{
\begin{array}{rr}
4/l_0, & t\in [l_0/4, l_0/2],\\
0,  &  t\not\in [l_0/4, l_0/2]\,.
\end{array}
\right. $$
Since $f_m$ satisfy~(\ref{eq2*A}) in $D^{\prime},$ we obtain that
$$M_{\alpha}(\Gamma_m^*)\leqslant
\sum\limits_{i=1}^{N_0}M_{\alpha}\left(\Gamma_{f_m}\left(z_i,\frac{l_0}{4},\frac{l_0}{2}\right)\right)
\leqslant
$$
\begin{equation}\label{eq14***}
\leqslant 4^{\alpha^{\prime}}(N_0/l_0^{\alpha^{\prime}})\cdot\Vert
Q_{f_m}\Vert_1\leqslant
4^{\alpha^{\prime}}(N_0/l_0^{\alpha^{\prime}})M<\infty\,.
\end{equation}
Here $\Vert Q_{f_m}\Vert_1$ denotes the norm of the function
$Q_{f_m}$ in $L^1(D^{\,\prime}).$

\medskip
On the other hand, for sufficiently large $m$ we obtain that
$$d(P_m)\geqslant d(z_m^1,x_0)\geqslant d(x_0,p_1)-d^{\,\prime}(z_m^1,p_1)\geqslant
\frac{1}{2}d(x_0,p_1)\,,$$
$$d(Q_m)\geqslant d(z_m^2,x_m)\geqslant d(z_m^2,x_0)-d(x_m,x_0)\geqslant $$
$$\geqslant d(p_2,x_0)-d(z_m^2,p_2)-d(x_m,x_0) \geqslant
\frac{1}{2}d(x_0,p_2)\,.$$
Set
$$U=B\left(x_0,\frac{R_0}{2}\right)=\left\{x\in X: d(x, x_0)<\frac{R_0}{2}\right\}\,,$$
where
$$R_0=\min\{d(x_0, p_1), d(x_0, p_2), d(x_0,\partial D)\}\,.$$
Since $x_m\rightarrow x_0$ as $m\rightarrow \infty,$ in addition,
$d(P_m)\geqslant \delta_1$ and $d(Q_m)\geqslant \delta_2$ for any
$m\geqslant m_0$ and some $m_0 \in {\Bbb N},$ due
to~\cite[Theorem~1.I.5, \S 46]{Ku} there is $m_1\in {\Bbb N}$ such
that $P_m\cap
\partial U \ne \varnothing \ne Q_m\cap \partial U$ for $m\geqslant m_1.$
Since $D$ is a weakly flat space, for any $P>0$ there is a
neighborhood $V\subset U$ of the point $x_0$ such that
\begin{equation}\label{eq4}
M_{\alpha}(\Gamma(E, F, D))>P
\end{equation}
for any continua $E, F\subset D$ such that $E\cap
\partial U \ne \varnothing \ne F\cap \partial U$ and $E\cap
\partial V \ne \varnothing \ne F\cap \partial V.$ Since
$x_m\rightarrow x_0$ as $m\rightarrow \infty,$ in addition,
$d(P_m)\geqslant \delta_1$ and $d(Q_m)\geqslant \delta_2$ for any
$m\geqslant m_0$ and some $m_0 \in {\Bbb N},$ due
to~\cite[Theorem~1.I.5, \S 46]{Ku} there is $m_2\in {\Bbb N},$
$m_2>m_1,$ such that $P_m\cap
\partial V \ne \varnothing \ne Q_m\cap \partial V.$ Then by~(\ref{eq4}) it follows that
\begin{equation}\label{eq5A}
M_{\alpha}(\Gamma_m(P_m, Q_m, D))>P\,,\quad m>m_2\,.
\end{equation}
The relation~(\ref{eq5A}) contradicts with~(\ref{eq14***}) since a
number $P$ my be chosen greater than
$4^{\alpha^{\prime}}(N_0/l_0^{\alpha^{\prime}})M.$ The obtained
contradiction proves the theorem.~$\Box$
\end{proof}

\section{Proof of Theorem~\ref{th2}}

The possibility of continuous extension of the mapping~$f\in {\frak
S}_M(D, D^{\,\prime})$ to $\partial D$ follows by
Theorem~\ref{th1A}. Since ${\frak S}_{\delta, A, M}(D,
D^{\,\prime})\subset {\frak S}_M(D, D^{\,\prime}),$ the
equicontinuity of~${\frak S}_{\delta, A, M}(D, D^{\,\prime})$ at
inner points of $D$ follows by Theorem~\ref{th4A}.

\medskip
Let us to show the equicontinuity of~${\frak S}_{\delta, A, М
}(\overline{D}, \overline{D^{\,\prime}})$ on $\partial D.$ Assume
the contrary.

Now, there is a point $z_0\in
\partial D,$ a number~$\varepsilon_0>0,$ a sequence $z_m\in
\overline{D}$ and a mapping $\overline{f}_m\in {\frak S}_{\delta, A,
M}(\overline{D}, \overline{D^{\,\prime}})$ such that $z_m\rightarrow
z_0$ as $m\rightarrow\infty$ and
\begin{equation}\label{eq12A}
m_P(\overline{f}_m(z_m),
\overline{f}_m(z_0))\geqslant\varepsilon_0,\quad m=1,2,\ldots ,
\end{equation}
where $m_P$ is some of possible metrics
in~$\overline{D^{\,\prime}}_P$ defined in Proposition~\ref{pr2}.
Since $f_m=\overline{f}_m|_{D}$ has a continuous extension to
$\overline{D}_P,$ we may assume that $z_m\in D$ and, in addition,
there is one more sequence $z^{\,\prime}_m\in D,$
$z^{\,\prime}_m\rightarrow z_0$ as $m\rightarrow\infty$ such that
$m_P(f_m(z^{\,\prime}_m), \overline{f}_m(z_0))\rightarrow 0$ as
$m\rightarrow\infty.$ In this case, it follows from~(\ref{eq12A})
that
$$
m_P(f_m(z_m), f_m(z^{\,\prime}_m))\geqslant\varepsilon_0/2,\quad
m\geqslant m_0\,.
$$
By~\cite[Theorem~10.10]{ABBS}, $\left(\overline{D^{\,\prime}}_P,
m_P\right)$ is a compact metric space. Thus, we may assume
that~$f_m(z_m)$ and $f_m(z_m^{\,\prime})$ converge to some $P_1,
P_2\in \overline{D^{\,\prime}}_P,$ $P_1\ne P_2,$ as
$m\rightarrow\infty.$ Let $d_m$ and $g_m$ be sequences of decreasing
domains corresponding to prime ends $P_1$ and $P_2,$ respectively.
Put $x_0, y_0\in A$ such that $x_0\ne y_0$ and $x_0\ne P_1\ne y_0,$
where the continuum~$A\subset D^{\,\prime}$ is taken from the
conditions of Theorem~\ref{th2}. Without loss of generality, we may
assume that $d_1\cap g_1=\varnothing$ and $x_0, y_0\not\in d_1\cup
g_1.$

\medskip
By Lemmas~\ref{lem1} and~\ref{lem4A}, we may find disjoint paths
$\gamma_{1,m}:[0, 1]\rightarrow D^{\,\prime}$ and $\gamma_{2,m}:[0,
1]\rightarrow D^{\,\prime}$ and a number $N> 0$ such that
$\gamma_{1, m}(0)=x_0,$ $\gamma_{1, m}(1)=f_m(z_m),$ $\gamma_{2,
m}(0)=y_0,$ $\gamma_{2, m}(0)=f_m(z^{\,\prime}_m),$ wherein
\begin{equation}\label{eq15}
M_{\alpha}(\Gamma_m)\leqslant N\,,\quad m\geqslant M_0\,,
\end{equation}
where $\Gamma_m$ consists of those and only those paths $\gamma$ в
$D$ for which $f_m(\gamma)\in\Gamma(|\gamma_{1, m}|, |\gamma_{2,
m}|, D^{\,\prime})$ (see Figure~\ref{fig6}).
\begin{figure}
\centering\includegraphics[width=400pt]{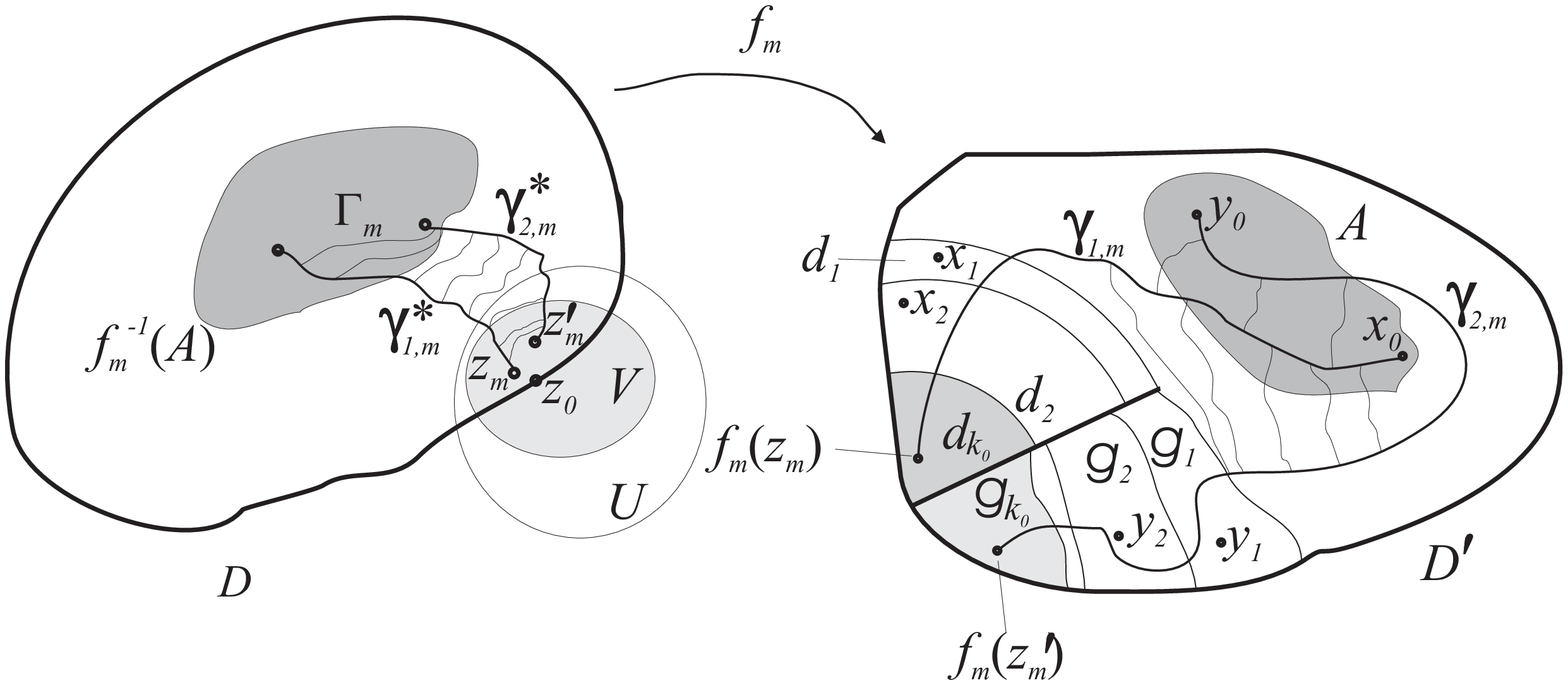} \caption{To
proof of Theorem~\ref{th2}.}\label{fig6}
\end{figure}
On the other hand, let~$\gamma^*_{1,m}$ and~$\gamma^*_{2,m}$ be the
total $f_m$-lifting of the paths~$\gamma_{1,m}$ and~$\gamma_{2,m}$
starting at the points~$z_m$ and $z^{\,\prime}_m,$ respectively
(such lifting exist by Lemma~\ref{lem5}). Now, $\gamma^*_{1,m}(1)\in
f^{\,-1}_m(A)$ and $\gamma^*_{2,m}(1)\in f^{\,-1}_m(A).$ Since by
the condition $d(f^{\,-1}_{m}(A),
\partial D)>\delta>0,$ $m=1,2,\ldots \,,$ we obtain that
$$d(|\gamma^*_{1, m}|)\geqslant d(z_m, \gamma^*_{1,m}(1)) \geqslant
(1/2)\cdot d(f^{\,-1}_m(A), \partial D)>\delta/2\,,$$
\begin{equation}\label{eq14A}
d(|\gamma^*_{2, m}|)\geqslant d(z^{\,\prime}_m, \gamma^*_{2,m}(1))
\geqslant (1/2)\cdot d(f^{\,-1}_m(A), \partial D)>\delta/2
\end{equation}
for sufficiently large $m\in {\Bbb N}.$
Choose the ball $U:=B(z_0, r_0)=\{z\in X: d(z, z_0)<r_0\},$ where
$r_0>0$ and $r_0<\delta/4.$ Observe that $|\gamma^*_{1, m}|\cap
U\ne\varnothing\ne |\gamma^*_{1, m}|\cap (D\setminus U)$ for
sufficiently large $m\in{\Bbb N},$ because $d(f_m(|\gamma_{1,
m}|))\geqslant \delta/2$ and $z_m\in|\gamma^*_{1, m}|,$
$z_m\rightarrow z_0$ as $m\rightarrow\infty.$ Arguing similarly, we
may conclude that~$|\gamma^*_{2, m}|\cap U\ne\varnothing\ne
|\gamma^*_{2, m}|\cap (D\setminus U).$ Since $|\gamma^*_{1, m}|$ and
$|\gamma^*_{2, m}|$ are continua, by~\cite[Theorem~1.I.5.46]{Ku}
\begin{equation}\label{eq8AA}
|\gamma^*_{1, m}|\cap \partial U\ne\varnothing, \quad |\gamma^*_{2,
m}|\cap
\partial U\ne\varnothing\,.
\end{equation}
Put $P:=N>0,$ where $N$ is a number from the relation~(\ref{eq15}).
Since $D$ has a weakly flat boundary, we may find a neighborhood
$V\subset U$ of $z_0$ such that
\begin{equation}\label{eq9AA}
M_{\alpha}(\Gamma(E, F, D))>N
\end{equation}
for any continua $E, F\subset D$ with $E\cap
\partial U\ne\varnothing\ne E\cap \partial V$ and $F\cap \partial
U\ne\varnothing\ne F\cap \partial V.$ Observe that
\begin{equation}\label{eq10AB}
|\gamma^*_{1, m}|\cap \partial V\ne\varnothing, \quad |\gamma^*_{2,
m}|\cap
\partial V\ne\varnothing\,.\end{equation}
for sufficiently large $m\in {\Bbb N}.$ Indeed, $z_m\in
|\gamma^*_{1, m}|$ and $z^{\,\prime}_m\in |\gamma^*_{2, m}|,$ where
$z_m, z^{\,\prime}_m\rightarrow z_0\in V$ as $m\rightarrow\infty.$
Thus, $|\gamma^*_{1, m}|\cap V\ne\varnothing\ne |\gamma^*_{2,
m}|\cap V$ for sufficiently large $m\in {\Bbb N}.$ Besides that,
$d(V)\leqslant d(U)=2r_0<\delta/2$ and $|\gamma^*_{1, m}|\cap
(D\setminus V)\ne\varnothing$ because $d(|\gamma^*_{1,
m}|)>\delta/2$ by~(\ref{eq14A}). Then $|\gamma^*_{1, m}|\cap\partial
V\ne\varnothing$ (see~\cite[Theorem~1.I.5.46]{Ku}). Similarly,
$d(V)\leqslant d(U)=2r_0<\delta/2.$ Now, since by~(\ref{eq14A})
$d(|\gamma^*_{2, m}|)>\delta/2,$ we obtain that $|\gamma^*_{2,
m}|\cap (D\setminus V)\ne\varnothing.$
By~\cite[Theorem~1.I.5.46]{Ku}, $|\gamma^*_{1, m}|\cap\partial
V\ne\varnothing.$ Thus, (\ref{eq10AB}) is proved. By~(\ref{eq9AA}),
(\ref{eq8AA}) and (\ref{eq10AB}), we obtain that
\begin{equation}\label{eq6a}
M_{\alpha}(\Gamma(|\gamma^*_{1, m}|, |\gamma^*_{2, m}|, D))>N\,.
\end{equation}
The inequality~(\ref{eq6a}) contradicts with~(\ref{eq15}), since
$\Gamma(|\gamma^*_{1, m}|, |\gamma^*_{2, m}|, D)\subset \Gamma_m$
and consequently
$$M_{\alpha}(\Gamma(|\gamma^*_{1, m}|, |\gamma^*_{2, m}|, D))
\leqslant M_{\alpha}(\Gamma_m)\leqslant N\,,\qquad m\geqslant
M_0\,.$$
The obtained contradiction indicates the incorrectness of the
assumption in~(\ref{eq12A}). Theorem~\ref{th2} is proved.~$\Box$

\section{Lemma on the continuum}

One of the versions of the following statement is established
in~\cite[item~v, Lemma~2]{SevSkv$_1$} for homeomorphisms and
''good'' boundaries, see also~\cite[Lemma~4.1]{SevSkv$_2$}. Let us
also point out the case relating to mappings with branching and good
boundaries, see~\cite[Lemma~6.1]{SSD}, as well as the case of bad
boundaries and homeomorphisms, see~\cite[Lemma~2.13]{ISS}. The
statement below seems to refer to the most general situation when a
mapping acts between domains of metric spaces. To the indicated
degree of generality, this statement is proved for the first time.

\medskip
\begin{lemma}\label{lem3}
{\sl\, Let $D$ and $D^{\,\prime}$ be domains with finite Hausdorff
dimensions $\alpha$ and $\alpha^{\,\prime}\geqslant 2$ in spaces
$(X,d,\mu)$ and $(X^{\,\prime},d^{\,\prime}, \mu^{\,\prime}),$
respectively.  Assume that $X$ is locally connected, $\overline{D}$
is compact, $X^{\,\prime}$ is complete and supports
$\alpha^{\,\prime}$-Poincar\'{e} inequality, and that the measures
$\mu$ and $µ^{\,\prime}$ are doubling. Assume that $D$ has a weakly
flat boundary, none of the components of which degenerates into a
point, $\overline{D}$ is a compact set and $D^{\,\prime}$ be a
bounded regular domain in $X^{\,\prime},$ which is finitely
connected on the boundary. Let $A$ be a non-degenerate continuum in
$D^{\,\prime}$ and $\delta>0.$ Assume that $f_m$ is a sequence of
open discrete and closed mappings of $D$ onto $D^{\,\prime}$
satisfying the following condition: for any $m=1,2,\ldots$ there is
a continuum $A_m\subset D,$ $m=1,2,\ldots ,$ such that $f_m(A_m)=A$
and $d(A_m)\geqslant \delta>0.$ Let $\partial D\ne \varnothing.$ If
there is $0<M_1<\infty$ such that $f_m$ satisfies~(\ref{eq2*A}) at
any $y_0\in D^{\,\prime}$ and $m=1,2,\ldots $ with some $Q=Q_m(y)$
for which $\Vert Q_m\Vert_{L^1(D^{\,\prime})}\leqslant M_1,$ then
there exists $\delta_1>0$ such that
$$d(A_m,
\partial D)>\delta_1>0\quad \forall\,\, m\in {\Bbb
N}\,.$$
}
\end{lemma}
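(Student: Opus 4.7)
The plan is to argue by contradiction. Suppose along a subsequence $d(A_m,\partial D)\to 0$. Choose $x_m\in A_m$ with $d(x_m,\partial D)<1/m$; by compactness of $\overline{D}$, after extraction, $x_m\to x_0\in\partial D$. Since $d(A_m)\geqslant\delta$, there exists $y_m\in A_m$ with $d(x_m,y_m)\geqslant\delta/2$, and (after further extraction) $y_m\to y_0\in\overline{D}$ with $d(x_0,y_0)\geqslant\delta/2$. Fix $U=B(x_0,r_0)$ with $r_0<\delta/8$. For large $m$, $x_m\in U$ and $y_m\notin\overline{U}$, so connectedness of $A_m$ gives $A_m\cap\partial U\ne\varnothing$; since $x_m\to x_0$, for every neighborhood $V\subset U$ of $x_0$ also $A_m\cap\partial V\ne\varnothing$ eventually. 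Thus $A_m$ serves as one of the two continua needed in the weak-flatness property of $\partial D$ at $x_0$.

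The second continuum $F_m\subset D$ is constructed as follows. Fix $p\in A$ and radii $0<r_1<r_2<d(p,\partial D^{\,\prime})$ with $A\subset B(p,r_1)$; this is possible since $A$ is compact in the open set $D^{\,\prime}$. In particular $\overline{B(p,r_2)}\subset D^{\,\prime}$ is disjoint from $\partial D^{\,\prime}$. Since $f_m$ is closed it is boundary preserving (see Proposition~2.1 of~\cite{Sev$_1$}), so the cluster set $C(f_m,x_0)\subset\partial D^{\,\prime}$; hence there is a neighborhood of $x_0$ in $\overline{D}$ whose image under $f_m$ lies in $D^{\,\prime}\setminus\overline{B(p,r_2)}$. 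Using the hypothesis that the component $C$ of $\partial D$ through $x_0$ is not a singleton we pick $x_0^{\ast}\in C\setminus\{x_0\}$, and via the whole-lifting Lemma~\ref{lem9} applied to a fixed path in $D^{\,\prime}\setminus\overline{B(p,r_2)}$ joining the $f_m$-cluster sets at $x_0$ and $x_0^{\ast}$, we produce a continuum $F_m\subset D$ satisfying (i) $F_m\cap\partial U\ne\varnothing$ and $F_m\cap\partial V\ne\varnothing$ for all small $V\subset U$ of $x_0$, and (ii) $f_m(F_m)\subset D^{\,\prime}\setminus\overline{B(p,r_2)}$.

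With $A_m$ and $F_m$ at our disposal, two estimates collide. Any $\gamma\in\Gamma(A_m,F_m,D)$ is mapped by $f_m$ to a path starting in $A\subset B(p,r_1)$ and ending outside $\overline{B(p,r_2)}$, so $\gamma$ contains a subpath whose image crosses the annulus $A(p,r_1,r_2)$, whence $\Gamma(A_m,F_m,D)>\Gamma_{f_m}(p,r_1,r_2)$. Combining~(\ref{eq32*A}) with the inverse Poletsky inequality~(\ref{eq2*A}) applied with $\eta(t)=(r_2-r_1)^{-1}$ on $(r_1,r_2)$ and $\eta\equiv 0$ elsewhere yields
\[
M_{\alpha}(\Gamma(A_m,F_m,D))\leqslant M_{\alpha}(\Gamma_{f_m}(p,r_1,r_2))\leqslant \frac{\|Q_m\|_{L^1(D^{\,\prime})}}{(r_2-r_1)^{\alpha^{\,\prime}}}\leqslant \frac{M_1}{(r_2-r_1)^{\alpha^{\,\prime}}}\,.
\]
On the other hand, the weak flatness of $\partial D$ at $x_0$ provides, for each $P>0$, a neighborhood $V\subset U$ such that $M_{\alpha}(\Gamma(E,F,D))\geqslant P$ for any continua $E,F\subset D$ meeting $\partial U$ and $\partial V$; applied to $E=A_m$, $F=F_m$ this yields $M_{\alpha}(\Gamma(A_m,F_m,D))\geqslant P$ for large $m$. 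Choosing $P>M_1/(r_2-r_1)^{\alpha^{\,\prime}}$ produces the desired contradiction.

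The principal difficulty is the construction of $F_m$ in paragraph two: it must simultaneously reach into arbitrarily small neighborhoods of $x_0$ (to meet $\partial V$ for every small $V$) while keeping its $f_m$-image outside $\overline{B(p,r_2)}$ uniformly in $m$ (so the Poletsky bound applies). Here the hypothesis that no component of $\partial D$ is a singleton is decisive: it guarantees enough boundary "room" near $x_0$ for Lemma~\ref{lem9} to yield a continuum in $D$ tracking $C$ from $\partial U$ into each $V$, with $f_m$-image avoiding $\overline{B(p,r_2)}$; without this assumption the second continuum may fail to exist in the desired form.
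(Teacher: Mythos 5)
Your overall strategy matches the paper's: argue by contradiction, use $A_m$ (which has diameter $\geqslant\delta$ and creeps up to a boundary point) as one continuum in the weak-flatness inequality, build a second continuum in $D$ near that boundary point whose $f_m$-image stays a definite distance from $A$, and then collide the weak-flatness lower bound with the upper bound coming from~(\ref{eq2*A}) and the minorization~(\ref{eq32*A}). The final modulus estimates are essentially sound (the paper covers $A$ by finitely many small balls rather than using one annulus around a single $p$, but that is a minor variation).

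The genuine gap is exactly where you locate it yourself: the construction of $F_m$. Lemma~\ref{lem9} produces an $f_m$-lifting of a path with \emph{prescribed image} starting at a \emph{prescribed preimage point in $D$}; it gives no control whatsoever over where that lifting sits in $D$. A lift of a fixed path in $D^{\,\prime}\setminus\overline{B(p,r_2)}$ has no reason to enter the neighborhood $V$ of $x_0$, or even to come near the boundary component $C$, so conditions (i) on $F_m$ cannot be extracted from the lifting lemma. The paper builds the second continuum in the opposite direction: it first extends each $f_{m_k}$ continuously to $\overline{D}$ (via Theorem~\ref{th1A}) and notes that $\overline{f}_{m_k}$ is uniformly continuous on the compact set $\overline{D}$ with some modulus $\delta_k$; it then takes the whole boundary component $K_0$ through $y_0$, which is a continuum of diameter $m_0>0$ by the non-degeneracy hypothesis, chooses a connected neighborhood $U_k\cap D$ of $K_0$ inside the $\delta_k$-neighborhood of $K_0$ (using~\cite[Lemma~2(iii)]{SevSkv$_1$}), and joins interior points $z_k,\overline{y_k},w_k$ approximating $z_0,y_0,w_0\in K_0$ with $d(z_0,w_0)=m_0$ by a path $\gamma_k$ inside $U_k\cap D$. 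This gives a continuum in $D$ of diameter $\geqslant m_0/2$ containing points converging to $y_0$, and uniform continuity plus boundary preservation forces $d^{\,\prime}(f_{m_k}(|\gamma_k|),A)>\varepsilon$ with $\varepsilon<\tfrac12 d^{\,\prime}(\partial D^{\,\prime},A)$. That construction, not a lifting argument, is what makes the second continuum exist; as written, your proof does not close this step.
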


\begin{proof}
Since $\partial D\ne \varnothing,$ $d(A_m,
\partial D)$ is well-defined. Let us prove this statement by contradiction. Suppose that the
conclusion of the lemma is not true. Then for each $k\in{\Bbb N}$
there is a number $m=m_k$ such that $d(A_{m_k},
\partial D)<1/k.$ We may
assume that the sequence $m_k$ is increasing by $k.$ Since $A_{m_k}$
is compact, there are $x_k\in A_{m_k}$ і $y_k\in
\partial D$ such that $d(A_{m_k},
\partial D)=d(x_k, y_k)<1/k$ (see Figure~\ref{fig3A}).
\begin{figure}[h]
\centerline{\includegraphics[scale=0.6]{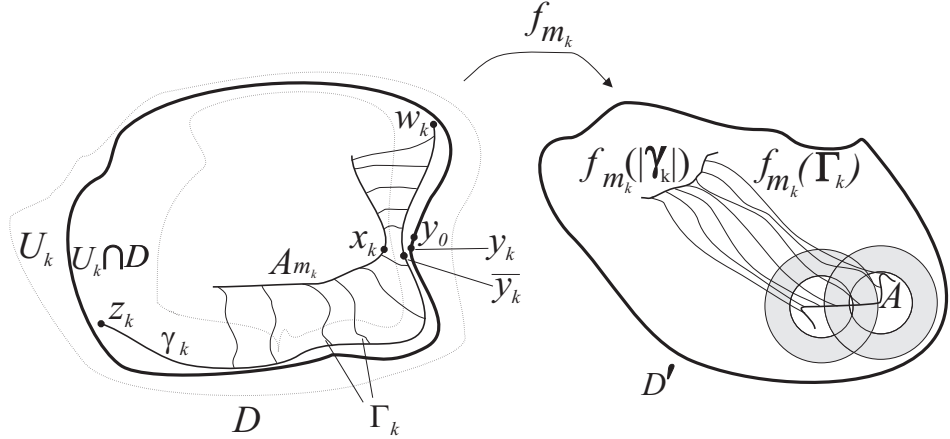}} \caption{To
proof of Lemma~\ref{lem3}}\label{fig3A}
\end{figure}
Since by the assumption $\overline{D}$ is a compact set, $\partial
D$ is a compact set, as well. Then may consider that $y_k\rightarrow
y_0\in
\partial D$ as $k\rightarrow \infty.$ Now we also have that
%
$x_k\rightarrow y_0\in \partial D$ as $k\rightarrow \infty.$
%
Let $K_0$ be a component of $\partial D$ containing $y_0.$
Obviously, $K_0$ is a continuum in $X.$ Since $\partial D$ is weakly
flat, by Theorem~\ref{th1A} $f_{m_k}$ has a continuous extension
$\overline{f}_{m_k}:\overline{D}\rightarrow
\overline{D^{\,\prime}}_P.$ It means that, for any $k=1,2,\ldots,$
any $\varepsilon>0$ and any $x_0\in \overline{D}$ there is
$\delta^{\,*}=\delta^{\,*}_k(\varepsilon, x_0)>0$ such that
\begin{equation}\label{eq1M}
m_P(f_{m_k}(x),\overline{f}_{m_k}(x_0))<\varepsilon
\end{equation}
for any $x\in D$ such that $d(x, x_0)<\delta^{\,*}.$ Let, as usual,
$I(\overline{f}_{m_k}(x_0))$ denotes the impression of the prime end
$\overline{f}_{m_k}(x_0)$ and let
$\overline{f}_{m_k}(x_0)=[E_{k_n}],$ $n=1,2,\ldots ,$ where
$E_{k_n}$ is a sequence of acceptable sets for the prime end
$\overline{f}_{m_k}(x_0).$  Using~(\ref{eq1M}) and the relation
$$E_{k_n}\subset D^{\,\prime}\cap B(I(\overline{f}_{m_k}(x_0)), r)\,, \quad r>0,\quad n=n(r, k)\in
{\Bbb N}\,,$$
see~\cite[relation~(2.2), proof of Lemma~2.1]{Sev$_1$}, we conclude
that $d^{\,\prime}(f_{m_k}(x), I(\overline{f}_{m_k}(x_0))\rightarrow
0$ as $x\rightarrow x_0.$ Now we conclude that there is (possibly,
some another) $\delta=\delta_k(\varepsilon, x_0)>0$ such that
\begin{equation}\label{eq3BA}
d^{\,\prime}(f_{m_k}(x),I(\overline{f}_{m_k}(x_0)))<\varepsilon
\end{equation}
for any $x\in D$ such that $d(x, x_0)<\delta.$ If follows
from~(\ref{eq3BA}) that any mapping $f_{m_k}$ is also continuous in
$\overline{D}$ as a mapping from $\overline{D}$ in
$\overline{D^{\,\prime}}.$ Moreover, the mapping
$\overline{f}_{m_k}$ is uniformly continuous in $\overline{D}$ for
any fixed $k,$ because $\overline{f}_{m_k}$ is continuous on the
compact set $\overline{D}.$ Now, for any $\varepsilon>0$ there is
$\delta_k=\delta_k(\varepsilon)<1/k$ such that
\begin{equation}\label{eq3BB}
d^{\,\prime}(f_{m_k}(x), I(\overline{f}_{m_k}(x_0)))<\varepsilon
\end{equation}
for all $x\in D$ and $x_0\in \overline{D}$ such that $d(x,
x_0)<\delta_k<1/k.$
Let $\varepsilon>0$ be some number such that
\begin{equation}\label{eq5D}
\varepsilon<(1/2)\cdot d^{\,\prime}(\partial D^{\,\prime}, A)\,,
\end{equation}
where $A$ is a continuum from the conditions of lemma and
$g:D_0\rightarrow D^{\,\prime}$ is a quasiconformal mapping of $D_0$
onto $D,$ while $D$ is a domain with a quasiconformal boundary
corresponding to the definition of the metric $\rho.$
Given $k\in {\Bbb N},$ we set
$$B_k:=\bigcup\limits_{x_0\in K_0}B(x_0, \delta_k)\,,\quad k\in {\Bbb
N}\,.$$
Since $B_k$ is a neighborhood of a continuum $K_0,$ by~\cite[Lemma~2
(iii)]{SevSkv$_1$} there is a neighborhood $U_k$ of $K_0$ such that
$U_k\subset B_k$ and $U_k\cap D$ is connected. We may consider that
$U_k$ is open, so that $U_k\cap D$ is linearly path connected
(see~\cite[Proposition~13.1]{MRSY}). Let $d(K_0)=m_0.$ Then we may
find $z_0, w_0\in K_0$ such that $d(K_0)=d(z_0, w_0)=m_0.$ Thus,
there are sequences $\overline{y_k}\in U_k\cap D,$ $z_k\in U_k\cap
D$ and $w_k\in U_k\cap D$ such that $z_k\rightarrow z_0,$
$\overline{y_k}\rightarrow y_0$ and $w_k\rightarrow w_0$ as
$k\rightarrow\infty.$ We may consider that
\begin{equation}\label{eq2B}
d(z_k, w_k)>m_0/2\quad \forall\,\, k\in {\Bbb N}\,.
\end{equation}
Since the set $U_k\cap D$ is linearly connected, we may joint the
points $z_k,$ $\overline{y_k}$ and $w_k$ using some path
$\gamma_k\in U_k\cap D.$ As usually, we denote by $|\gamma_k|$ the
locus of the path $\gamma_k$ in $D.$ Then $f_{m_k}(|\gamma_k|)$ is a
compact set in $D^{\,\prime}.$ If $x\in|\gamma_k|,$ then we may find
$x_0\in K_0$ such that $x\in B(x_0, \delta_k).$ Fix $\omega\in
A\subset D.$ Since $x\in|\gamma_k|$ and, in addition, $x$ is an
inner point of $D,$ we may use the notation $f_{m_k}(x)$ instead
$\overline{f}_{m_k}(x).$
By~(\ref{eq3BB}), (\ref{eq5D}) and by the triangle inequality, we
obtain that
\begin{equation}\label{eq4C}
d^{\,\prime}(f_{m_k}(x),\omega)\geqslant d^{\,\prime}(\omega,
I(\overline{f}_{m_k}(x_0)))-d^{\,\prime}(I(\overline{f}_{m_k}(x_0)),
f_{m_k}(x))\geqslant (1/2)\cdot d^{\,\prime}(\partial D^{\,\prime},
A)>\varepsilon
\end{equation}
for $k\in {\Bbb N}.$ Passing to $\inf$ in~(\ref{eq4C}) over all
$x\in |\gamma_k|$ and $\omega\in A,$ we obtain that
\begin{equation}\label{eq18}
d^{\,\prime}(f_{m_k}(|\gamma_k|), A)>\varepsilon, \quad k=1,2,\ldots
.
\end{equation}

\medskip
We cover the set $A$ with balls $B(x, \varepsilon/4),$ $x\in A.$
Since $A$ is compact, we may assume that $A\subset
\bigcup\limits_{i=1}^{M_0}B(x_i, \varepsilon/4),$ $x_i\in A,$
$i=1,2,\ldots, M_0,$ $1\leqslant M_0<\infty.$ By the definition,
$M_0$ depends only on $A,$ in particular, $M_0$ does not depend on
$k.$ Put
$$
\Gamma_k:=\Gamma(A_{m_k}, |\gamma_k|, D)\,.
$$
Let $\Gamma_{ki}:=\Gamma_{f_{m_k}}(x_i, \varepsilon/4,
\varepsilon/2),$ in other words, $\Gamma_{ki}$ consists of all paths
$\gamma:[0, 1]\rightarrow D$ such that $f_{m_k}(\gamma(0))\in S(x_i,
\varepsilon/4),$ $f_{m_k}(\gamma(1))\in S(x_i, \varepsilon/2)$ і
$\gamma(t)\in A(x_i, \varepsilon/4, \varepsilon/2)$ for $0<t<1.$ We
show that
\begin{equation}\label{eq6C}
\Gamma_k>\bigcup\limits_{i=1}^{M_0}\Gamma_{ki}\,.
\end{equation}
Indeed, let $\widetilde{\gamma}\in \Gamma_k,$ in other words,
$\widetilde{\gamma}:[0, 1]\rightarrow D,$ $\widetilde{\gamma}(0)\in
A_{m_k},$ $\widetilde{\gamma}(1)\in |\gamma_k|$ and
$\widetilde{\gamma}(t)\in D$ for $0\leqslant t\leqslant 1.$ Then
$\gamma^{\,*}:=f_{m_k}(\widetilde{\gamma})\in \Gamma(A,
f_{m_k}(|\gamma_k|), D^{\,\prime}).$ Since the balls $B(x_i,
\varepsilon/4),$ $1\leqslant i\leqslant M_0,$  form the coverage of
the compact set $A,$ we may find $i\in {\Bbb N}$ such that
$\gamma^{\,*}(0)\in B(x_i, \varepsilon/4)$ and $\gamma^{\,*}(1)\in
f_{m_k}(|\gamma_k|).$ By the relation~(\ref{eq18}),
$|\gamma^{\,*}|\cap B(x_i, \varepsilon/4)\ne\varnothing\ne
|\gamma^{\,*}|\cap (D^{\,\prime}\setminus B(x_i, \varepsilon/4)).$
Thus, by~\cite[Theorem~1.I.5.46]{Ku} there is $0<t_1<1$ such that
$\gamma^{\,*}(t_1)\in S(x_i, \varepsilon/4).$ We may assume that
$\gamma^{\,*}(t)\not\in B(x_i, \varepsilon/4)$ for $t>t_1.$ Set
$\gamma_1:=\gamma^{\,*}|_{[t_1, 1]}.$ By~(\ref{eq18}) it follows
that $|\gamma_1|\cap B(x_i, \varepsilon/2)\ne\varnothing\ne
|\gamma_1|\cap (D\setminus B(x_i, \varepsilon/2)).$ Thus,
by~\cite[Theorem~1.I.5.46]{Ku} there is $t_1<t_2<1$ such that
$\gamma^{\,*}(t_2)\in S(x_i, \varepsilon/2).$ We may assume that
$\gamma^{\,*}(t)\in B(x_i, \varepsilon/2)$ for any $t<t_2.$ Putting
$\gamma_2:=\gamma^{\,*}|_{[t_1, t_2]},$ we observe that a path
$\gamma_2$ is a subpath of $\gamma^{\,*},$ which belongs to
$\Gamma(S(x_i, \varepsilon/4), S(x_i, \varepsilon/2), A(x_i,
\varepsilon/4, \varepsilon/2)).$

Finally, $\widetilde{\gamma}$ has a subpath
$\widetilde{\gamma_2}:=\widetilde{\gamma}|_{[t_1, t_2]}$ such that
$f_{m_k}\circ\widetilde{\gamma_2}=\gamma_2,$ while
$$\gamma_2\in \Gamma(S(x_i, \varepsilon/4), S(x_i, \varepsilon/2),
A(x_i, \varepsilon/4, \varepsilon/2))\,.$$ Thus, the
relation~(\ref{eq6C}) is proved. Set
$$\eta(t)= \left\{
\begin{array}{rr}
4/\varepsilon, & t\in [\varepsilon/4, \varepsilon/2],\\
0,  &  t\not\in [\varepsilon/4, \varepsilon/2]\,.
\end{array}
\right. $$
Observe that $\eta$ satisfies the relation~(\ref{eqA2}) for
$r_1=\varepsilon/4$ and $r_2=\varepsilon/2.$ Since $f_{m_k}$
satisfies the relation~(\ref{eq2*A}), we obtain that
\begin{equation}\label{eq8C}
M_{\alpha}(\Gamma_{f_{m_k}}(x_i, \varepsilon/4,
\varepsilon/2))\leqslant
(4/\varepsilon)^{\alpha^{\,\prime}}\cdot\Vert Q\Vert_1<M_0<\infty\,.
\end{equation}
By~(\ref{eq6C}) and (\ref{eq8C}) and due to the subadditivity of the
modulus of families of paths, we obtain that
\begin{equation}\label{eq4B}
M_{\alpha}(\Gamma_k)\leqslant
\frac{4^{\alpha^{\,\prime}}M_0}{\varepsilon^{\alpha^{\,\prime}}}\int\limits_{D^{\,\prime}}Q(y)\,dm(y)\leqslant
M_1\cdot M_0<\infty\,.
\end{equation}
Arguing similarly to the proof of relations~(\ref{eq14A}) and using
the condition~(\ref{eq2B}), we obtain that
$M_{\alpha}(\Gamma_k)\rightarrow\infty$ as $k\rightarrow\infty,$
which contradicts with~(\ref{eq4B}). The resulting contradiction
proves the lemma. $\Box$
\end{proof}

\section{Equicontinuity of families of mappings fixing a point }

Given domains $D\subset X, D^{\,\prime}\subset X^{\,\prime},$ points
$a\in D,$ $b\in D^{\,\prime}$ and a number $M_0>0$ denote by ${\frak
S}_{a, b, M_0}(D, D^{\,\prime})$ the family of open discrete and
closed mappings $f$ of $D$ onto $D^{\,\prime}$ satisfying the
relation~(\ref{eq2*A}) for some $Q=Q_f,$ $\Vert
Q\Vert_{L^1(D^{\,\prime})}\leqslant M_0$ for any $y_0\in f(D),$ such
that $f(a)=b.$ The following statement was proved
in~\cite[Theorem~7.1]{SSD} in the case of a fixed function~$Q$ and
for the Euclidean space.

\medskip
\begin{theorem}\label{th5}
{\sl\, Let $D$ and $D^{\,\prime}$ be domains with finite Hausdorff
dimensions $\alpha$ and $\alpha^{\,\prime}\geqslant 2$ in spaces
$(X,d,\mu)$ and $(X^{\,\prime},d^{\,\prime}, \mu^{\,\prime}),$
respectively.  Assume that $X$ is locally connected, $\overline{D}$
is compact, $X^{\,\prime}$ is complete and supports
$\alpha^{\,\prime}$-Poincar\'{e} inequality, and that the measures
$\mu$ and $µ^{\,\prime}$ are doubling. Assume that $D$ has a weakly
flat boundary, none of the components of which degenerates into a
point, $\overline{D}$ is a compact set and $D^{\,\prime}$ be a
bounded regular domain in $X^{\,\prime},$ which is finitely
connected on the boundary. Then any $f\in {\frak S}_{a, b, M_0}(D,
D^{\,\prime})$ has a continuous extension
$\overline{f}:\overline{D}\rightarrow \overline{D^{\,\prime}}_P,$
while $\overline{f}(\overline{D})=\overline{D^{\,\prime}}_P$ and, in
addition, the family ${\frak S}_{a, b, M_0}(\overline{D},
\overline{D^{\,\prime}})$ of all extended mappings
$\overline{f}:\overline{D}\rightarrow \overline{D^{\,\prime}}_P$ is
equicontinuous in $\overline{D}.$
}
\end{theorem}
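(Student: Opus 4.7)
The continuous extension of each $f\in{\frak S}_{a,b,M_0}(D,D^{\,\prime})$ to an $\overline{f}:\overline{D}\rightarrow\overline{D^{\,\prime}}_P$ with $\overline{f}(\overline{D})=\overline{D^{\,\prime}}_P$ is an immediate application of Theorem~\ref{th1A}, every hypothesis of which is present in Theorem~\ref{th5}.

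For the equicontinuity of the family $\{\overline{f}\}_{f\in{\frak S}_{a,b,M_0}}$ in $\overline{D}$, I would argue by contradiction, using the ingredients that drove the proof of Theorem~\ref{th2} (namely Lemmas~\ref{lem1}, \ref{lem4A}, \ref{lem9} and the weakly flat boundary of $D$); the role of the condition $d(f^{\,-1}(A),\partial D)\geqslant\delta$ appearing in ${\frak S}_{\delta,A,M}$ is played here by Lemma~\ref{lem3} applied with a continuum $A$ through the fixed image $b=f(a)$. Assuming the family fails equicontinuity at some $z_0\in\overline{D}$, use the continuous extensions and compactness of $(\overline{D^{\,\prime}}_P,m_P)$ from~\cite[Theorem~10.10]{ABBS} to extract sequences $z_m,z^{\,\prime}_m\in D$ with $z_m,z^{\,\prime}_m\to z_0$ and $f_m\in{\frak S}_{a,b,M_0}$ so that $f_m(z_m)\to P_1$ and $f_m(z^{\,\prime}_m)\to P_2$ in $\overline{D^{\,\prime}}_P$ with $P_1\ne P_2$; since at most one of these prime ends can equal $b=f_m(a)$, assume without loss of generality that $P_1\ne b$. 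Choose a path $\beta:[0,1]\to D^{\,\prime}$ with $\beta(0)=b$, $d(|\beta|)>0$, $|\beta|$ at positive distance from $\partial D^{\,\prime}$ and at positive $m_P$-distance from $P_1$. By Lemma~\ref{lem9} the whole $f_m$-lifting $\beta^{(m)}$ of $\beta$ starting at $a$ exists; set $A:=|\beta|$ and $A_m:=|\beta^{(m)}|$, so that $f_m(A_m)=A$ and $a\in A_m$.

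The main obstacle is to establish the uniform diameter bound
\begin{equation*}
d(A_m)\geqslant\delta>0\qquad\text{for all }m,
\end{equation*}
which amounts to equicontinuity of $\{f_m\}$ at the fixed point $a$. My strategy is contradiction: if $d(A_{m_k})\to 0$ along a subsequence, then $A_{m_k}\subset B(a,\rho_k)$ with $\rho_k\to 0$, while $f_{m_k}(A_{m_k})=A$ keeps a fixed positive diameter. Openness of $f_{m_k}$ and the local $\alpha$-Ahlfors regularity of $(X,\mu)$ supplied by the doubling measure should then force the $\alpha$-modulus of a suitable family of paths concentrated near $a$ whose $f_{m_k}$-images cross a fixed annulus $A(b,r_1,r_2)\subset D^{\,\prime}$ to diverge as $k\to\infty$, contradicting the uniform inverse Poletsky estimate $M_\alpha(\Gamma_{f_{m_k}}(b,r_1,r_2))\leqslant (r_2-r_1)^{-\alpha^{\,\prime}}M_0$. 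The rigorous construction of this diverging modulus family in the present metric setting is the delicate point.

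Granted $d(A_m)\geqslant\delta$, Lemma~\ref{lem3} gives $d(A_m,\partial D)\geqslant\delta_1>0$ uniformly; and since every component of $f_m^{\,-1}(A)$ maps surjectively onto the connected set $A$ by openness and closedness of $f_m$, a repetition of the same argument with a different anchor point delivers a uniform diameter bound on each such component, whence $d(f_m^{\,-1}(A),\partial D)\geqslant\delta_1$. The final contradiction now proceeds exactly as in Theorem~\ref{th2}. Apply Lemma~\ref{lem1} with $x_0=b$ and $y_0\in A\setminus\{b\}$ chosen so that $\{x_0,y_0\}\cap\{P_1,P_2\}=\varnothing$ to produce disjoint paths $\gamma_{1,m},\gamma_{2,m}$ in $D^{\,\prime}$; lift these via Lemma~\ref{lem9} from $z_m,z^{\,\prime}_m$ to continua $|\gamma^{\,*}_{1,m}|,|\gamma^{\,*}_{2,m}|\subset D$ whose terminal endpoints, being preimages of points of $A$, lie at distance $\geqslant\delta_1$ from $\partial D$. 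For $z_0\in\partial D$, the weakly flat boundary at $z_0$ forces $M_\alpha(\Gamma(|\gamma^{\,*}_{1,m}|,|\gamma^{\,*}_{2,m}|,D))>N$ for any prescribed $N$, contradicting the uniform upper bound $M_\alpha(\Gamma_m)\leqslant N$ supplied by Lemma~\ref{lem4A}; for $z_0\in D$ the nondegeneracy of the components of $\partial D$ permits a boundary-adapted construction in the spirit of the proof of Lemma~\ref{lem3} (joining a continuum near a non-trivial component of $\partial D$ to a neighborhood of $z_0$) to produce the analogous contradiction.
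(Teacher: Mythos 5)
Your treatment of the extension part and the inner-point equicontinuity is fine, and your overall reduction — produce a continuum $A\ni b$ with lifting $A_m\ni a$, get $d(A_m,\partial D)\geqslant\delta_1$ uniformly, and then run the machinery of Theorem~\ref{th2} — is the same reduction the paper makes. But there is a genuine gap exactly where you flag ``the delicate point'': you assert that the argument requires the uniform lower bound $d(A_m)\geqslant\delta>0$, and your sketch for proving it (a diverging modulus of paths concentrated near $a$ whose images cross a fixed annulus about $b$) does not work as stated. The inverse Poletsky inequality~(\ref{eq2*A}) only gives an \emph{upper} bound on $M_\alpha(\Gamma_{f_{m_k}}(b,r_1,r_2))$, and that family consists of \emph{all} paths in $D$ whose images cross the annulus, not only those near $a$; nothing in the hypotheses of Theorem~\ref{th5} (note that, unlike Theorem~\ref{th4A}, it does not even assume $D$ is weakly flat as a metric space at interior points) forces the modulus of a family squeezed into a shrinking ball around $a$ to blow up. So this step is not merely unproved — it is not clear it can be proved this way, and the claimed bound $d(A_m)\geqslant\delta$ need not hold at all.

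The paper sidesteps this entirely with a dichotomy that you missed: either $d(A_m)\rightarrow 0$, or $d(A_{m_k})\geqslant\delta_0>0$ along a subsequence. In the first case no modulus estimate is needed — since every $A_m$ contains the \emph{fixed interior point} $a$, shrinking diameters force $A_m\subset B(a, d(a,\partial D)/2)$ for all large $m$, whence $d(A_m,\partial D)\geqslant d(a,\partial D)/2>0$ trivially. In the second case Lemma~\ref{lem3} gives $d(A_{m_k},\partial D)\geqslant\delta_1>0$ directly. Either way the maps land in a class ${\frak S}_{\delta_1,A,M_0}(D,D^{\,\prime})$ and Theorem~\ref{th2} yields equicontinuity at the boundary point, contradicting the assumed failure; there is no need to re-run Lemmas~\ref{lem1} and~\ref{lem4A} and the weak-flatness argument by hand, nor to treat the case $z_0\in D$ separately (that is already covered by Theorem~\ref{th4A}). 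If you replace your ``main obstacle'' paragraph by this two-case observation, the rest of your outline closes up.
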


\medskip
\begin{proof} The possibility of continuous extension of
$f\in {\frak S}_{a, b, M_0}(D, D^{\,\prime})$ to a continuous
mapping $\overline{f}:\overline{D}\rightarrow
\overline{D^{\,\prime}}_P$ is a statement of Theorem~\ref{th1A}, as
well as the equality
$\overline{f}(\overline{D})=\overline{D^{\,\prime}}_P.$ The
equicontinuity of ${\frak S}_{a, b, M_0}(D, D^{\,\prime})$ at inner
points of $D$ follows by is a result of Theorem~\ref{th4A}.

\medskip
It remains to establish the equicontinuity of the family of extended
mappings $\overline{f}:\overline{D}\rightarrow
\overline{D^{\,\prime}}_P$ at the boundary points of the domain~$D.$

\medskip
We prove this statement from the opposite. Assume that the family
${\frak S}_{a, b, M_0}(\overline{D}, \overline{D^{\,\prime}})$ is
not equicontinuous at some point $x_0\in\partial D.$ Then there are
points $x_m\in D$ and mappings $f_m\in {\frak S}_{a, b,
M_0}(\overline{D}, \overline{D^{\,\prime}}),$ $m=1,2,\ldots ,$ such
that $x_m\rightarrow x_0$ as $m\rightarrow\infty,$ moreover,
\begin{equation}\label{eq15B}
m_P(f_m(x_m), f_m(x_0))\geqslant\varepsilon_0\,,\quad
m=1,2,\ldots\,.
\end{equation}
for some $\varepsilon_0>0,$ where $m_P$ is a metric in
$\overline{D^{\,\prime}}_P$ defined in Proposition~\ref{pr2}. We
choose in an arbitrary way the point $y_0\in D^{\,\prime},$ $y_0\ne
b,$ and join it to the point $b$ by some path in $D^{\,\prime},$
which we denote by $\alpha$ (this is possible due to
Lemma~\ref{lem5}). Let $A:=|\alpha|$ and let $A_m$ be a total
$f_m$-lifting of $\alpha$ starting at $a$ (it exists by
Lemma~\ref{lem9}). Observe that $d(A_m,
\partial D)>0$ due to the closeness of~$f_m.$ Now, the following two cases are possible:
either $d(A_m)\rightarrow 0$ as $m\rightarrow\infty,$ or
$d(A_{m_k})\geqslant\delta_0>0$ as $k\rightarrow\infty$ for some
increasing sequence of numbers $m_k$ and some $\delta_0>0.$

\medskip
In the first of these cases, obviously, $d(A_m, \partial D)\geqslant
\delta>0$ for some $\delta>0.$ Then, by Theorem~\ref{th2}, the
family $\{f_m\}_{m=1}^{\infty}$ is equicontinuous at the point
$x_0,$ however, this contradicts the condition~(\ref{eq15B}).

In the second case, if $d(f(A_{m_k}))\geqslant\delta_0>0$ for
sufficiently large $k,$ we also have that $d(A_{m_k}, \partial
D)\geqslant \delta_1>0$ for some $\delta_1> 0$ by Lemma~\ref{lem3}.
Again, by Theorem~\ref{th2}, the family $\{f_{m_k}\}_{k=1}^{\infty}
$ is equicontinuous at the point $x_0,$ and this contradicts the
condition~(\ref{eq15B}).

Thus, in both of the two possible cases, we came to a
contradiction~(\ref{eq15B}), and this indicates the incorrect
assumption of the absence of the equicontinuity of the family
${\frak S}_{a, b, M_0}(D, D^{\,\prime})$ in $\overline{D}.$ The
theorem is proved.~$\Box$
\end{proof}

\section{Examples}

To construct examples of mappings that satisfy the conditions of the
main results of the article, we will use the constructions related
to publications~\cite{Sev$_2$}, \cite{Sev$_3$} and~\cite{Skv}.

\medskip
\begin{example}\label{ex1}
Let $D$ be a half-disk $D:=\{z\in {\Bbb C}:z=x+iy, |z|<1, x>0\}$ on
the complex plane ${\Bbb C }.$ Put $f_1(z)=z^2.$ In this case,
$f_1(D)$ is the punctured disk $D^{\,\prime}:={\Bbb D}\setminus I,$
$I:=\{z\in {\Bbb C}: z=x+iy, y=0, x\in [0, 1)\},$ see
Figure~\ref{fig2}.

\begin{figure}[h]
\centering\includegraphics[width=400pt]{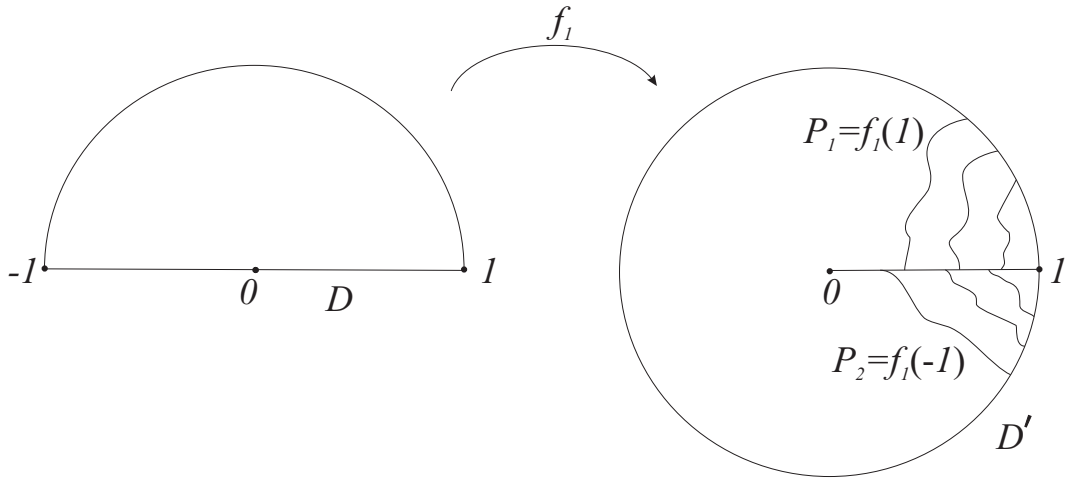} \caption{To
Example~\ref{ex1}}\label{fig2}
\end{figure}
Let $X=X^{\,\prime}={\Bbb R}^n,$ let $\mu$ and $\mu^{\,\prime}$ be
Lebesgue measures in ${\Bbb R}^n,$ and let $d(x, y)=d^{\,\prime}(x,
y)=|x-y|.$ Let us verify that for the domains $D$ and
$D^{\,\prime},$ the metric space $X=X^{\,\prime}={\Bbb R}^n,$
containing them, and also for the mapping $f_1,$ all conditions of
Theorem~\ref{th1A} are satisfied. Obviously, ${\Bbb R}^n$ is locally
connected and complete, and $\overline{D}$ is compact. Observe that
${\Bbb R}^n$ satisfies 1-Poincar\'{e} inequality (see
\cite[Theorem~10.5]{HK}) and, consequently, $n$-Poincar\'{e}
inequality. Obviously, the Lebesgue measure is doubling, in
addition, $D^{\,\prime}$ is finitely connected on the boundary.
Finally, the relation~(\ref{eq2*A}) holds with $Q\equiv 1$ because
$f_1$ is 1-quasiconformal mapping (see~\cite[Theorem~3.2]{MRV}).
Thus, all conditions of Theorem~\ref{th1A} are satisfied, so that
the mapping $f_1$ can be extended to a continuous mapping
$\overline{f_1}:\overline{D}\rightarrow \overline{D^{\,\prime}}_P.$
\end{example}

\medskip
In each subsequent example below, we retain the notation of the
previous examples, in particular, we retain the notation for the
domains $D$ and $D^{\,\prime}.$

\medskip
\begin{example}\label{ex2}
By slightly modifying Example~\ref{ex1}, we may construct a similar
mapping in the unit disk. For this purpose, we will map the unit
disk to the unit semidisk, guided by the sequential application of
the mappings $f_2(z)=-\frac{i(z-1)}{z+1}$ ($f_2:{\Bbb D}\rightarrow
{\Bbb H}^+,$ ${\Bbb H}^+:=\{z=x+iy\in {\Bbb C}, x>0\}$),
$f_3(z)=\sqrt{z}=\sqrt{r}e^{\varphi/2},$ $z=re^{i\varphi}$ ($f_3:
{\Bbb H}^+\rightarrow D_1,$ $D_1=\{z\in {\Bbb C}: z=re^{i\varphi},
0<\varphi<\pi/2\}$) and $f_4(z)=\frac{z+1}{-z+1}$
($f_4:D_1\rightarrow D$), we obtain a conformal mapping
$f_5:=f_2\circ f_3\circ f_4$ of the unit disk onto a domain $D.$
Now, the mapping $F_1:=f_1\circ f_5$ maps ${\Bbb D}$ onto
$D^{\,\prime}$ conformally, besides that, $F_1$ satisfies all the
conditions of Theorem~\ref{th1A}.
\end{example}

\medskip
\begin{example}\label{ex3}
It is fairly easy to provide a similar example of a mapping with
branching. For this purpose, we additionally set $f_6(z)=z^2,$ $z\in
{\Bbb D}.$ Put $F_2:=F_1\circ f_6.$ Now, $F_2$ is open discrete and
closed mapping of the unit disk onto a domain $D^{\,\prime}.$ Let
$\Gamma$ be a family of paths in ${\Bbb D}$ and let $M(\Gamma)$ be
the Euclidean modulus of family of paths $\Gamma$ in ${\Bbb C}.$
Now, by Theorem~\cite[Theorem~3.2]{MRV} we obtain that
$M(\Gamma)\leqslant 2\cdot M(f_6(\Gamma))=2\cdot M(F_2(\Gamma))$
because $F_2$ is a conformal mapping. Thus, the
relation~(\ref{eq2*A}) holds for $F_2$ with $Q\equiv 2.$ In
addition, by Theorem~\ref{th1A} the mapping $F_2$ can be extended to
a continuous mapping $\overline{F_2}:\overline{\Bbb D}\rightarrow
\overline{D^{\,\prime}}_P.$
\end{example}

\medskip
\begin{example}\label{ex4}
Let us indicate a similar example of a mapping with unbounded
characteristic. For this purpose,
fix a number $p\geqslant 1$ satisfying the condition $2/p<1.$ Put
$m\in {\Bbb N},$ $\alpha\in (0, 2/p).$ We define the sequence of
mappings~$f_m$ of the annulus $\{1<|z|<2\}$ onto the unit disk
${\Bbb D}$ as follows:
$$g_m(z):=\,\left
\{\begin{array}{rr} \frac{z}{|z|}(|z|-1)^{1/\alpha}\,, & 1+1/m^{\alpha}\leqslant|z|< 2, \\
\frac{(1/m)}{1+(1/m)^{\alpha}}\cdot z\,, & 0<|z|< 1+1/m^{\alpha} \ .
\end{array}\right.
$$
Note that $g_m$ satisfies  the condition~(\ref{eq2*A}) for
$Q=\frac{1+|z|^{\,\alpha}}{\alpha |z|^{\,\alpha}}$ at any $z_0\in
D^{\,\prime},$ moreover, $Q\in L^p({\Bbb D})$ (see, for example, the
reasonings obtained under the consideration
of~\cite[Proposition~6.3]{MRSY}). Putting $f_8(z)=2z,$ we observe
that the mappings
\begin{equation}\label{eq3A}
h_m:=g_m\circ f_8\circ F_2
\end{equation}
transform ${\Bbb D}$ onto $D^{\,\prime},$ in addition, $h_m$
satisfies~(\ref{eq2*A}) for $Q=2\frac{1+|z|^{\,\alpha}}{\alpha
|z|^{\,\alpha}}.$ By Theorem~\ref{th1A} the mapping $h_m$ can be
extended to a continuous mapping $\overline{h_m}:\overline{\Bbb
D}\rightarrow \overline{D^{\,\prime}}_P$ for any $m=1,2,\ldots .$

\medskip
Note that for the constructed sequence of mappings $h_m,$ $m=1,2
\ldots, $ all conditions and the conclusion of Theorem~\ref{th2} are
also satisfied. For this purpose, first of all, we note that there
are infinitely many continua $A$ satisfying condition
$d(h_m^{\,-1}(A),
\partial {\Bbb D})\geqslant~\delta,$ $m=1,2,\ldots, $ with some
$\delta> 0,$ since all the mappings $g_m$ for $m>2^{1/\alpha}$ and
$3/2<|z|<2$ are a fixed mapping by $m,$ equals to
$\frac{z}{|z|}(|z|-1)^{1/\alpha},$ besides that, the mapping
$f_8\circ F_2$ does not depend on $m.$ Observe that ${\Bbb D}$ is a
weakly flat at inner and boundary points (see~\cite[Theorem~10.12,
Theorem~17.10]{Va}, cf.~\cite[Lemma~2.2]{SevSkv$_2$}). Observe that
$D^{\,\prime}$ is regular by Riemann's mapping theorem. Thus, all
the conditions of Theorem~\ref{th2} are fulfilled, so that the
family of mappings $\overline{h_m}:\overline{\Bbb D}\rightarrow
\overline{D^{\,\prime}}_P$ is equicontinuous in $\overline{D}.$

It can be shown that the sequence of inverse mappings $h_m^{\,-1}$
have continuous extension to  $\overline{D^{\,\prime}}_P,$ but is
not an equicontinuous family in $\overline{D^{\,\prime}}_P$ (for
this purpose it is necessary to construct a prime end with an
impression at the point $0,$ and then apply similar arguments using
in the consideration of \cite[Example~1]{SevSkv$_2$}).
\end{example}

\medskip
\begin{example}\label{ex5}
Finally, consider a similar example of mappings in a metric space.
For this purpose, we essentially use our construction
from~\cite{Sev$_4$}, cf.~\cite{Skv}. Regarding this example, we will
need some definitions related to the factor space, discontinuously
acting groups of mappings, and the normal neighborhood of a point.
For these definitions, we also refer the reader to~\cite{Sev$_4$}.

\medskip
Let $G$ and $G^{\,*}$ be groups of M\"{o}bius transformations of the
unit disk ${\Bbb D}$ onto itself, acting discontinuously and not
having fixed points in ${\Bbb D}.$ Suppose also that ${\Bbb D}/G$
and  ${\Bbb D}/G^{\,*}$  are complete $2$-Ahlfors regular spaces
with $2$-Poincar\'{e} inequality. Let $\pi:{\Bbb D}\rightarrow {\Bbb
D}/G$ and $\pi_*:{\Bbb D}\rightarrow {\Bbb D}/G^{\,*}$ be the
natural projections of ${\Bbb D}$ onto ${\Bbb D}/G$ and ${\Bbb
D}/G^{\,*},$ respectively, and let $p_0\in {\Bbb D}/G$ and
$p^{\,*}_0\in {\Bbb D}/G^{\,*}$ be such that $\pi(0)=p_0$ and
$\pi_*(0)=p^{\,*}_0.$ In what follows, $h$ denotes the hyperbolic
metric in ${\Bbb D},$ $\widetilde{h}$ and $\widetilde{h}_*$ denote
the metric in ${\Bbb D}/G$ and ${\Bbb D}/G^{\,*},$ respectively,
$dv$ denotes the element of hyperbolic area in ${\Bbb D},$ and
$d\widetilde{v}$ and $d\widetilde{v}_*$ denote the elements of the
area in ${\Bbb D}/G$ and ${\Bbb D}/G^{\,*}$ (see~\cite{Sev$_4$}).
Namely,
\begin{equation}\label{eq2BC}
dv(x)=\frac{4\, dm(x)}{{(1-|x|^2)}^4}\,,
\end{equation}
\begin{equation}\label{eq3C}
h(x, y)=\log\,\frac{1+t}{1-t}\,,\quad
t=\frac{|x-y|}{\sqrt{|x-y|^2+(1-|x|^2)(1-|y|^2)}}\,,
\end{equation}
\begin{equation}\label{eq2C}
\widetilde{h}(p_1, p_2):=\inf\limits_{g_1, g_2\in G}h(g_1(z_1),
g_2(z_2))\,,
\end{equation}
\begin{equation}\label{eq5C}
\widetilde{v}(A):=v(P\cap\pi^{\,-1}(A))\,,
\end{equation}
where
%
$P=\{x\in {\Bbb D}: h(x, x_0)<h(x, T(x_0))\quad {\rm
for\,\,all}\quad T\in G\setminus\{I\}\}.$
Similarly we may define $\widetilde{h}_*$ and $\widetilde{v}_*$ in
${\Bbb D}/G^{\,*}.$ Observe that the length element $ds_h$ in the
metric space $({\Bbb D}, h)$ equals to $\frac{2|dz|}{1-|z|^2}.$

\medskip
Let $r_1>0$ be the radius of a disk $\widetilde{B}(p_0, r_1)=\{p\in
{\Bbb D}/G: \widetilde{h}(p, p_0)<r_1\}$ centered at a point~$p_0,$
lying in some normal neighborhood $U$ of $p_0$ with its closure, and
let $R_1>0$ be the radius of a disk $\widetilde{B}(p^{\,*}_0,
R_1)=\{p_*\in {\Bbb D}/G^{\,*}: \widetilde{h}_*(p_*,
p^{\,*}_0)<R_1\}$ centered at a point~$p^{\,*}_0,$ entirely (with
its closure) lying in some normal neighborhood $U_{\,*}$ of
$p^{\,*}_0.$

\medskip
Then by the definition of the natural projection $\pi,$ as well as
the definition of the hyperbolic metric~$h$ and the
metrics~$\widetilde{h}$ in~(\ref{eq2C}) we have, that $\pi(B(0,
r_0))=\widetilde{B}(p_0, r_1)$ and $\pi_*(B(0,
R_0))=\widetilde{B}(p_0, R_1),$ where $r_0:=(e^{r_1}-1)/(e^{r_1}+1)$
and $R_0:=(e^{R_1}-1)/(e^{R_1}+1).$ Let $V$ be a neighborhood of the
origin containing $B(0, r_0)$ such that $\pi$ maps $V$ onto $U$
homeomorphically, and let $V_*$ be a neighborhood of the origin
containing $B(0, R_0)$ such that $\pi_*$ maps $V_*$ onto $U_*$
homeomorphically.

\medskip
Put $f_9(z)=R_0z$ and $f_{10}(z)=\frac{z}{r_0}.$ In this case, the
family of mappings
$$\widetilde{H}_m(z)=(f_9\circ h_m\circ f_{10})(z)$$
map the ball $B(0, r_0)$ onto $B(0, R_0)\setminus J_0,$
\begin{equation}\label{eq5F}
J_0:=\{z\in {\Bbb C}: z=x+iy, x\in[0, R_0)\}\,.
\end{equation}
Recall that, by~(\ref{eq3A})
\begin{equation}\label{eq4A}
\widetilde{H}_m=f_9\circ g_m\circ f_8\circ F_1\circ f_6\circ
f_{10}\,,
\end{equation}
where
$$f_6(z)=z^2\,,\quad f_8(z)=2z\,,\quad
f_9(z)=R_0z\,, \quad f_{10}(z)=\frac{z}{r_0}\,,$$
$$g_m(z):=\,\left
\{\begin{array}{rr} \frac{z}{|z|}(|z|-1)^{1/\alpha}\,, & 1+1/m^{\alpha}\leqslant|z|< 2, \\
\frac{(1/m)}{1+(1/m)^{\alpha}}\cdot z\,, & 0<|z|< 1+1/m^{\alpha} \ .
\end{array}\right.$$
and $F_1(z)$ is some conformal mapping. Put
$$H_m=\pi_*\circ \widetilde{H}_m\circ (\pi|_V)^{\,-1}\,.$$
Observe that the $H_m$ map $\pi(B(0, r_0))\subset {\Bbb D}/G$ onto
$\pi^{\,*}(B(0, R_0))\setminus J_0\subset {\Bbb D}/G^{\,*},$ where
$J_0$ is defined in~(\ref{eq5F}), see Figure~\ref{fig7}.
\begin{figure}[h]
\center{\includegraphics[scale=0.4]{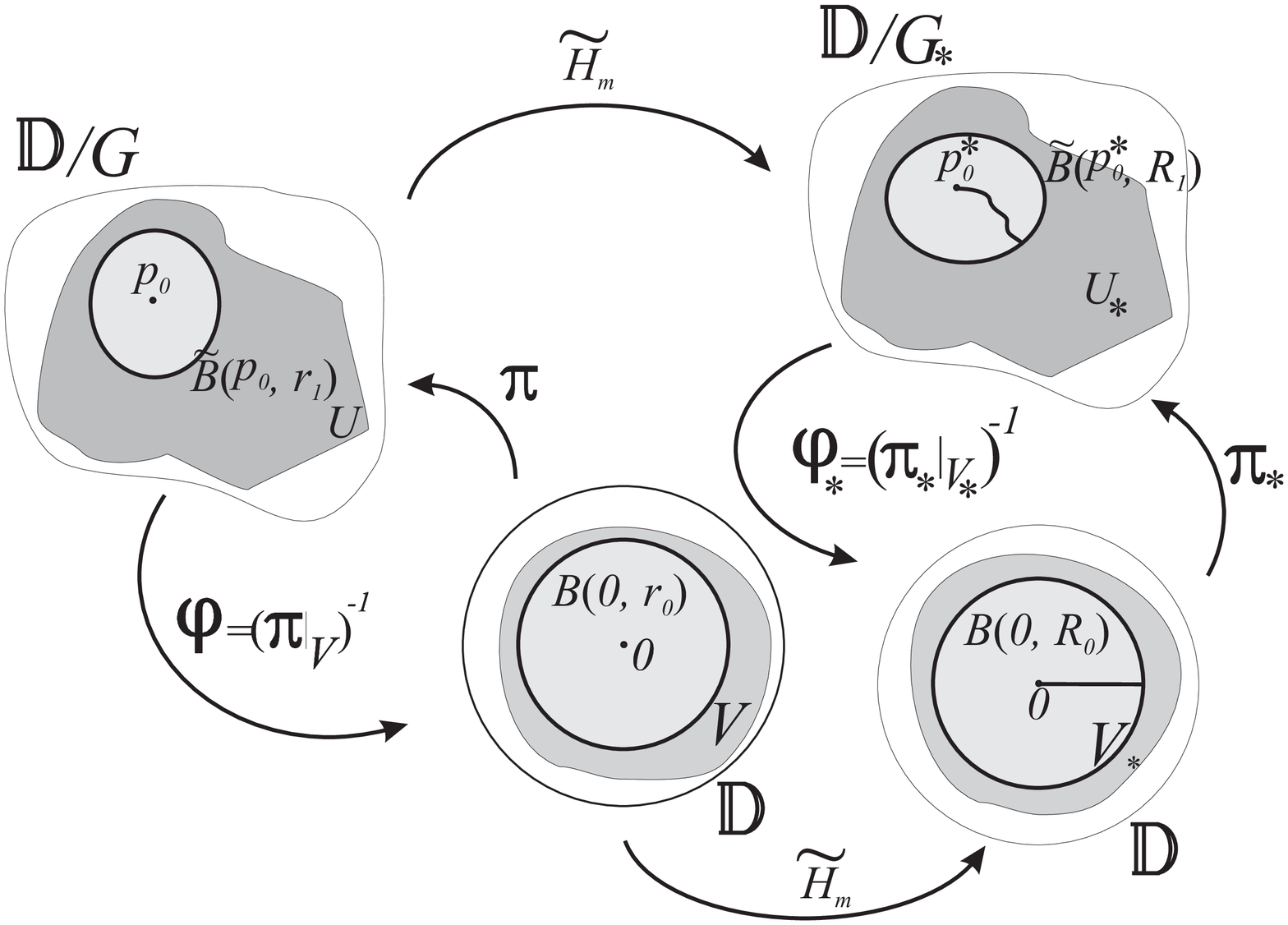}}
\caption{Example~\ref{ex5}}\label{fig7}
\end{figure}
Our immediate goal is to establish an estimate for the distortion of
the modulus of families of paths~(\ref{eq2*A}) for the mappings
$H_m,$ $m=1,2,\ldots .$ We will do this, focusing on the scheme of
consideration of~\cite[Example~4.4]{Skv}.

\medskip
In what follows, $M_h(\Gamma)$ denotes the hyperbolic modulus of
families of paths, i.e., the modulus of a path family $\Gamma$ in
${\Bbb D}$ defined by~(\ref{eq13.5}) for $p=2,$ where $d\mu$ denotes
the element of hyperbolic area $dv$ in~(\ref{eq2BC}), and the
admissibility of the function $\rho$ in~(\ref{eq13.2}) must be
understood with respect to the element of length $ds_h$
corresponding to~(\ref{eq3C}). It is known that
$M_h(\Gamma)=M(\Gamma),$ where $M(\Gamma)$ denotes the Euclidean
modulus of families of paths (see \cite[Remark~5.2]{Sev$_1$}).
Similarly, let $\widetilde{M}(\Gamma)$ denotes the modulus of family
$\Gamma$ in ${\Bbb D}/G$ (or ${\Bbb D}/G^{\,*}$). In this case, we
mean the modulus in~(\ref{eq13.5}) for $p=2,$ where $d\mu$ denotes
the element of hyperbolic area $d\widetilde{v}$ in~(\ref{eq5C}), and
the admissibility of the function $\rho$ in~(\ref{eq13.2}) must be
understood with respect to the element of length
$ds_{\widetilde{h}}$ corresponding to~(\ref{eq2C}).

\medskip
Let $\Gamma$ be a family of paths in $\widetilde{B}(p_0, r_1).$ Then
by the definition of the normal neighborhood $U$ we obtain that
$\widetilde{M}(\Gamma)=M_h(\varphi(\Gamma)),$ where
$\varphi:=(\pi|_V)^{\,-1}.$ By~\cite[Remark~5.2]{Sev$_1$},
$\widetilde{M}(\Gamma)=M(\varphi(\Gamma)).$ Further, since $f_{10}$
is conformal, $\widetilde{M}(\Gamma)=M((f_{10}\circ
\varphi)(\Gamma)).$ By~\cite[Theorem~3.2]{MRV},
$\widetilde{M}(\Gamma)\leqslant 2 M((f_6\circ f_{10}\circ
\varphi)(\Gamma)),$ where $f_6(z)=z^2.$ Since $f_8\circ F_1$ is a
conformal mapping, we also have that $\widetilde{M}(\Gamma)\leqslant
2 M((f_8\circ F_1\circ f_6\circ f_{10}\circ \varphi)(\Gamma)).$ Now,
by Example~\ref{ex4}
\begin{equation}\label{eq6D}
\widetilde{M}(\Gamma)\leqslant 2 M((f_8\circ F_1\circ f_6\circ
f_{10}\circ \varphi)(\Gamma))\leqslant\int\limits_{B(0,
R_0)}Q_*(y)\rho_*^2(y)\,dm(y)
\end{equation}
for any $\rho_*\in {\rm\,adm}(\widetilde{H}_m\circ
(\pi|_V)^{\,-1})(\Gamma),$ where
$Q_*(y)=2\frac{1+|\frac{y}{R_0}|^{\,\alpha}}{\alpha
|\frac{y}{R_0}|^{\,\alpha}}.$ Let $\rho_*\in
{\rm\,adm}_h(\widetilde{H}_m\circ (\pi|_V)^{\,-1})(\Gamma),$ that
is,
$\int\limits_{\gamma}\rho(z)\,ds_h(z)=\int\limits_{\gamma}\frac{2\rho_*(z)}{
(1-|z|^2)}|dz|\geqslant 1$ for any (locally rectifiable) path
$\gamma\in (\widetilde{H}_m\circ (\pi|_V)^{\,-1})(\Gamma).$ Then
by~(\ref{eq6D}) and~(\ref{eq2BC}) as well as by the definition of a
normal neighborhood we obtain that
\begin{equation}\label{eq7D}
\widetilde{M}(\Gamma)\leqslant 4\int\limits_{B(0,
R_0)}Q_*(y)\frac{\rho_*^2(y)}{(1-|y|^2)^2}\,dm(y)=\int\limits_{B(0,
R_0)}Q_*(y)\rho_*^2(y)\,dv(y)\,.
\end{equation}
Let $\rho_*\in {\rm\,adm} H_m(\Gamma),$ then by the definition of a
normal neighborhood $U_*$ we obtain that $\rho_*\in
{\rm\,adm}(\widetilde{H}_m\circ (\pi|_V)^{\,-1})(\Gamma).$ Now, it
follows from~(\ref{eq7D}) that
\begin{equation}\label{eq7E}
\widetilde{M}(\Gamma)\leqslant \int\limits_{\widetilde{B}(p_0^*,
R_1)}Q_1(p_*)\rho_*^2(p_*)\,d\widetilde{v}_*(p_*)\,,
\end{equation}
where
$Q_1(p_*):=2\frac{1+{|\frac{\varphi_*(p_*)}{R_0}|}^{\,\alpha}}{\alpha
{|\frac{\varphi_*(p_*)}{R_0}|}^{\,\alpha}}.$
Observe that the function $Q_1(p_*)$ is integrable in
$\widetilde{B}(p_0^*, R_1)$ because
$$\int\limits_{\widetilde{B}(p_0^*, R_1)}Q(p_*)\,d\widetilde{v}_*(p_*)
=\int\limits_{B(0,
R_0)}\frac{4(1+|\frac{z}{R_0}|^{\,\alpha})\,dm(z)}{(1-|z|^2)^2\alpha
|\frac{z}{R_0}|^{\,\alpha}}\leqslant$$$$\leqslant C\cdot
\int\limits_{\Bbb D}\frac{(1+|z|^{\,\alpha})\,dm(z)}{\alpha
|z|^{\,\alpha}}<\infty\,.$$
Thus, all the mappings $H_m,$ $m=1,2\ldots $ satisfy the
condition~(\ref{eq7E}) and, consequently, the relation~(\ref{eq2*A})
with some general integrable function $Q_1=Q_1(p_*).$

\medskip
Let us check that all conditions of Theorems~\ref{th1A}
and~\ref{th2} are satisfied for the family of mappings $H_m,$
$m=1,2,\ldots .$ Observe that ${\Bbb D}/G$ is locally connected (see
e.g.~\cite[Proposition~1.1]{Sev$_4$},
cf.~\cite[Proposition~3.14]{Ap}). By the construction, $H_m$ act
between domains $\widetilde{B}(p_0, r_1)$ and
$\widetilde{B}(p^{\,*}_0, R_1)\setminus \pi_*(J_0)$ with compact
closures, in addition, all $H_m$ are discrete, open and closed. By
the assumption, the spaces ${\Bbb D}/G$ and ${\Bbb D}/G^{\,*}$ are
complete $2$-Ahlfors regular spaces with $2$-Poincar\'{e}
inequality. In particular, the measures $\mu:=\widetilde{v}$ and
$\mu^{\,\prime}:=\widetilde{v}_*$ are doubling. Obviously,
$\widetilde{B}(p^{\,*}_0, R_1)\setminus \pi_*(J_0)$ is bounded and,
in addition, finitely connected on the boundary because $B(0,
R_0)\setminus J_0$ is that. All of the conditions of
Theorem~\ref{th1A} are fulfilled, thus, any $H_m,$ $m=1,2,\ldots ,$
has a continuous extension
$\overline{H_m}:\overline{\widetilde{B}(p_0,
r_1)}\rightarrow\overline{\widetilde{B}(p^{\,*}_0, R_1)\setminus
\pi_*(J_0)}_P$ such that
$\overline{H_m}(\overline{\widetilde{B}(p_0,
r_1)})=\overline{\widetilde{B}(p^{\,*}_0, R_1)\setminus
\pi_*(J_0)}_P.$

\medskip
Note that for the constructed sequence of mappings $H_m,$ $m=1,2
\ldots, $ satisfies and the conclusion of Theorem~\ref{th2} , as
well. Indeed, observe that there are infinitely many continua $A$
satisfying condition $d(H_m^{\,-1}(A),
\partial {\Bbb D})\geqslant~\delta,$ $m=1,2,\ldots, $ with some
$\delta> 0,$ since all the mappings $g_m$ in~(\ref{eq4A}) are a
fixed mapping by $m$ for $m>2^{1/\alpha}$ and $3/2<|z|<2,$ and the
remaining mappings that make up $H_m,$ are fixed and do not depend
on the index $m=1,2,\ldots. $ Observe that $\widetilde{B}(p_0, r_1)$
is a weakly flat at inner and boundary points, because
$\widetilde{B}(p_0, r_1)$ is hyperbolic isometric to the disk $B(0,
r_0)$ and, in addition, $B(0, r_0)$ is weakly flat with the respect
to hyperbolic metric and hyperbolic measure because the hyperbolic
and Euclidean metrics are equivalent on compact sets in ${\Bbb D},$
and $B(0, r_0)$ is weakly flat with the respect to Euclidean metric
and Lebesgue measure by~\cite[Theorem~10.12, Theorem~17.10]{Va},
cf.~\cite[Lemma~2.2]{SevSkv$_2$}. Observe that
$\widetilde{B}(p^{\,*}_0, R_1)\setminus \pi_*(J_0)$ is regular by
Riemann's mapping theorem. Thus, all the conditions of
Theorem~\ref{th2} are fulfilled, so that the family of mappings
$\overline{H_m}:\overline{\widetilde{B}(p_0,
r_1)}\rightarrow\overline{\widetilde{B}(p^{\,*}_0, R_1)\setminus
\pi_*(J_0)}_P$ is equicontinuous in $\overline{\widetilde{B}(p_0,
r_1)}.$
\end{example}

\medskip
{\bf \noindent Evgeny Sevost'yanov} \\
{\bf 1.} Zhytomyr Ivan Franko State University,  \\
40 Bol'shaya Berdichevskaya Str., 10 008  Zhytomyr, UKRAINE \\
{\bf 2.} Institute of Applied Mathematics and Mechanics\\
of NAS of Ukraine, \\
1 Dobrovol'skogo Str., 84 100 Slavyansk,  UKRAINE\\
esevostyanov2009@gmail.com

\end{document}